\newtheorem{thm}{Theorem}[section]
\newtheorem{lema}[thm]{Lemma}
\newtheorem{cor}[thm]{Corollary}
\newtheorem{prop}[thm]{Proposition}
\theoremstyle{definition}
\newtheorem*{defi}{Definition}
\newtheorem*{rmk}{Remark}
\newtheorem{ques}{Question}
\newcommand{\D}{\mathbb{D}}
\newcommand{\T}{\mathbb{T}}
\newcommand{\R}{\mathbb{R}}
\newcommand{\Z}{\mathbb{Z}}
\newcommand{\N}{\mathbb{N}}
\newcommand{\C}{\mathbb{C}}
\newcommand{\EC}{\widehat{\mathbb{C}}}
\newcommand{\MA}{\mathcal{A}}
\newcommand{\A}{\mathbb{A}}
\newcommand{\MV}{\mathcal{V}}
\newcommand{\Mod}{\textup{mod}}
\newcommand{\ii}{\textup{i}}
\makeatletter\@addtoreset{equation}{section}\makeatother
\begin{document}

\author{YOUMING WANG}
\address{Department of Applied Mathematics, Hunan Agricultural University, Changsha, 410128, P. R. China}
\email{wangym2002@163.com}

\author{FEI YANG}
\address{Department of Mathematics, Nanjing University, Nanjing, 210093, P. R. China}
\email{yangfei@nju.edu.cn}

%---------------------------------------------------------------------------------------------------------------
\title[Buried Julia components]{JULIA SETS AS BURIED JULIA COMPONENTS}

\begin{abstract}
Let $f$ be a rational map with degree $d\geq 2$ whose Julia set is connected but not equal to the whole Riemann sphere. It is proved that there exists a rational map $g$ such that $g$ contains a buried Julia component on which the dynamics is quasiconformally conjugate to that of $f$ on the Julia set if and only if $f$ does not have parabolic basins and Siegel disks. If such $g$ exists, then the degree can be chosen such that $\deg(g)\leq 7d-2$. In particular, if $f$ is a polynomial, then $g$ can be chosen such that $\deg(g)\leq 4d+4$. Moreover, some quartic and cubic rational maps whose Julia sets contain buried Jordan curves are also constructed.
\end{abstract}

% AMS subject classifications (used in AMS journals)
\subjclass[2010]{Primary: 37F45; Secondary: 37F10}

% AMS keywords (used in AMS journals)
%\keywords{Julia sets; buried component; singularly perturbations}

% today's date, or fill in whatever date you prefer
\date{\today}

% acknowledge support, etc
% \thanks{This research was partially supported by NSF grant DOA-123456789.}
% \thanks{We would like to thank our colleagues for their helpful criticism.}

% dedication
% \dedicatory{Dedicated to Professor Donald Knuth on the occasion of his $100$th birthday}

\maketitle

%----------------------------------------------------------------------------------------------------------------
%\vskip1.0cm
\tableofcontents
%----------------------------------------------------------------------------------------------------------------

%----------------------------------------------------------------------------------------------------------------
\section{Introduction}

Let $f:\EC\to\EC$ be a rational map with degree at least two. The \textit{Julia set} $J(f)$ of $f$ is the set of the points which fail to be normal in the sense of Montel. Or equivalently, $J(f)$ is the closure of the repelling periodic points of $f$. The complement of $J(f)$ is the \textit{Fatou set} of $f$ which we denote by $F(f)$. Each connected component of $J(f)$ (resp. $F(f)$) is called a \textit{Julia} (resp. \textit{Fatou}) \textit{component}. It was known that the Julia components can exhibit several kinds of shapes: the singletons, Jordan curves and some other complex topologies.

A Julia component (or a point on the Julia set) is called \textit{buried} provided it is disjoint with the boundary of any Fatou component. In particular, buried Julia components cannot occur in the polynomials since the Julia set coincides with the boundary of the unbounded Fatou component. The first example of buried Julia component was constructed by McMullen in \cite{McM88}. Consider the family of rational maps which is given by
\begin{equation*}
f_{c,\lambda}(z)=z^\ell+c+\frac{\lambda}{z^m}, \text{ where } \ell\geq 2,m\geq 1 \text{ and } c,\lambda\in\C.
\end{equation*}
McMullen proved that if $c=0$, $1/\ell+1/m<1$ and $\lambda\in\C\setminus\{0\}$ is small enough, then $J(f_{c,\lambda})$ is a Cantor set of circles and contains infinitely many buried Julia components which are Jordan curves (see also \cite{DLU05}). In particular, the same phenomenon occurs if $c\neq 0$ is small (see \cite{XQY14}). In \cite{PT00}, Pilgrim and Tan studied an example which is slightly different from $f_{c,\lambda}$:
\begin{equation*}
\widetilde{f}_{-1,\lambda}(z)=\frac{1}{(\frac{1}{z})^2-1}+\frac{\lambda}{z^3}=\frac{z^2}{1-z^2}+\frac{\lambda}{z^3}, \text{ where } \lambda\in\C.
\end{equation*}
They proved that if $\lambda\neq 0$ is small enough, then $J(\widetilde{f}_{-1,\lambda})$ contains a Julia component which is homeomorphic to the Julia of $z\mapsto z^2-1$. Moreover, the Julia components of $J(\widetilde{f}_{-1,\lambda})$ which are not eventually periodic are buried Jordan curves.

Let $f$ be a rational map with degree at least two. Beardon proved that the Julia set $J(f)$ has buried components if it is disconnected and every component of the Fatou set $F(f)$ has finite connectivity \cite{Bea91b}. Then Qiao proved that $J(f)$ has buried components is equivalent to it is disconnected and $F(f)$ has no completely invariant component \cite{Qia95}. Although the existence of buried Julia components which are singletons was known very early, the first specific example was given in \cite{Qia95} by applying Beardon's criterion. He showed that the Julia set of Herman's example contains some buried components which are singletons.

In 2008 a specific example in \cite{BDGR08} shows that if $c\neq 0$ is the center of a hyperbolic component of the Multibrot set $M_q$, then $J(f_{c,\lambda})$ contains both Jordan curves and singletons which are buried Julia components, where $q=\ell=m\geq 3$ and $\lambda\neq 0$ is small enough (see also \cite{GMR13}).

At the time when McMullen gave the first example of buried Julia components \cite[p.\,55]{McM88}, he also raised the following question:
\begin{ques}\label{ques-1}
Does there exist a rational map with degree less than $5$ having a buried Julia component which is not a singleton?
\end{ques}
In 2015, Godillon constructed a family of \textit{cubic} rational maps
\begin{equation}\label{equ-Godillon}
F_\lambda(z)=\frac{(1-\lambda)[(1-4\lambda+6\lambda^2-\lambda^3)z-2\lambda^3]}{(z-1)^2[(1-\lambda-\lambda^2)z-2\lambda^2(1-\lambda)]}, \text{ where }\lambda\in\C,
\end{equation}
and proved that $J(F_\lambda)$ contains a buried Julia component which is neither a Jordan curve nor a singleton if $\lambda\neq 0$ is small enough \cite{God15}. In particular, his buried Julia component is homeomorphic to the Julia set of $z\mapsto 1/(z-1)^2$, which has a super-attracting periodic orbit $0\mapsto 1\mapsto \infty\mapsto 0$ with period $3$. According to \cite{Yin92} and \cite[Lemma 8.2]{Mil93}, the Julia set of any quadratic rational map is either connected or a Cantor set. This implies that Godillon's example is optimal in terms of degree since the rational maps with degree at most two cannot contain any buried Julia components.

For the buried \textit{points} in the Julia sets of rational maps, the problem of existence has not been solved completely. Makienko conjectured that the Julia set of a rational map $f$ has buried points if and only if there is no completely invariant component of the Fatou set of $f^{\circ 2}$ (see \cite{Mak87}). One can refer to \cite{Mor97}, \cite{Qia97}, \cite{Mor00}, \cite{SY03}, \cite{CMMR09}, \cite{CMT13} and the references therein for the progress.

\subsection{Statement of the main results}

In \cite[Theorem 3.4]{McM88}, McMullen proved the following result: Let $J_0$ be a Julia component of a rational map $f$ (other than a single point) such that $f(J_0)=J_0$. Then there exists another rational map $g$ such that $f:J_0\to J_0$ is quasiconformally conjugate to $g:J(g)\to J(g)$. By quasiconformally conjugate of $f|_{J_0}$ and $g|_{J(g)}$ we mean that there exists a quasiconformal mapping $\varphi:\EC\to\EC$ such that $\varphi(J_0)=J(g)$ and the restriction of $\varphi$ in a small open neighborhood of $J_0$ is a conjugacy of $f$ and $g$. This implies that the dynamics of $f$ can be decomposed into some small pieces. Actually, in complex dynamics, how to decompose a dynamical systems is an important problem.

\medskip

In this article, we are interested in the inverse procedure of McMullen:
\begin{ques}
Could any connected Julia set appear as a buried Julia component of a higher degree rational map?
\end{ques}
As a complete answer to this question, we prove the following result.

\begin{thm}\label{thm-main}
Let $f:\EC\to\EC$ be a rational map with degree at least two whose Julia set $J(f)\neq\EC$ is connected. Then there is a rational map $g$ such that $g$ has a buried Julia component on which $g$ is quasiconformally conjugate to $f$ on $J(f)$ if and only if $f$ does not have parabolic basins and Siegel disks.

If such $g$ exists, then the degree of $g$ can be chosen such that $\deg(g)\leq 7d-2$, where $d=\deg(f)$. In particular, $\deg(g)\leq 4d+4$ if $f$ is a polynomial.
\end{thm}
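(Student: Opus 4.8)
The plan is to prove the two implications separately. The forward (necessity) implication is the softer one and I would extract it from general properties of buried components; the reverse (sufficiency) implication is a quasiconformal surgery, and is where the bound on $\deg(g)$ is earned.

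\textbf{Necessity.} Assume $g$ has a buried Julia component $J_0$ and $\phi$ is (the restriction of) a quasiconformal homeomorphism between neighbourhoods of $J(f)$ and $J_0$ with $\phi\circ f=g\circ\phi$ on $J(f)$ and $\phi(J(f))=J_0$. Suppose first $f$ has a parabolic cycle; after replacing $f,g$ by a common iterate I may take a parabolic fixed point $\zeta\in J(f)$ of multiplier $1$ with, say, $k\geq 1$ petals. Then $\zeta':=\phi(\zeta)$ is a fixed point of $g$ in $J_0$, hence repelling, parabolic, or Cremer. It is not Cremer, because ``being an accumulation point of periodic orbits'' is a conjugacy invariant on the Julia set while parabolic points are not such accumulation points. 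It is not repelling either, because near $\zeta$ the number of iterates needed to escape a fixed small disc grows like $|z-\zeta|^{-k}$, whereas near a repelling fixed point it grows only like $\log\!\big(1/|z-\zeta'|\big)$; matching these escape times along points of $J(f)$ accumulating at $\zeta$ and then applying the H\"older continuity of $\phi^{-1}$ would force $|z-\zeta|\lesssim\exp\!\big(-c\,|z-\zeta|^{-k}\big)$ for $z\in J(f)$ arbitrarily close to $\zeta$, which is absurd. Thus $\zeta'$ is parabolic, hence lies on the boundary of its parabolic basin, contradicting that $J_0$ is buried. Suppose next $f$ has a Siegel disc $\Delta$ of period $p$; a standard elliptic-linearisation argument shows that $\partial\Delta$ contains no periodic point and carries irrational combinatorial rotation number for $f^{\circ p}$. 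The component $W$ of $\EC\setminus\phi(\partial\Delta)$ matched with $\Delta$ by the equivariance of $\phi$ is simply connected and satisfies $g^{\circ p}(W)=W$, so $g^{\circ p}|_W$ is a proper holomorphic self-map of a disc; its degree is not $\geq 2$, since that would produce periodic points of $g$ on $\partial W=\phi(\partial\Delta)\subseteq J_0$, so it is an automorphism, and the irrational boundary rotation number forces it to be elliptic. Then $W$ is a Siegel disc of $g$ whose boundary lies in $J_0$, again contradicting that $J_0$ is buried.

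\textbf{Sufficiency.} Assume $f$ has neither parabolic basins nor Siegel discs. Because $J(f)$ is connected there are no Herman rings either, so every Fatou component of $f$ is simply connected, every periodic Fatou component is an attracting basin, and there are at most $2d-2$ cyclic classes of Fatou components. I would keep $f$ unchanged on a neighbourhood of $J(f)$ and, on the Fatou set, replace its dynamics by that of a ``Cantor set of copies'' configuration, so that the original $J(f)$ gets surrounded --- on the side of \emph{every} Fatou component of $f$ --- by this Cantor structure, and hence becomes disjoint from the boundary of every Fatou component of the new map. Concretely, for a polynomial I would use the B\"ottcher coordinate of the basin of $\infty$ to graft in a McMullen-type model (keeping $\infty$ as a super-attracting sink, introducing a trap door, and undisturbed near $J(f)$), carry out the analogous local modification inside one immediate basin of each bounded attracting cycle, and route every new trap door into a common sink; a single such local modification in one component of a cyclic class is propagated over the whole grand orbit by the dynamics, so it suffices to do it once per class. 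For a general rational $f$ I would first perform a preliminary surgery producing a convenient completely invariant super-attracting basin --- this extra step is what makes the general bound larger --- and then argue as above. In all cases one obtains a quasiregular map $G$ coinciding with $f$ near $J(f)$, with bounded dilatation supported away from the forward orbit of $J(f)$; by the measurable Riemann mapping theorem (Shishikura's surgery principle) there is a quasiconformal $\psi$ with $g:=\psi\circ G\circ\psi^{-1}$ rational, and $J_0:=\psi(J(f))$ is then a buried Julia component of $g$ on which $g$ is quasiconformally conjugate, via $\psi|_{J(f)}$, to $f$ on $J(f)$. Finally, $\deg(g)$ equals $d$ plus a uniformly bounded contribution for the basin of $\infty$ and for each of the $\leq 2d-2$ bounded attracting cycles, plus the overhead of the preliminary surgery in the general case; optimising the model maps yields $\deg(g)\leq 4d+4$ when $f$ is a polynomial and $\deg(g)\leq 7d-2$ in general.

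\textbf{Main obstacle.} The crux lies in the sufficiency direction: one must guarantee that, after straightening, the grafted copy $J_0$ of $J(f)$ is \emph{genuinely} buried --- i.e. that no Fatou component of $g$ abuts it --- which requires inserting a Cantor-of-copies structure in front of every Fatou component of $f$ while spending only $O(1)$ in degree per cyclic class, and (for non-hyperbolic $f$, e.g. with Cremer points) carrying this out by an honest surgery rather than a naive perturbation of $f$. Controlling this trade-off is exactly what both makes the construction work and pins down the linear bound on $\deg(g)$.
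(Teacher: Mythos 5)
Your necessity argument contains a genuine gap at the Cremer exclusion. ``Being an accumulation point of periodic orbits'' does not separate parabolic from Cremer points: repelling periodic points are dense in the perfect set $J(f)$, so \emph{every} point of $J(f)$, parabolic points included, is an accumulation point of periodic orbits, and the invariant you propose is vacuous. If you instead mean the small-cycles property (whole periodic orbits inside arbitrarily small neighbourhoods), then parabolic points indeed have no small cycles, but to conclude that $\zeta'$ is not Cremer you would need every Cremer point of a rational map to possess small cycles, which is not known in general; so that step fails. The paper bypasses this by quoting the theorem of \cite{Nai82} and \cite{Per97} that the multiplier at an indifferent periodic point is a topological conjugacy invariant, so the image point is parabolic and lies on the boundary of its parabolic basin, contradicting buriedness. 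In the Siegel case you assert $g^{\circ p}(W)=W$ for the complementary component $W$ ``matched with $\Delta$ by equivariance'', but the conjugacy is only given on $J(f)$, so there is no equivariance off the Julia set; from $\partial\bigl(g^{\circ p}(W)\bigr)\subset g^{\circ p}(\partial W)\subset\phi(\partial\Delta)$ one only gets a dichotomy, and both the invariance of $W$ and your exclusion of degree at least two need arguments you do not supply. (The paper reads the hypothesis as a conjugacy on a neighbourhood of $J(f)$ and transports an invariant annulus of the Siegel disk directly into a rotation domain of $g$, as at the end of \S\ref{sec-second-pert}.)

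For sufficiency your overall plan (graft McMullen-type structure into one immediate basin per attracting cycle, keep $f$ near $J(f)$, straighten by Shishikura's lemma, and pull back separating Julia components so that they accumulate on the copy of $J(f)$) is indeed the paper's plan, but the sketch omits the one point on which everything hinges: how to produce, \emph{inside the basin containing $\infty$}, a Julia component of the new map separating the copy of $J(f)$ from $\infty$. If you keep $\infty$ super-attracting, the only graft available there is an annulus-to-disk branched cover whose image contains the copy of $J(f)$, and the preimage of that copy in the annulus need not be essential; it separates only if the critical values are placed on $J(f)$ in the manner of Lemma \ref{lema:ann-to-disk-std}(d)--(f), which is exactly what the paper's dendrite construction in \S\ref{sec-dendrite} is for. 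In all other cases the paper does \emph{not} keep $\infty$ fixed: it routes $\infty$ into a bounded Fatou component (Lemma \ref{lema:cover-disk-to-disk-2}, creating the super-attracting $2$-cycle $0\leftrightarrow\infty$), so that an annulus around $\infty$ maps over an annulus containing the copy of $J(f)$ and its preimage is automatically essential. Your proposal contains neither device and does not treat the dendrite case at all, although that case is precisely what fixes the constants: $4d+4$ (and $7d-2$ at $d=2$) comes from the dendrite bound $\deg(g)=d+2m=d+10$, while the general bound comes from $\deg(g)=d+\ell+\sum_i m_i$ with $m_i=\ell=3$ and at most $2d-2$ cycles. It does not come from any ``preliminary surgery producing a completely invariant super-attracting basin''; no such surgery is performed in the paper, nor could it be done without altering $J(f)$. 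As written, your degree bounds are asserted rather than derived.
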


Theorem \ref{thm-main} implies that a rational map can contain buried Julia components which are ``almost" arbitrary. Indeed, the assumptions in Theorem \ref{thm-main} allow the presence of critical points on the buried Julia components. The upper bound of the degree of $g$ is not sharp in general. In fact, if $J(f)$ is not a dendrite (see \S\S\ref{sec-second-pert}, \ref{sec-one-orbit}), then $\deg(g)$ can be chosen at most $7d-6$ (and at most $4d$ if $f$ is a polynomial).

The proof of Theorem \ref{thm-main} is based on successive perturbations and quasiconformal surgery. See Figure \ref{Fig_J-buried} for an example, which illustrates the process of perturbations\footnote{Note that in Figure \ref{Fig_J-buried}, the degree of the last rational map is $12$. However, by the proof of a special case of Theorem \ref{thm-main} in \S\ref{sec-second-pert}, there should exist a rational map with degree $8$ whose Julia set contains a buried copy of basilica. This kind of rational maps exist indeed. But in order to obtain the buried Julia component, the second parameter $\mu$ there should be chosen extremely small. In this case we cannot obtain a good picture.} of the map $z\mapsto z^2-1$.

\begin{figure}[!htpb]
  \setlength{\unitlength}{1mm}
  \centering
  \includegraphics[width=110mm]{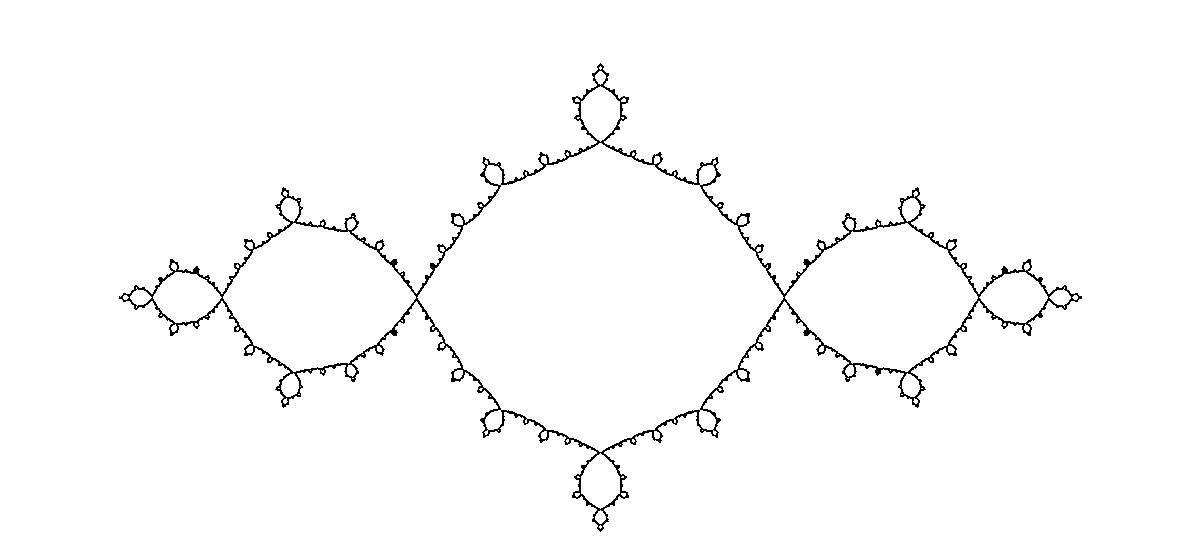}\vskip0.2cm
  \includegraphics[width=110mm]{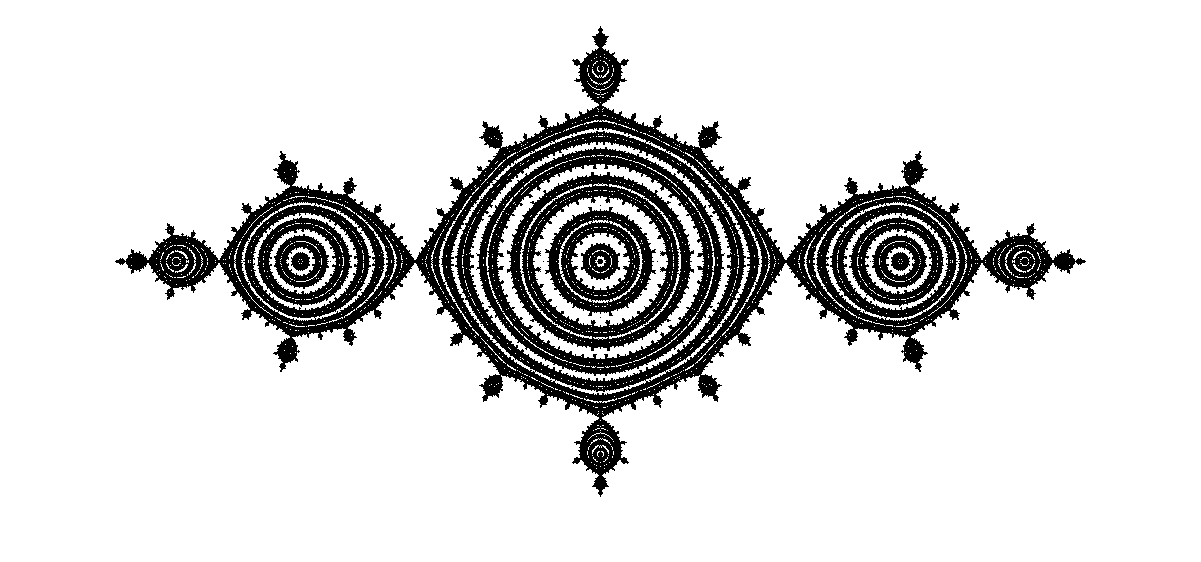}\vskip0.2cm
  \includegraphics[width=110mm]{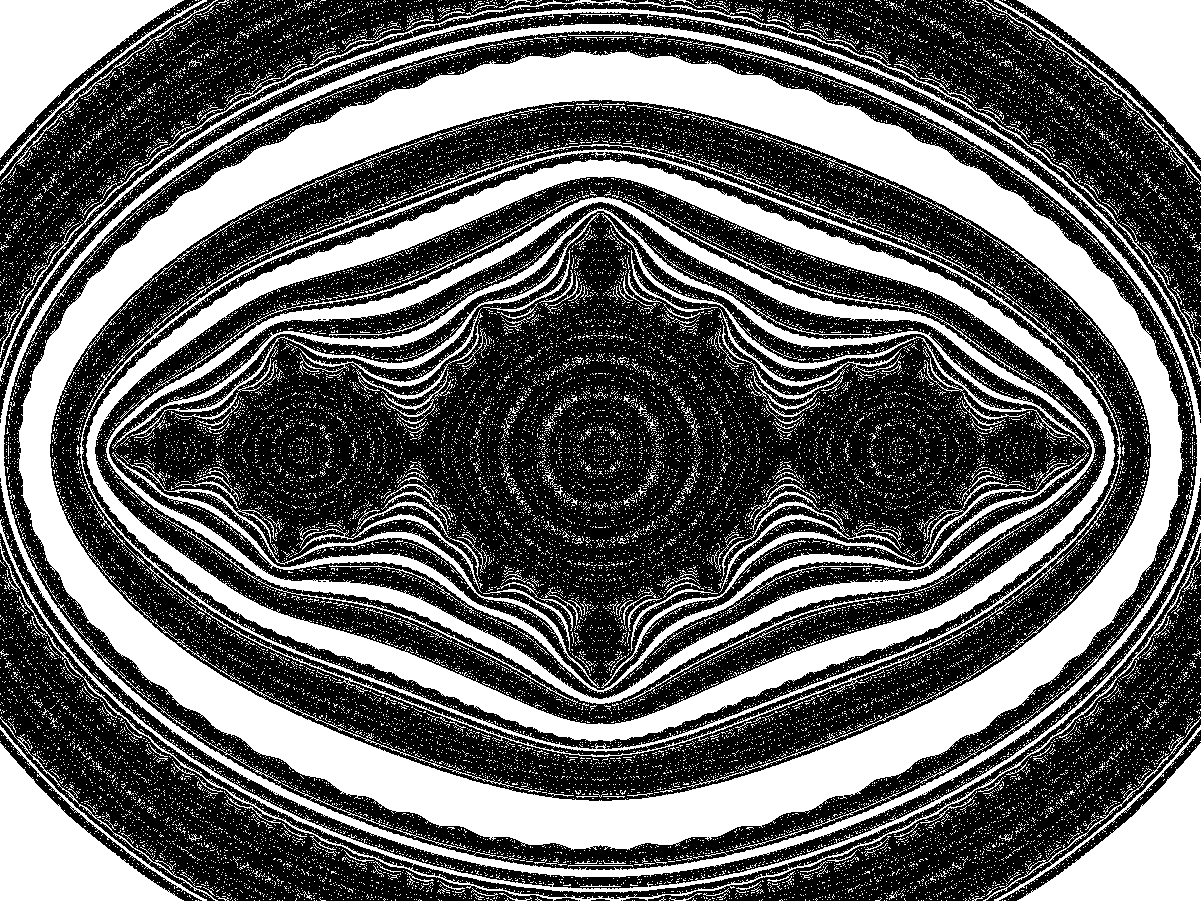}
  \caption[]{The Julia sets of \\
  \begin{minipage}{\linewidth}
  \begin{equation*}
    z\mapsto z^2-1, ~z\mapsto z^2-1+\frac{\lambda}{z^3} \text{ and } z\mapsto 1/\Big(\big(z^2-1+\frac{\lambda}{z^3}\big)^{-1}+\mu z^7\Big)
  \end{equation*}
  \end{minipage}
  (from the top down), where $\lambda=10^{-10}$ and $\mu=10^{-14}$. The middle and bottom Julia sets, respectively, contain a semi-buried and buried component which are homeomorphic to the top Julia set: the basilica.}
  \label{Fig_J-buried}
\end{figure}

A parameter $c\in\C$ (resp. $b\in\C$) is called the \textit{center} of a hyperbolic component of the Mandelbrot set (resp. Multibrot set $M_q$) if the critical point $0$ is a periodic point with period $p\geq 1$ of $P_c(z)=z^2+c$ (resp. $z\mapsto z^q+b$ with $q\geq 3$). In this article, we consider the singular perturbation of this kind of unicritical polynomials and prove the following result.

\begin{thm}\label{thm-exam}
Let $c\neq 0$ (resp. $b\neq 0$) be the center of a hyperbolic component of the Mandelbrot set (resp. Multibrot set $M_q$ with $q\geq 3$). If $\lambda\neq 0$ (resp. $\mu\neq 0$) is small enough, then the Julia sets of
\begin{equation}\label{equ-quartic}
f_\lambda(z)=z^2+c+\frac{\lambda}{(z-c)^2} \quad\text{and}\quad g_\mu(z)=z^q+b+\frac{\mu}{z-b}
\end{equation}
contain infinitely many buried Julia components which are Jordan curves.
\end{thm}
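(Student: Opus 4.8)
The plan is to locate the buried Jordan curves inside a \emph{Cantor set of circles} generated by a first--return map of $f_\lambda$ (resp.\ $g_\mu$) near the perturbed critical point, in the spirit of the singular--perturbation technology of McMullen \cite{McM88}, Devaney--Look--Uminsky \cite{DLU05}, Blanchard--Devaney--Garijo--Russell \cite{BDGR08} and Xiao--Qiu--Yin \cite{XQY14}. The one new feature is that the pole is placed at the critical \emph{value} $c=P_c(0)$ (resp.\ $b=P_b(0)$, where $P_b(z)=z^q+b$) rather than at the critical point, and this is exactly what makes the exponents of the relevant model maps satisfy McMullen's inequality. First I would record the coarse picture: since $\deg f_\lambda=4$ and $f_\lambda(z)=z^2+O(1)$ near $\infty$, the point $\infty$ is superattracting of local degree $2$, with immediate basin $B_\infty$; as $c$ is a double pole, the Fatou component $\mathcal{T}\ni c$ is a trap door, mapped onto $B_\infty$ with degree $2$ and of spherical size $\asymp|\lambda|^{1/2}$. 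The finite critical points solve $z(z-c)^3=\lambda$: three of them cluster at $c$, and one, $c_\lambda$, tends to $0$ with $f_\lambda(c_\lambda)-c=\lambda/c^2+O(\lambda^2)$, which is far smaller than the radius of $\mathcal{T}$, so $f_\lambda^{\circ 2}(c_\lambda)\in B_\infty$; the three critical points near $c$ first land near $P_c^{\circ 2}(0)$ after two iterates, then shadow the superattracting cycle of $P_c$ until they too approach $0$ and fall into $\mathcal{T}$. Thus for $|\lambda|$ small $f_\lambda$ is hyperbolic with $J(f_\lambda)=\partial(\text{basin of }\infty)$, and this basin is not completely invariant, its full preimage containing $\mathcal{T}\ne B_\infty$. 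The same discussion applies to $g_\mu$, where $\deg g_\mu=q+1$, $\infty$ is superattracting of local degree $q$, and $b$ is a simple pole.

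The heart of the matter is the first--return map near $0$. Let $p\ge 2$ be the period of the orbit $0\mapsto c\mapsto P_c^{\circ 2}(0)\mapsto\cdots\mapsto 0$ of $P_c$ (one has $p\ge 2$ precisely because $c\ne 0$). Expanding the first two iterates of $f_\lambda$ at a point $z$ near $0$ gives $f_\lambda(z)-c=z^2+O(\lambda)$ and then
\[
f_\lambda^{\circ 2}(z)-P_c^{\circ 2}(0)=2c\,z^2+\frac{\lambda}{z^4}+(\text{lower order}),
\]
so on the annular range $|\lambda|^{1/4}\lesssim|z|\lesssim|\lambda|^{1/6}$ the term $\lambda/z^4$ dominates, for larger $|z|$ the term $z^2$ dominates, and for $|z|\lesssim|\lambda|^{1/4}$ one has $f_\lambda(z)\in\mathcal{T}$; the remaining $p-2$ iterates shadow $P_c$, carry no critical point, and only multiply the displacement by a bounded non--zero factor. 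Hence the first--return map $R:=f_\lambda^{\circ p}$, restricted to a suitable annular neighbourhood $\mathcal{A}$ of $0$, is modelled on the McMullen map $z\mapsto\alpha z^2+\beta\lambda/z^4$ with $\alpha,\beta\ne 0$, together with a trap door $\mathcal{T}_0\approx\{|z|\lesssim|\lambda|^{1/4}\}$ on which $f_\lambda$ maps $2$--to--$1$ onto $\mathcal{T}$ (hence into $B_\infty$). Because the exponents $(\ell,m)=(2,4)$ satisfy $1/\ell+1/m=3/4<1$, which is McMullen's condition for the model's Julia set to be a Cantor set of circles, one checks that $R^{-1}(\mathcal{A})\cap\mathcal{A}$ consists of two sub--annuli, each separating the two boundary circles of $\mathcal{A}$ and mapped onto $\mathcal{A}$ with degree $\ge 2$, and that every critical value of $R$ in $\mathcal{A}$ escapes into $B_\infty$. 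For $g_\mu$ the same computation, with a simple pole, yields the model $z\mapsto\alpha z^q+\beta\mu/z^q$, i.e.\ exponents $(q,q)$, and the condition $2/q<1$ is exactly why one must take $q\ge 3$.

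Given this, McMullen's standard argument applied to $R|_{\mathcal{A}}$ shows that $\bigcap_{n\ge 0}R^{-n}(\overline{\mathcal{A}})$, which coincides with $J(f_\lambda)\cap\mathcal{A}$, is a Cantor set of circles; in particular $J(f_\lambda)$ is disconnected. Each such circle is a connected component of $J(f_\lambda)$, the complementary gaps inside $\mathcal{A}$ are annular Fatou components, and although the circles lying at endpoints of complementary intervals of the underlying Cantor set bound such a gap, those lying at non--endpoints are disjoint from the closure of every Fatou component: these are buried Jordan curve components of $J(f_\lambda)$, as claimed. (Alternatively, once $J(f_\lambda)$ is disconnected and carries no completely invariant Fatou component, the theorems of Beardon \cite{Bea91b} and Qiao \cite{Qia95} already force buried components, but the Cantor--set--of--circles picture is what guarantees that some of them are Jordan curves.) The identical reasoning gives the conclusion for $g_\mu$.

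I expect the main obstacle to be the second paragraph: one must control $R=f_\lambda^{\circ p}$ near $0$ \emph{uniformly in the small parameter} $\lambda$ --- pinning down the correct fundamental annulus $\mathcal{A}$, proving that $R^{-1}(\mathcal{A})\cap\mathcal{A}$ is exactly two sub--annuli each covering $\mathcal{A}$ with degree at least two, and verifying that the critical values of $R$ inside $\mathcal{A}$ land in the trap door $\mathcal{T}_0$ --- so that the nested intersection is a genuine Cantor set of circles rather than a more complicated invariant set. The novelty relative to the classical examples is precisely the bookkeeping of the successive local models along the orbit $0\mapsto c\mapsto P_c^{\circ 2}(0)\mapsto\cdots$, the pole now sitting one step downstream of the critical point.
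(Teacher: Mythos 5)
Your overall strategy is the same as the paper's: perturb at the critical value, verify that all critical orbits escape so that $f_\lambda$ (resp.\ $g_\mu$) is hyperbolic (the paper's Lemma \ref{lema:hyper}), and extract the buried Jordan curves from a return-map annulus on which $f_\lambda^{\circ p}$ acts as two essential unbranched coverings of degrees $2$ and $4$ (resp.\ $q$ and $q$); the paper places this annulus around $c$, one iterate downstream of your base point $0$, namely $V=A(\partial T_\lambda,h_\lambda(\partial T_0))$ with sub-annuli $V_1,V_2$. However, the step you rest the proof on --- a McMullen-type fundamental annulus $\mathcal{A}$ around $0$ such that $R^{-1}(\mathcal{A})\cap\mathcal{A}$ is two sub-annuli each mapped \emph{onto} $\mathcal{A}$, with nested preimages forming a Cantor set of circles equal to $J(f_\lambda)\cap\mathcal{A}$ --- fails as stated. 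In the classical Cantor-circle situation both boundary curves of the fundamental annulus lie in attracting regions and map out of the annulus; here only the inner boundary (the trap door) does. Near the outer edge the return map $R=f_\lambda^{\circ p}$ is a perturbation of the superattracting return map of $P_c$ on the Fatou component containing $0$, which moves points \emph{inward}. Hence no curve lying strictly inside the persistent continuum (the perturbed boundary of that component) can map out of $\mathcal{A}$: with a choice such as $\mathcal{A}=\{|\lambda|^{1/4}\lesssim|z|\lesssim\varepsilon\}$, the outer component of $R^{-1}(\mathcal{A})\cap\mathcal{A}$ reaches $\partial^{+}\mathcal{A}$ and its image is a proper essential sub-annulus of $\mathcal{A}$ (outer scale $\asymp\varepsilon^{2}$), not all of $\mathcal{A}$. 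The only way to obtain the degree-$2$ covering onto the whole annulus is to take the outer boundary to be the invariant continuum surviving from $J(P_c)$, and its existence and invariance must be proved; the paper supplies exactly this via a holomorphic motion of $J(P_c)$ over the parameter disk, using that $f_\lambda$ is hyperbolic with a single attracting cycle for every small $\lambda\neq0$ (Lemma \ref{holo-motion}). Your sketch never constructs this invariant boundary, and this is precisely the point you flag as the main obstacle.

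Two corrections then follow. First, with the correct configuration the invariant set $\bigcap_{n\geq0}R^{-n}(\overline{\mathcal{A}})$ is \emph{not} a Cantor set of circles and does not coincide with $J(f_\lambda)\cap\mathcal{A}$: the component of constant outer itinerary is (part of) the homeomorphic copy of $J(P_c)$, and components with eventually-outer itineraries accumulate on it; only the components whose itineraries remain interior are the curves you want. Second, that these interior components are Jordan curves is not a direct quotation of McMullen's argument (which exploits both complementary disks of the fundamental annulus lying in attracting basins); the paper deduces it from hyperbolicity, local connectivity of the Julia components (Tan--Yin \cite{TY96}) and Pilgrim--Tan's classification of components of disconnected Julia sets \cite{PT00}, and burial then follows because such a component is accumulated by Julia components from both complementary sides, so it cannot meet the boundary of any Fatou component. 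It also helps to identify the gap between your two sub-annuli with the annular Fatou component containing the nearby critical points (the paper's $A_\lambda$, shown to be an annulus by Riemann--Hurwitz), which is what guarantees the middle gap lies entirely in the Fatou set. With these ingredients added --- the holomorphic motion providing the invariant outer boundary, and the Pilgrim--Tan step --- your outline becomes essentially the paper's proof, transported from $c$ to $0$.
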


This answers Question \ref{ques-1} of McMullen proposed in \cite{McM88} by simple examples. See Figure \ref{Fig_J-buried-2} for two specific examples. A similar singular perturbation of $z\mapsto z^q+b$ with the form $z\mapsto z^q+b+\mu/z^q$ was considered in \cite{BDGR08}, where $b\neq 0$ is the center of a hyperbolic component of the Multibrot set $M_q$ with $q\geq 3$. However, the perturbation there is made at the \textit{critical point} $0$. In order to obtain the buried Julia components, that perturbation makes the degree of the resulting rational map at least $6$. Here our perturbation is made at the \textit{critical value}. Hence the degree of the perturbed rational map can be reduced to $4$ (if $q=3$).

On the other hand, a singular perturbation of $z\mapsto z^2+c$ with the form $z\mapsto z^2+c+\lambda/z^2$ was considered in \cite{Mar08}, where $c\neq 0$ is the center of a hyperbolic component of the Mandelbrot set. Similarly, the perturbation is made also at the critical point $0$. The degree of the resulting rational map is $4$ but this map has connected Julia set and hence cannot contain any buried Julia components. See also \cite{GMR13} for a singular perturbation of $z\mapsto z^2+c$ on the corresponding bounded super-attracting cycle by adding one pole to each point in the cycle.

\begin{figure}[!htpb]
  \setlength{\unitlength}{1mm}
  \centering
  \includegraphics[height=50mm]{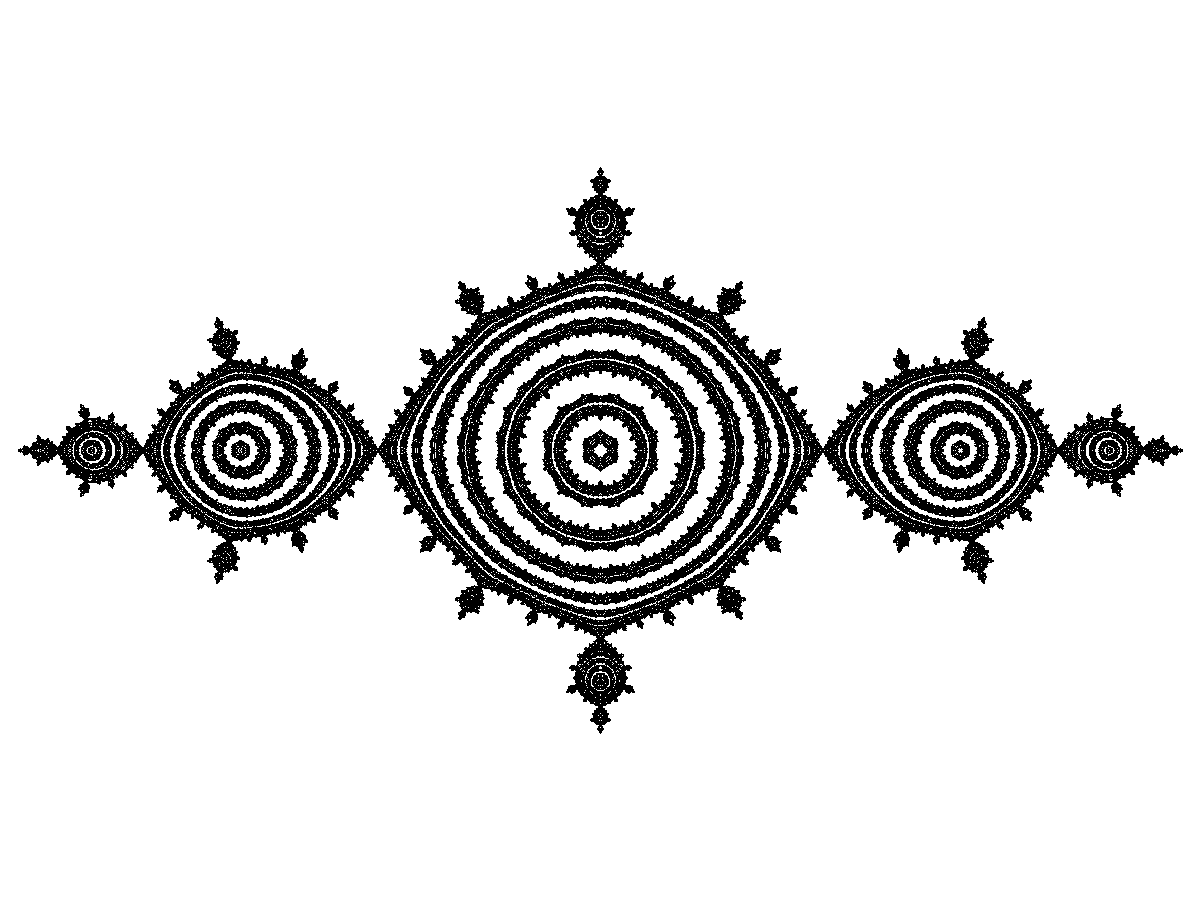}\hskip0.1cm
  \includegraphics[height=50mm]{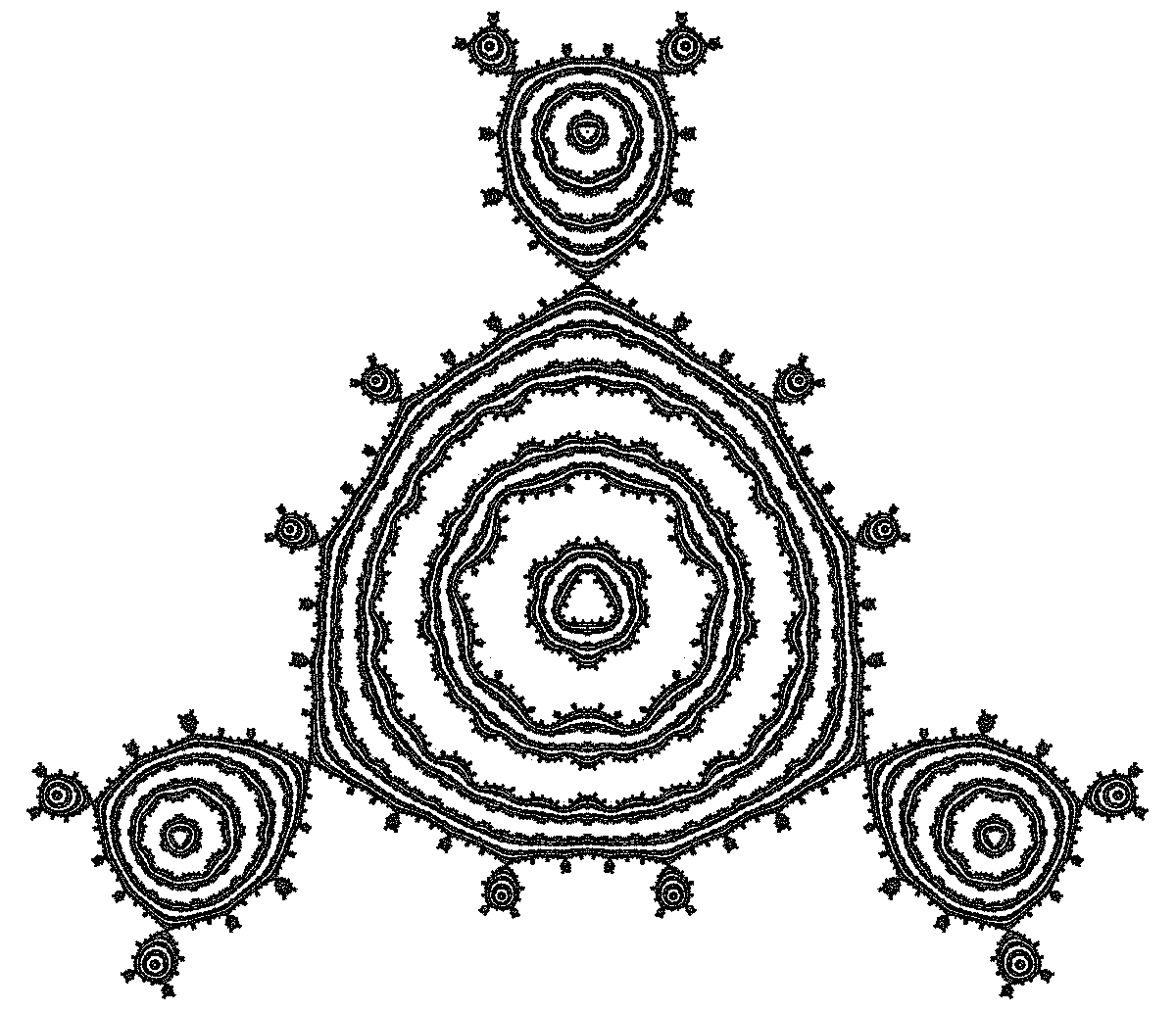}
  \caption[]{The Julia sets of two quartic rational maps \\
  \begin{minipage}{\linewidth}
  \begin{equation*}
    f_\lambda(z)=z^2-1+\frac{\lambda}{(z+1)^2} \text{\quad and\quad} g_\mu(z)=z^3+\ii+\frac{\mu}{z-\ii},
  \end{equation*}
  \end{minipage}
  where $\lambda=10^{-7}$ and $\mu=10^{-4}$. Both of these two Julia sets contain infinitely many buried Julia components which are Jordan curves.}
  \label{Fig_J-buried-2}
\end{figure}

Godillon's example in \eqref{equ-Godillon} is a cubic rational map but the construction depends heavily on the complicated combinatorics which are encoded by a weighted dynamical tree. Here we adopt the idea in the proof of Theorem \ref{thm-main} and the construction of Theorem \ref{thm-exam}, to give a simple example of cubic rational map such that the corresponding Julia set contains a buried Julia component which is neither a Jordan curve nor a singleton.

\begin{thm}\label{thm-exam-2}
Let $a\neq -(3+\sqrt{5})/2$ be the center of a hyperbolic component of
\begin{equation*}
Q_a(z)=1+a/z^2, \quad\text{where } a\in\C\setminus\{0\}.
\end{equation*}
If $\nu\neq 0$ is small enough, then the Julia set of the cubic rational map
\begin{equation}\label{equ-cubic}
f_\nu(z)=1+\frac{a}{z^2}+\frac{(1+a+\sqrt{-a})\,\nu}{z-1-\nu}
\end{equation}
contains a buried Julia component which is homeomorphic to $J(Q_a)$ and infinitely many buried Julia components which are Jordan curves.
\end{thm}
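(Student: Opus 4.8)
The plan is to treat $f_\nu$ as a singular perturbation of the cubic map $Q_a(z)=1+a/z^2$ at its critical value, exactly parallel to the strategy used for $g_\mu$ in Theorem \ref{thm-exam}, but now starting from a map whose postcritical behaviour produces a buried component more complicated than a Jordan curve. First I would analyze the unperturbed dynamics of $Q_a$: it is a cubic rational map with critical points at $0$ and $\infty$, the point $\infty$ being fixed, and $0\mapsto\infty$, so $Q_a$ has a single free critical orbit issuing from the critical value $1=Q_a(0)$. When $a$ is the center of a hyperbolic component, that free critical point lands on a super-attracting cycle $\mathcal C$ of some period $p$; I would record that the Fatou set of $Q_a$ consists of the immediate basin of $\infty$ together with the basin of $\mathcal C$, that $J(Q_a)$ is connected (this is why the exclusion $a\neq 0$ and $a\neq-(3+\sqrt5)/2$ is imposed — these are the two values forcing a different configuration, e.g. where the two critical orbits collide or $1$ is itself on the $\infty$-orbit), and that $J(Q_a)$ is generally neither a Jordan curve nor a singleton.

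Next I would set up the perturbation. Writing $f_\nu(z)=Q_a(z)+\dfrac{(1+a+\sqrt{-a})\,\nu}{z-1-\nu}$, the added pole is located at $z=1+\nu$, which tends to the critical value $1$ as $\nu\to 0$; the coefficient $1+a+\sqrt{-a}$ is chosen so that $f_\nu$ maps the critical value of the "old" critical point near $0$ onto (or close to) itself in a controlled way — i.e.\ so that near $\nu=0$ the map restricted to a neighbourhood of the old critical orbit is a small perturbation of $Q_a$ while a tiny new invariant structure is created near the pole. Concretely, for $\nu$ small $f_\nu$ has degree $3$, picks up a simple pole near $1$, and on a fixed large annular region away from $1$ it converges uniformly to $Q_a$. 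I would then argue, by the standard transversality / normal-families perturbation argument (as in McMullen \cite{McM88} and Devaney–Garijo–Marotta–Russell \cite{DGMR}, and as used for Theorem \ref{thm-exam}), that $f_\nu$ has a "McMullen-type" structure: there is a trap-door Fatou component $T_\nu$ containing the preimage of the immediate basin-of-$\infty$-piece cut off by the new pole, a copy $U_\nu$ of the main Fatou region on which the dynamics is close to $Q_a$, and the part of the Julia set separating $T_\nu$ from the rest is, for $\nu\neq 0$ small, quasiconformally conjugate to $J(Q_a)$ via the standard straightening of the "polynomial-like along the old critical orbit" restriction of $f_\nu$. This produces a Julia component homeomorphic to $J(Q_a)$.

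Then I would show this component is \emph{buried}: by Qiao's criterion \cite{Qia95} it suffices to check that $J(f_\nu)$ is disconnected and $F(f_\nu)$ has no completely invariant component. Disconnectedness follows from the existence of the trap-door $T_\nu$ whose boundary separates the sphere; no completely invariant Fatou component exists because the immediate basin of $\infty$ has been "perforated" by the pole at $1+\nu$ so that its preimage has extra components (this is precisely the McMullen mechanism), hence $\infty$'s basin is not completely invariant, and the super-attracting cycle $\mathcal C$ persists with $p\geq 1$ components mapped cyclically, which is completely invariant only if $p=1$, a case I would exclude or handle separately. The same argument, applied to the Fatou components that are Jordan-curve boundaries among the countably many preimages of $T_\nu$, shows that the non-eventually-periodic preimage components of the $J(Q_a)$-copy are buried Jordan curves, exactly as in Pilgrim–Tan \cite{PT00}. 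Finally, I would verify $\deg f_\nu=3$: $Q_a$ contributes degree $2$ wait — $Q_a$ already has degree $2$? No: $Q_a(z)=1+a/z^2$ has degree $2$; adding a simple pole raises the degree to $3$, so $f_\nu$ is cubic as claimed, and the picture in Figure \ref{Fig_J-buried-2} (right, for the analogous $g_\mu$) and the cubic example are consistent.

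The main obstacle I anticipate is two-fold: (i) pinning down the algebraic normalization — proving that the specific coefficient $(1+a+\sqrt{-a})$ and pole location $1+\nu$ are exactly what is needed so that the "old" critical orbit's local model is a genuine perturbation of $Q_a$ (this requires a careful expansion of $f_\nu$ near $z=1$ and near the old critical point, matching it to $Q_a$ to leading order, and is the delicate computational heart of the argument); and (ii) the quasiconformal surgery producing the conjugacy on the buried component — one must build a polynomial-like (or more precisely a suitable restriction whose filled part carries $J(Q_a)$) structure for $f_\nu$, with moduli bounded below uniformly in $\nu$, so that straightening yields a map in the same hyperbolic component as $Q_a$, hence (since $a$ is a center, so the hyperbolic component is a single point in moduli space up to conjugacy) a map conformally — and in particular quasiconformally — conjugate to $Q_a$ on its Julia set. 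Controlling these moduli uniformly as $\nu\to 0$, and ruling out that the surgery disturbs the burial, is where the real work lies; everything else is a transcription of the arguments already developed for Theorems \ref{thm-main} and \ref{thm-exam}.
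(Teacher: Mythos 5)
Your starting picture of the unperturbed map is wrong, and the error propagates into your burial argument. $Q_a(z)=1+a/z^2$ is quadratic and $\infty$ is \emph{not} fixed: the unique critical orbit is $0\mapsto\infty\mapsto 1\mapsto 1+a\mapsto\cdots$, so when $a$ is a center both critical points lie in a single super-attracting cycle of period $p\geq 3$; there is no separate immediate basin of $\infty$, and likewise $f_\nu(\infty)=1$, so there is no basin of infinity to perforate and no McMullen-type trap-door configuration in this family. (Also, the excluded value $a=-(3+\sqrt{5})/2$ has nothing to do with colliding critical orbits: at that parameter $1+a+\sqrt{-a}=0$, so the perturbing term vanishes and $f_\nu=Q_a$ fails to be cubic.) The mechanism actually used is the one of Lemma \ref{lema:cover-disk-to-disk} and Proposition \ref{prop-buried-p-1}: the pole is placed at the critical value $1$, which lies \emph{in} the cycle, and the coefficient is chosen so that $f_\nu(1)=-\sqrt{-a}$ (a $Q_a$-preimage of $0$), which for small $\nu$ creates an attracting cycle of Fatou components $U_\nu^1\to U_\nu^{-1}\to U_\nu^0\to U_\nu^\infty\to U_\nu^1$ of period $4$ together with an annular component $A_\nu$ containing two critical points (Lemma \ref{lema:hyper-2}); the copy of $J(Q_a)$ inside $J(f_\nu)$ is obtained not by a polynomial-like straightening (which does not literally apply to $Q_a$, whose critical orbit passes through $\infty$) but by a holomorphic motion of the repelling cycles and the $\lambda$-lemma, as in Lemma \ref{holo-motion}.

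The decisive gap is the burial step. Qiao's criterion only asserts that \emph{some} buried Julia components exist once $J$ is disconnected and no Fatou component is completely invariant; it cannot tell you that the distinguished component $h_\nu(J(Q_a))$ is buried. The paper's own quartic example defeats your inference: for $f_\lambda$ of Theorem \ref{thm-exam} the Julia set is disconnected and no Fatou component is completely invariant, yet the copy of $J(P_c)$ is only \emph{semi}-buried, since it equals $\partial B_\lambda$ (Proposition \ref{prop-homeo-11}). To bury the copy one must show that from \emph{every} complementary component there are Julia components of $f_\nu$ accumulating on it, so that it meets no Fatou component boundary. That is what the paper does: with the annuli $V_1=A(\partial^+A_\nu,h_\nu(\partial U_0^1))$, $V_2=A(\partial^-A_\nu,\partial U_\nu^1)$, $V=A(\partial U_\nu^1,h_\nu(\partial U_0^1))$ and the degree-$4$ coverings $f_\nu^{\circ p}:V_1\to V$ and $f_\nu^{\circ p}:V_2\to V$, one builds nested sequences of Julia and Fatou components converging to $h_\nu(\partial U_0^1)$ and then pulls them back under $f_\nu$ into every complementary piece; the crucial point, absent from your proposal, is that because the perturbation sits at a point of the cycle through $0$ and $\infty$, \emph{both} sides of the copy acquire such separating sequences, which is precisely what distinguishes the fully buried cubic case from the semi-buried quartic one. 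The buried Jordan curves then come from the itinerary sets in $V$ and the Pilgrim--Tan criterion, as you indicate, but this too rests on the hyperbolicity and component structure of Lemma \ref{lema:hyper-2}, not on Qiao's criterion.
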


This gives also an answer to the question of McMullen in terms of different cubic rational maps from Godillon's. See Figure \ref{Fig_J-buried-3} for a specific example.
The family $Q_a(z)=1+a/z^2$, where $a\in\C\setminus\{0\}$ was first studied by Lyubich in \cite[\S 2.4]{Lyu86}. Note that $Q_a$ has two critical points $0$, $\infty$ and exactly one critical orbit
\begin{equation*}
0\overset{(2)}{\longmapsto}\infty\overset{(2)}{\longmapsto}1\overset{(1)}{\longmapsto} 1+a \longmapsto\cdots
\quad\text{and}\quad \pm\sqrt{-a}\overset{(1)}{\longmapsto}0.
\end{equation*}
Lyubich asked whether there exist $a\neq 0$ such that $Q_a$ has a Herman ring. In 1987 Shishikura gave a negative answer to this question \cite{Shi87} and later another proof was provided by Bam\'{o}n and Bobenrieth \cite{BB99} (see also \cite{Yan17}).

\begin{figure}[!htpb]
  \setlength{\unitlength}{1mm}
  \centering
  \includegraphics[width=115mm]{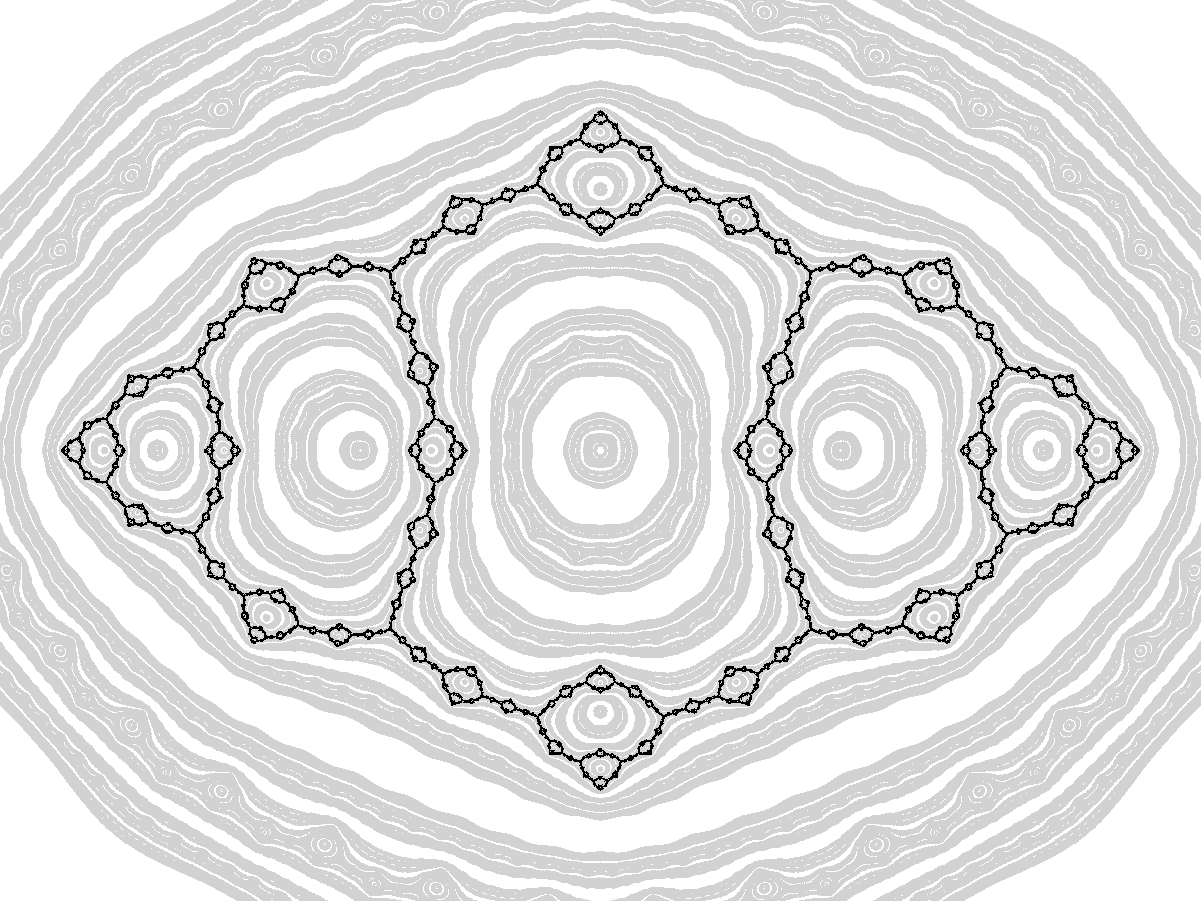}
  \caption{The Julia set of cubic rational map $f_\nu(z)=1-1/z^2+\nu/(z-1-\nu)$, where $\nu=10^{-5}$. This Julia set contains infinitely many buried Julia components which are Jordan curves (included in gray parts) and contains also a buried Julia component which is homeomorphic to the Julia set of $z\mapsto 1-1/z^2$ (the black part).}
  \label{Fig_J-buried-3}
\end{figure}

In this article we adopt the convention that $\arg\sqrt{z}\in(-\frac{\pi}{2},\frac{\pi}{2}]$ for $z\in\C\setminus\{0\}$. In Theorem \ref{thm-exam-2} we exclude $a=-(3+\sqrt{5})/2$ since this is equivalent to $1+a+\sqrt{-a}=0$ and in this case we would not obtain a singular perturbation.

Let us compare our example with Godillon's in \eqref{equ-Godillon}. His formula is a bit complicated but can be written as family with only one free critical point hence one can draw the bifurcation locus in the complex plane. Our formula is simpler but there are several free critical points.

\subsection{The idea of the proofs}

Let us give a sketch of the proof of Theorem \ref{thm-main}. For the sufficiency, suppose that $f$ has a connected Julia set and that it does not have parabolic basins and Siegel disks. A standard quasiconformal surgery guarantees that one can assume that all periodic Fatou components of $f$ are super-attracting and $f$ is post-critically finite in the Fatou set (see \cite[p.\,106]{CG93}). The proof will be divided into two main steps: The first one is to perturb all super-attracting periodic orbits (except one) and obtain a quasiregular map $F$ (if $f$ has at least two disjoint super-attracting periodic orbits). Then this map $F$ is conjugate to a rational map $h$ by quasiconformal surgery principle. The second step is to perturb the remaining super-attracting periodic orbit and obtain a quasiregular map $H$ which can be conjugated to a rational map $g$  (if $f$ has exactly one super-attracting periodic orbit we omit the first step). We will prove that the Julia set of the first rational map $h$ has a ``semi-buried" Julia component. Then we show that the second rational map $g$ contains a fully buried Julia component which is a copy of $J(f)$.

For the necessity in Theorem \ref{thm-main} we use a proof of contradiction. Since topological conjugacy preserves the multiplier at indifferent periodic points, one can exclude the existence of parabolic periodic points in the buried Julia components. If $f$ has a Siegel disk $U$ and let $\varphi:J(f)\to J_0(g)$ be a quasiconformal conjugacy defined from a neighborhood of $J(f)$ to that of a Julia component $J_0(g)$ of $g$. It is easy to see that $U$ contains an annular neighborhood $A$ such that $\varphi(A\cap U)$ is contained in a rotation domain of $g$. However, this is impossible if $\varphi(J(f))$ is a buried Julia component of $g$. For details, see \S\ref{sec-second-pert}.

As stated above, the constructions in Theorems \ref{thm-exam} and \ref{thm-exam-2} are inspired by the proof of Theorem \ref{thm-main}. Perturbing at the critical values not only provides the annulus-to-disk dynamical behavior but also reduces the degrees as much as possible. For details, see \S\ref{sec-quartic} and \S\ref{sec-cubic}.

\vskip0.2cm
\noindent\textbf{Notations.} We collect some notations which will be used throughout of this article. Let $\N$, $\Z$, $\R$ and $\C$, respectively, be the set of natural numbers, integers, real numbers and complex numbers. For $r>0$, we use $\D(a,r):=\{z\in\C:|z-a|<r\}$ to denote the round disk centered at $a\in\C$ with radius $r$, and $\T_r:=\{z\in\C:|z|=r\}$ be the boundary of $\D(0,r)$. For $0<r<1$, we denote the annulus $\A_r:=\{z\in\C:r<|z|<1\}$.

\vskip0.2cm
\noindent\textbf{Acknowledgements.} This work is supported by the National Natural Science Foundation of China (grant No.\,11671092), the Natural Science Foundation of Jiangsu Province (No.\,BK20191246), the Fundamental Research Funds for the Central Universities (grant No. 0203-14380025) and EDF of Hunan Province (grant No.\,16C0763). We would like to thank Guizhen Cui and Yongcheng Yin for helpful comments and suggestions. We are also very grateful to the referee for insightful and detailed comments, suggestions and corrections; and to the editor for his patient replies on our several inquiries on the status of this paper.

\section{Semi-buried Julia components}\label{sec-semi-buried}

In this section, we will operate the first quasiconformal surgery on the given rational map $f$ and obtain a new rational map $h$. This map $h$ has a ``semi-buried" Julia component on which the restriction of $h$ is quasiconformally conjugate to $f$ on the Julia set $J(f)$.

Let $f$ be a rational map with degree at least two whose Julia set is connected. Further, we assume that the Fatou set of $f$ is non-empty and does not contain parabolic basins and Siegel disks. This implies that all the periodic Fatou components of $f$ are attracting or super-attracting. By a standard quasiconformal surgery, see \cite[Theorem 5.1, p.\,106]{CG93} for example, we assume in the following that $f$ is a post-critically finite rational map in the Fatou set. This implies that all the periodic Fatou components of $f$ are super-attracting and each critical point in the Fatou set will be iterated to the super-attracting cycle eventually.

Let $O_1$, $O_2$, $\dots$, $O_n$ be the collection of different cycles of super-attracting periodic Fatou components of $f$. It is well known that $1\leq n\leq 2d-2$ (see \cite{Shi87} or \cite[Theorem 9.4.1]{Bea91a}). In the remaining of this section, we assume that $n\geq 2$ and the case for $n=1$ will be handled in \S\ref{sec-one-orbit} and \S\ref{sec-dendrite}.

For $1\leq i\leq n$, let $p_i\geq 1$ be the (minimal) period of $O_i$ and $a_i$ a super-attracting periodic point of $f$ contained in $O_i$. For $1\leq j\leq p_i$, let $B_{i,j}$ be the Fatou component containing the point $f^{\circ (j-1)}(a_i)$. Hence we have $O_i=\{B_{i,j}:1\leq j\leq p_i\}$ for all $1\leq i\leq n$. Moreover, we use $\MA_i$ to denote the super-attracting basin of $f$ containing $O_i$ (note that each $B_{i,j}$ is connected but $\MA_i$ may be disconnected). Without loss of generality, we assume that $a_1=0$ and $a_n=\infty$.

\subsection{Cutting along the equipotentials I}\label{subsec-equi}

For performing the quasiconformal surgery, we need to divide $\EC$ into several pieces on which a quasiregular map $F$ will be piecewisely defined. This partition comes from some equipotential curves of $f$ in the super-attracting periodic orbits.

According to B\"{o}ttcher's theorem, each super-attracting periodic orbit $O_i$ provides the Riemann mappings $\phi_{i,j}:B_{i,j}\to\D$, where $1\leq i\leq n-1$ and $1\leq j\leq p_i$, such that $\phi_{i,j}(f^{\circ (j-1)}(a_i))=0$ and the following diagram commutes:
$$\begin{CD}
B_{i,1} @>f>> B_{i,2} @>f>> \cdots  @>f>> B_{i,p_i} @>f>> B_{i,1}\\
@VV\phi_{i,1}V   @VV\phi_{i,2}V        @VVV @VV\phi_{i,p_i}V  @VV\phi_{i,1}V\\
\D @>z\mapsto z^{d_{i,1}}>> \D @>z\mapsto z^{d_{i,2}}>> \cdots  @>z\mapsto z^{d_{i,p_i-1}}>> \D @>z\mapsto z^{d_{i,p_i}}>> \D,
\end{CD}$$
where\footnote{We assume that $d_{i,p_i}\geq 2$ and $d_{i,j}\geq 1$ for $1\leq j\leq p_i-1$ rather than $d_{i,1}\geq 2$ and $d_{i,j}\geq 1$ for $2\leq j\leq p_i$ since this normalization can reduce the degree of the new rational map after the surgery as much as possible. See Lemma \ref{lema:cover-disk-to-disk}.} $d_{i,p_i}\geq 2$ and $d_{i,j}\geq 1$ are positive integers for $1\leq j\leq p_i-1$. For convenience, we denote
\begin{equation}\label{equ:d_i}
d_i:=\prod_{j=1}^{p_i}d_{i,j}\geq 2.
\end{equation}
Hence $f^{\circ p_i}:B_{i,j}\to B_{i,j}$ is holomorphically conjugate to $z\mapsto z^{d_i}$ from $\D$ to itself, where $1\leq i\leq n-1$ and $1\leq j\leq p_i$.

An \textit{equipotential curve} (or \textit{equipotential} in short) $\gamma$ in $B_{i,j}$ is the preimage by $\phi_{i,j}$ of an Euclidean circle in $\D$ centered at 0. The radius of this circle is called the \textit{level} of $\gamma$ and is denoted by $L_{i,j}(\gamma)\in(0,1)$, i.e. $\gamma=\{z\in B_{i,j}:|\phi_{i,j}(z)|=L_{i,j}(\gamma)\}$.

We will define the corresponding potentials and levels in the basin $B_{n,j}$ (actually is its conformal image of the conjugacy map) with $1\leq j\leq p_n$ in the next section since they will not be used in the present section. The definitions will be modified slightly such that the levels of the equipotentials in $B_{n,j}$ are contained in $(1,+\infty)$ since $\infty\in B_{n,1}$.

Let $\gamma$ be a Jordan curve contained in $\EC\setminus\{\infty\}$. We use $D(\gamma)$ to denote the connected component of $\EC\setminus\gamma$ which does not contain $\infty$. For two disjoint connected compact sets $\gamma_1$ and $\gamma_2$ in $\EC\setminus\{\infty\}$ which are not singletons, we use $A(\gamma_1,\gamma_2)$ to denote the unique annular component\footnote{In order to emphasize the dynamical correspondences, we may modify the order of $\gamma_1$ and $\gamma_2$ defining an annulus. That means sometimes the annulus $A(\gamma_1,\gamma_2)$ will be denoted by $A(\gamma_2,\gamma_1)$.}
of $\EC\setminus(\gamma_1\cup\gamma_2)$. Moreover, $A(\gamma_1,\gamma_2)$ is biholomorphically equivalent to a standard annulus $\mathbb{A}_r:=\{z\in\C :r<|z|<1\}$ with $r\in(0,1)$ whose \textit{conformal modulus} is $\Mod(\mathbb{A}_r)=(1/2\pi)\log(1/r)$.

\begin{lema}[{holomorphic covering from disk to disk}]\label{lema:cover-disk-to-disk}
For each $1\leq i\leq n-1$, let $m_i\geq 1$ be an integer satisfying $m_i>d_{i,1}/(d_i-1)$. Then there exist equipotentials\footnote{If $p_i=1$ for some $1\leq i\leq n-1$, then $B_{i,2}=B_{i,1}$ and $\gamma_{i,2}$ is regarded as a curve in $B_{i,1}$.} $\gamma_{i,1}$, $\gamma_{i,p_i+1}$, $\xi_{i,1}\subset B_{i,1}$, $\gamma_{i,2}\subset B_{i,2}$ and a holomorphic branched mapping $F|_{D(\xi_{i,1})}:D(\xi_{i,1})\to\EC\setminus\overline{D}(\gamma_{i,2})$ satisfying the following conditions:
\begin{enumerate}
\item $L_{i,1}(\gamma_{i,p_i+1})<L_{i,1}(\xi_{i,1})<L_{i,1}(\gamma_{i,1})$;
\item $F(a_i)=\infty$ and $F:D(\xi_{i,1})\setminus\{a_i\}\to\C\setminus\overline{D}(\gamma_{i,2})$ is a degree $m_i$ covering map; and
\item $F(\xi_{i,1})=\gamma_{i,2}$ and $F(\gamma_{i,p_i+1})=\eta_i$, where $\eta_i$ is a real-analytic Jordan curve in $B_{n,1}$ separating $\infty$ from $\partial B_{n,1}$.
\end{enumerate}
\end{lema}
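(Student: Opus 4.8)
The plan is to construct the branched covering $F|_{D(\xi_{i,1})}$ by modifying the Böttcher coordinate on $B_{i,1}$ and then composing with a model map on the disk. Fix $1\leq i\leq n-1$ and work in the coordinate $\phi_{i,1}:B_{i,1}\to\D$, so that $f^{\circ p_i}:B_{i,1}\to B_{i,1}$ becomes $z\mapsto z^{d_i}$ on $\D$. The idea is that we want a map which sends a small disk around $a_i$ (the Böttcher center) onto the \emph{exterior} of a fixed equipotential level, with $a_i\mapsto\infty$ and local degree $m_i$ there, while still behaving compatibly with the dynamics on the annulus between $\xi_{i,1}$ and $\gamma_{i,1}$. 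Concretely, I would first choose the levels: pick $\rho_1\in(0,1)$ close to $1$ to be the level of $\gamma_{i,1}$, pick the level of $\xi_{i,1}$ to be some $\sigma\in(0,\rho_1)$, and take $\gamma_{i,p_i+1}$ to have level $\tau$ with $\tau<\sigma$. The inequality $m_i>d_{i,1}/(d_i-1)$ will be exactly what is needed so that, after the surgery, the combined map $F$ still has the correct degree count (this is the point of the footnote), and it enters here as a constraint guaranteeing that the model covering we write down actually has an annulus of the right modulus available to glue onto.

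Next I would write down the model. On the disk $\{|z|<\sigma\}$ in the $\phi_{i,1}$-coordinate, define a proper holomorphic map of degree $m_i$ onto $\widehat{\C}\setminus\overline{\D}(0,R)$ for a suitable $R\in(0,1)$, sending $0\mapsto\infty$; the simplest choice is $z\mapsto c\,z^{-m_i}$ for an appropriate constant $c$, post-composed with $\phi_{i,2}^{-1}$ rescaled so that its image is exactly $\widehat{\C}\setminus\overline{D}(\gamma_{i,2})$ where $\gamma_{i,2}\subset B_{i,2}$ is the equipotential of level $R$. Then set $F:=\phi_{i,2}^{-1}\circ(\text{this model})\circ\phi_{i,1}$ on $D(\xi_{i,1})$. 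By construction (b) holds: $F(a_i)=\infty$ and $F:D(\xi_{i,1})\setminus\{a_i\}\to\C\setminus\overline{D}(\gamma_{i,2})$ is a degree-$m_i$ covering. Condition (a) is immediate from the choices $\tau<\sigma<\rho_1$. For condition (c), $F(\xi_{i,1})=\gamma_{i,2}$ holds since $|z|=\sigma$ maps to $|w|=R$ under $z\mapsto c z^{-m_i}$ for the correct $c$; and $F(\gamma_{i,p_i+1})$ is then an analytic Jordan curve in $B_{i,2}$, which I then need to transport into $B_{n,1}$. Since eventually $B_{i,2}$ iterates toward the single retained orbit $O_n$, I would instead define $\gamma_{i,2}$ and the target so that $F$ lands directly in $B_{n,1}$: compose one more time with the (branch of the) iterate of $f$ carrying $B_{i,2}$ into the $O_n$-basin, choosing the level $R$ small enough that the image equipotentials $\eta_i$ lie inside $B_{n,1}$ and separate $\infty$ from $\partial B_{n,1}$. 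Analyticity of $\eta_i$ follows since it is a conformal image of a round circle.

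The main obstacle I anticipate is \emph{bookkeeping the degrees and the interplay of the several basins simultaneously}, rather than any single hard estimate. Specifically: making sure that the curve $\gamma_{i,2}$ (and the induced $\eta_i$) can be chosen with a consistent level across all $i$ so that later, when $F$ is assembled on all of $\widehat{\C}$, the pieces match up to a well-defined quasiregular map; and verifying that the constraint $m_i>d_{i,1}/(d_i-1)$ is precisely what keeps the modulus of the annulus $A(\xi_{i,1},\gamma_{i,1})$ positive after we replace the interior dynamics by the degree-$m_i$ blow-up (the old map was degree $d_i$ on the whole disk; removing the central disk and inserting a pole of order $m_i$ changes the Riemann–Hurwitz count, and the inequality ensures the annular region we keep still admits the covering). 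I would check this by a direct modulus/Riemann–Hurwitz computation on the annulus $A(\xi_{i,1},\gamma_{i,1})$, comparing $\Mod(A(\xi_{i,1},\gamma_{i,1}))$ with $\Mod$ of its image under the restriction of $f^{\circ p_i}$, which is the only place the hypothesis on $m_i$ is genuinely used. Everything else is a routine verification that compositions of the Böttcher coordinates with explicit power maps have the claimed mapping properties.
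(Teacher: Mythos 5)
Your model map (a power map of degree $m_i$ in the B\"{o}ttcher coordinate, post-composed with a conformal parametrization of the target) is the same device the paper uses, but two steps of your plan do not work as written, and the second one is the actual content of the lemma. First, a repairable slip: the target $\EC\setminus\overline{D}(\gamma_{i,2})$ cannot be parametrized by ``$\phi_{i,2}^{-1}$ rescaled'', since the image of $\phi_{i,2}^{-1}$ is $B_{i,2}$, whereas $\EC\setminus\overline{D}(\gamma_{i,2})$ is the complement of a small closed Jordan disk around $f(a_i)$ and contains all the other basins, the Julia set and $\infty$; one needs a genuine Riemann map $\psi_i:\EC\setminus\overline{D}(\gamma_{i,2})\to\D$ with $\psi_i(\infty)=0$, and the geometry of this map is exactly what has to be controlled. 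Second, and decisively, condition (c) --- that $\eta_i=F(\gamma_{i,p_i+1})$ lies in $B_{n,1}$ and separates $\infty$ from $\partial B_{n,1}$ --- is not addressed by your construction. Your proposed fix, to ``compose one more time with the (branch of the) iterate of $f$ carrying $B_{i,2}$ into the $O_n$-basin'', is impossible: $B_{i,2}$ belongs to the super-attracting cycle $O_i$ with $i\leq n-1$, so its entire forward orbit stays in $O_i$ and never meets the basin of $O_n$. (Even if such an iterate existed, post-composing would destroy $F(\xi_{i,1})=\gamma_{i,2}$ and change the covering degree.)

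In the paper the inclusion $\eta_i\subset B_{n,1}$ is obtained quantitatively, and this is precisely where the hypothesis $m_i>d_{i,1}/(d_i-1)$ enters --- not, as you suggest, in a Riemann--Hurwitz or modulus-positivity count on $A(\xi_{i,1},\gamma_{i,1})$. One takes the level $r$ of $\gamma_{i,1}$ \emph{small} (not close to $1$ as in your choice of $\rho_1$), the level of $\gamma_{i,p_i+1}$ equal to $r^{d_i}$ and $s\in(r^{d_i},r)$ close to $r$. Since $f$ has local degree $d_{i,1}$ at $a_i$, the disk $D(\gamma_{i,2})=f(D(\gamma_{i,1}))$ has diameter comparable to $r^{d_{i,1}}$, so a modulus estimate for $A(\gamma_{i,2},\partial B_{n,1})$ together with Koebe distortion applied to $\psi_i$ gives $\log|w|\geq d_{i,1}\log r-C$ for all $w\in\psi_i(\partial B_{n,1})$. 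On the other hand $Q_{m_i}\circ\phi_{i,1}(\gamma_{i,p_i+1})$ is the round circle of radius $r^{d_i m_i}/s^{m_i}$, and requiring this radius to be below $r^{d_{i,1}}e^{-C}$ for all small $r$ is exactly the inequality $(d_i-1)m_i>d_{i,1}$. Without this estimate (or some substitute for it), your construction yields (a), (b) and the fact that $F(\gamma_{i,p_i+1})$ is \emph{some} analytic curve in $\EC\setminus\overline{D}(\gamma_{i,2})$, but not that it lies in $B_{n,1}$, which is what the later surgery and the semi-buried property depend on.
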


\begin{proof}
Without loss of generality, we prove this lemma only for $i=1$ since the other cases are completely similar. According to the normalization, we have $a_1=0$. For each small $r\in(0,1)$, let $\gamma_{1,1}$, $\gamma_{1,p_1+1}$ be the equipotentials in $B_{1,1}$ such that $L_{1,1}(\gamma_{1,1})=r$ and $L_{1,1}(\gamma_{1,p_1+1})=r^{d_1}$. For $2\leq j\leq p_1$, we denote $\gamma_{1,j}:=f^{\circ (j-1)}(\gamma_{1,1})$. Then $\gamma_{1,j}$ is an equipotential in $B_{1,j}$ for all $1\leq j\leq p_1$. See Figure \ref{Fig_surgery} for a partial illustration.

\begin{figure}[!htpb]
  \setlength{\unitlength}{1mm}
  \centering
  \includegraphics[width=0.95\textwidth]{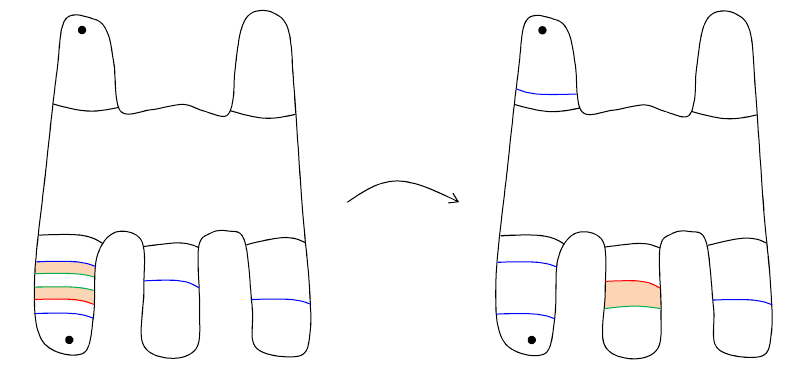}
  \put(-113,5){$0$}
  \put(-110,48){$\infty$}
  \put(-105,16){$\gamma_{1,1}$}
  \put(-106,8){$\gamma_{1,p_1+1}$}
  \put(-90,13){$\gamma_{1,2}$}
  \put(-73,10){$\gamma_{1,p_1}$}
  \put(-121,11){\footnotesize{$\xi_{1,1}$}}
  \put(-121,14){\footnotesize{$\alpha_{1,1}$}}
  \put(-121,16){\footnotesize{$\beta_{1,1}$}}
  \put(-113,23){\footnotesize{$\partial B_{1,1}$}}
  \put(-99,22){\footnotesize{$\partial B_{1,2}$}}
  \put(-84,22){\footnotesize{$\partial B_{1,p_1}$}}
  \put(-111,36){\footnotesize{$\partial B_{n,1}$}}
  \put(-86,36){\footnotesize{$\partial B_{n,p_n}$}}
  \put(-96,47){$\EC$}
  \put(-62,31){$F$}
  \put(-27,47){$\EC$}
  \put(-44,5){$0$}
  \put(-41,48){$\infty$}
  \put(-37,16){$\gamma_{1,1}$}
  \put(-38,8){$\gamma_{1,p_1+1}$}
  \put(-21,13){$\gamma_{1,2}$}
  \put(-21,9){$\eta_{1,2}$}
  \put(-5,10){$\gamma_{1,p_1}$}
  \put(-44,23){\footnotesize{$\partial B_{1,1}$}}
  \put(-30,22){\footnotesize{$\partial B_{1,2}$}}
  \put(-15,22){\footnotesize{$\partial B_{1,p_1}$}}
  \put(-42,36){\footnotesize{$\partial B_{n,1}$}}
  \put(-17,36){\footnotesize{$\partial B_{n,p_n}$}}
  \put(-34,42){$\eta_1$}
  \caption{The sketch of the definition of the map $F|_{D(\xi_{1,1})}:D(\xi_{1,1})\to\EC\setminus\overline{D}(\gamma_{1,2})$. For further references, some other equipotentials are drawn also in the super-attracting basins and the boundaries of immediate basins are also marked. For simplicity, we only draw the first super-attracting orbit $B_{1,1}$, $\cdots$, $B_{1,p_1}$.}
  \label{Fig_surgery}
\end{figure}

Let $\xi_{1,1}$ be an equipotential in $B_{1,1}$ such that $L_{1,1}(\xi_{1,1})=s\in (r^{d_1},r)$. Hence $L_{1,1}(\gamma_{1,p_1+1})<L_{1,1}(\xi_{1,1})<L_{1,1}(\gamma_{1,1})$. Recall that $\phi_{1,1}:D(\xi_{1,1})\to\D(0,s)$ is the restriction of the B\"{o}ttcher map. For $m_1\geq 1$, we define $Q_{m_1}(z)=z^{m_1}/s^{m_1}:\D(0,s)\to\D$. Let $\psi_1:\EC\setminus\overline{D}(\gamma_{1,2})\to\D$ be a conformal map such that $\psi_1(\infty)=0$. Define \begin{equation*}
F:=\psi_1^{-1}\circ Q_{m_1}\circ\phi_{1,1}.
\end{equation*}
Then $F:D(\xi_{1,1})\to\EC\setminus\overline{D}(\gamma_{1,2})$ is a holomorphic branched mapping with degree $m_1$, $F(0)=\infty$ and $0$ is the unique possible critical point. Since
\begin{equation}\label{equ-Q-phi-gamma}
Q_{m_1}\circ\phi_{1,1}(\gamma_{1,p_1+1})=\{z\in\C:|z|=r^{d_1 m_1}/s^{m_1}\}\subset\D
\end{equation}
is an Euclidean circle, it follows that $\eta_1:=F(\gamma_{1,p_1+1})=\psi_1^{-1}\{z:|z|=r^{d_1 m_1}/s^{m_1}\}$ is a real-analytic Jordan curve separating $\infty$ from $\gamma_{1,2}$.

We need to find a sufficient condition to guarantee that $\eta_1$ is contained in $B_{n,1}$. Since the degree of the restriction of $f$ on $B_{1,1}$ is $d_{1,1}$ and $a_1=0$, the map $f$ can be written near the origin as
\begin{equation*}
f(z)=f(0)+b_1 z^{d_{1,1}}+O(z^{d_{1,1}+1}),
\end{equation*}
where $b_1\neq 0$ is a constant depending only on $f$. If $r>0$ is sufficiently small, then there exists a constant $C_1>0$ independent of $r$ such that $D(\gamma_{1,2})=f(D(\gamma_{1,1}))$ is a Jordan disk centered at $f(0)$ with radius about $C_1r^{d_{1,1}}$. More specifically, $r$ can be chosen small enough such that \begin{equation}\label{equ:f0-posi}
\D(f(0),C_1r^{d_{1,1}}/2) \subset D(\gamma_{1,2})\subset\D(f(0),2C_1r^{d_{1,1}}).
\end{equation}
Then there exists a constant $C_2>0$ depending on $C_1$ but independent of $r>0$ such that the conformal modulus satisfies
\begin{equation}\label{equ:f0-posi-1}
\Mod(\D\setminus \overline{D}(\psi_1(\partial B_{n,1})))=\Mod(A(\gamma_{1,2},\partial B_{n,1}))\leq \frac{1}{2\pi}\log \frac{1}{r^{d_{1,1}}}+C_2.
\end{equation}

Note that the conformal map $\psi_1:\EC\setminus\overline{D}(\gamma_{1,2})\to\D$ can be written as $\psi_1=\psi_{1,2}\circ\psi_{1,1}$, where
\begin{equation*}
\psi_{1,1}(z)=\frac{1}{z-f(0)} \text{\quad and\quad} \psi_{1,2}:\psi_{1,1}(\EC\setminus\overline{D}(\gamma_{1,2}))\to\D
\end{equation*}
are conformal maps. Both $\psi_1$ and $\psi_{1,2}$ can be extended homeomorphically to the boundaries. Since $\psi_1(\infty)=0$, we have $\psi_{1,2}(0)=0$. Moreover, $\psi_{1,1}(\EC\setminus\overline{D}(\gamma_{1,2}))$ is a bounded Jordan disk in $\C$ containing the Jordan curve $\psi_{1,1}(\partial B_{n,1})$ which surrounds the origin. See Figure \ref{Fig_Koebe}.

\begin{figure}[!htpb]
  \setlength{\unitlength}{1mm}
  \centering
  \includegraphics[width=0.9\textwidth]{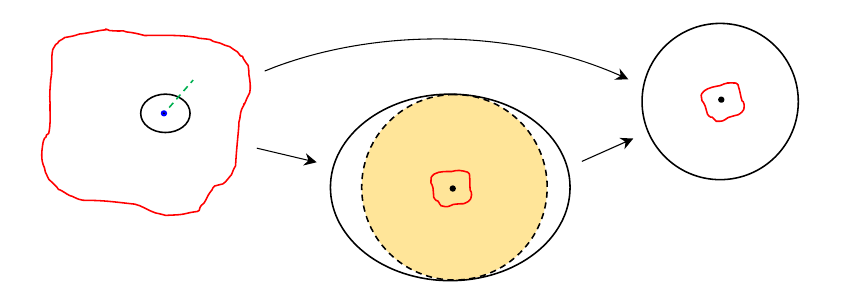}
  \put(-55,38){$\psi_1$}
  \put(-78,23){$\psi_{1,1}$}
  \put(-36,24){$\psi_{1,2}$}
  \put(-98,22){$\gamma_{1,2}$}
  \put(-102,10){$\partial B_{n,1}$}
  \put(-87.5,30){$f(0)$}
  \put(-52.3,15){$0$}
  \put(-15.8,27){$0$}
  \put(-18,13.5){$\D$}
  \put(-41,5){$\psi_{1,1}(\gamma_{1,2})$}
  \put(-61,10){$\psi_{1,1}(\partial B_{n,1})$}
  \caption{The map $\psi_1:\EC\setminus\overline{D}(\gamma_{1,2})\to\D$ is written as $\psi_1=\psi_{1,2}\circ\psi_{1,1}$. To obtain the bounded shape of $\psi_1(\partial B_{n,1})$, we apply Koebe's distortion theorem to $\psi_{1,2}$ in the round disk $\D(0,1/(2C_1 r^{d_{1,1}}))$, which is a subset of $\psi_{1,1}(\EC\setminus\overline{D}(\gamma_{1,2}))$.}
  \label{Fig_Koebe}
\end{figure}

Let $C_2'$ and $C_2''$ be two positive constants such that
\begin{equation*}
C_2'\leq |z-f(0)| \leq C_2'' \text{\quad for all\quad} z\in\partial B_{n,1}.
\end{equation*}
Then we have
\begin{equation*}
\frac{1}{C_2''}\leq |\zeta|\leq \frac{1}{C_2'} \text{\quad for all\quad} \zeta\in \psi_{1,1}(\partial B_{n,1}).
\end{equation*}
By \eqref{equ:f0-posi} we have
\begin{equation*}
\frac{1}{2 C_1 r^{d_{1,1}}}\leq |\zeta|\leq \frac{2}{C_1 r^{d_{1,1}}} \text{\quad for all\quad} \zeta\in \psi_{1,1}(\gamma_{1,2}).
\end{equation*}

Let $r>0$ be small enough such that $2/C_2'\leq 1/(2 C_1 r^{d_{1,1}})$. Then we obtain a univalent map
\begin{equation*}
\psi_{1,2}:\D(0,1/(2C_1 r^{d_{1,1}}))\to\D,  \text{\quad where\quad} \psi_{1,2}(0)=0.
\end{equation*}
By Koebe's distortion theorem (see \cite[Theorem 1.6, p.\,21]{Pom75} or \cite[Theorem 2.6, p.\,33]{Dur83}), there exists a constant $C_3> 1$ independent of small $r>0$ such that
\begin{equation}\label{equ:est-C3}
C_3^{-1}|w_2|\leq |w_1|\leq C_3|w_2| \text{\quad for all\quad} w_1, w_2\in \psi_1(\partial B_{n,1}).
\end{equation}
In particular, $C_3$ can be chosen as
\begin{equation*}
C_3:=\frac{1/C_2'}{(1-\frac{1}{2})^2}\Big/\frac{1/C_2''}{(1+\frac{1}{2})^2}=\frac{16 C_2''}{9 C_2'}.
\end{equation*}

By \eqref{equ:f0-posi-1} and \eqref{equ:est-C3}, it follows that there exists a constant $C_4>0$ independent of small $r>0$ such that for any $w\in\psi_1(\partial B_{n,1})$, we have
\begin{equation*}
\log|w|\geq\log r^{d_{1,1}}-C_4.
\end{equation*}
In order to guarantee that $\eta_1\subset B_{n,1}$, by \eqref{equ-Q-phi-gamma}, it is sufficient to obtain the inequality
\begin{equation*}
\log\frac{r^{d_1 m_1}}{s^{m_1}}\leq\log r^{d_{1,1}}-C_4.
\end{equation*}
Since $d_1$, $d_{1,1}$, $m_1$ are positive integers, $r>0$ can be chosen arbitrarily small and $s\in (r^{d_1},r)$ can be chosen arbitrarily close to $r$, it implies that we only need to guarantee that
\begin{equation*}
\frac{r^{d_1 m_1}}{r^{m_1}}<r^{d_{1,1}}.
\end{equation*}
This is equivalent to $(d_1-1)m_1>d_{1,1}$, i.e. $m_1>d_{1,1}/(d_1-1)$, as desired.
\end{proof}

\begin{rmk}
Since $1\leq d_{i,1}\leq d_i$ and $d_i\geq 2$ for all $1\leq i\leq n-1$, the inequality $m_i>d_{i,1}/(d_i-1)$ is always satisfied if we set $m_i=3$.

If $d_{i,1}=1$ and $d_i=2$ for some $1\leq i\leq n-1$, then $m_i$ can be chosen to be $2$. This observation will allow us to construct a family of quartic rational functions whose Julia sets contain buried components which are Jordan curves. See \S\ref{sec-quartic}.
\end{rmk}

Denote $W:=\EC\setminus \cup_{i=1}^{n-1}D(\gamma_{i,1})$. We define $F|_W:=f|_W$. Then $F|_{\gamma_{i,1}}:\gamma_{i,1}\to\gamma_{i,2}$ is a degree $d_{i,1}$ covering map for all $1\leq i\leq n-1$.

\subsection{Holomorphic covering from an annulus to a disk I}\label{subsec-annu-to-disk}

Note that the map $F$ has been defined on the Riemann sphere except on the annulus $A(\xi_{i,1},\gamma_{i,1})$, where $1\leq i\leq n-1$. In order to use the quasiconformal surgery principle, we need to consider a holomorphic covering from an annulus to a disk.

The first statement of the following lemma was stated in \cite[Lemma A.3]{God15} and a sketch of the proof was given. For completeness, we include a detailed proof here and add a complementary result.

\begin{lema}[{holomorphic covering from annulus to disk}]\label{lema:ann-to-disk-std}
Let $\ell$ and $m$ be two positive integers. Then there exist a constant $r>0$ and a holomorphic branched covering map $\psi:\A_r\to \D$ with degree $\ell+m$ such that
\begin{enumerate}
\item $\psi$ has $\ell+m$ critical points in $\A_r$;
\item $\psi$ can be extended continuously to $\partial\A_r$ by a degree $\ell$ covering $\psi|_{\partial\D}:\partial\D\to\partial\D$ and a degree $m$ covering $\psi|_{\T_r}:\T_r\to\partial\D$; and
\item $\Mod(\A_r)<\tfrac{2}{\pi}\log 2$.
\end{enumerate}
Further, suppose that $\ell=m$ and $K>1$. Then $\psi:\A_r\to \D$ can be chosen such that
\begin{enumerate}\setcounter{enumi}{3}
\item $\psi$ has exactly $2$ critical values at $\pm\, 1/K$;
\item $\Mod(\A_r)<\tfrac{1}{m\pi}\log (2K+1)$; and
\item $\psi^{-1}(\Gamma)\subset\A_r$ is a connected compact set separating $0$ from $\infty$, where $\Gamma\subset\D$ is a connected compact set connecting $1/K$ with $-1/K$.
\end{enumerate}
\end{lema}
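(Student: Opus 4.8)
The plan is to construct $\psi$ explicitly as a composition of power maps and a Blaschke-type quadratic, taking full advantage of symmetry when $\ell=m$. For the first part, I would start from a degree-$(\ell+m)$ rational map of the form $B(z)=z^\ell R(z)$ where $R$ is chosen so that $B$ maps the unit circle to itself with degree $\ell$ and maps a suitable inner circle $\T_r$ to $\partial\D$ with degree $m$; the natural candidate is the finite Blaschke product $B(z)=z^\ell\prod_{k=1}^{m}\frac{z-a_k}{1-\overline{a_k}z}$ with all $a_k$ inside $\D$, or more simply $B(z)=c\,z^\ell/(z^m-\rho)^{?}$ adjusted to have the correct mapping degrees on the two boundary circles. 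One computes that such a $B$, restricted to the annular region $\A_r$ bounded by $\T_r$ and $\partial\D$ where $r$ is the modulus of the critical values' preimage configuration, is a branched covering of degree $\ell+m$ onto $\D$; the number of critical points in $\A_r$ is $\ell+m$ by Riemann–Hurwitz applied to the annulus-to-disk covering (the Euler characteristics give $-(\ell+m) = (\ell+m)\cdot 0 - \#\mathrm{Crit}$). The modulus bound (c) then follows by a direct estimate: as the $a_k\to 0$ (i.e.\ the inner circle shrinks), $\Mod(\A_r)$ can be made as large as we like, but there is a \emph{universal} upper obstruction coming from the fact that $B$ must be a genuine covering — so I would instead exhibit one specific symmetric choice (e.g.\ $B(z)=z^\ell(z^m+t)/(1+tz^m)$ type) for which a direct computation of $r=r(t)$ and optimizing over the free parameter gives $\Mod(\A_r)<\frac{2}{\pi}\log 2$.

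For the second half ($\ell=m$, prescribed critical values $\pm 1/K$), I would impose the symmetry $\psi(-z)=-\psi(z)$ and $\psi(\overline z)=\overline{\psi(z)}$, which forces the $2m$ critical points to come in orbits of the involution and collapse to just two critical values, symmetric about $0$. Concretely, set $\psi(z)=z^m\,g(z^m)$ where $g$ is a Möbius-type factor normalized so that the resulting map has exactly two critical values; then parametrize by $K$ and read off $r=r(K,m)$ explicitly. The modulus estimate (e) should reduce, after taking $m$-th roots in the $w=z^m$ coordinate, to a one-variable annulus computation: $\Mod(\A_r)=\frac1m\Mod(\text{annulus in }w)$, and the $w$-annulus is bounded by curves whose moduli are controlled by $\log(2K+1)$ via an elementary estimate (Grötzsch or a direct comparison of the round annulus $\{1/(2K+1)<|w|<1\}$). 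Part (f) is then a topological observation: the preimage under $\psi$ of a compact connected set $\Gamma\subset\D$ joining the two critical values $\pm 1/K$ is a compact set in $\A_r$, and because $\Gamma$ contains both critical values it must "disconnect" $\A_r$ in the way that separates the two boundary circles $\T_r$ and $\partial\D$ — equivalently, its complement in $\A_r$ has no annular component of full modulus; I would make this precise by noting that $\psi$ restricted to either complementary region $\A_r\setminus\psi^{-1}(\Gamma)$ would otherwise be an unbranched covering of a disk, hence trivial, contradicting the mapping degrees on the boundary circles.

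The main obstacle I anticipate is \emph{item (f)} together with pinning down the explicit constants in (c) and (e). The separation statement in (f) is not automatic from a generic holomorphic covering — it genuinely uses that \emph{both} critical values lie on $\Gamma$, so that every sheet of $\psi$ gets "cut" and no embedded essential annulus survives in the complement; getting this right requires a careful count of the connected components of $\psi^{-1}(\Gamma)$ and of $\A_r\setminus\psi^{-1}(\Gamma)$ via Riemann–Hurwitz for the (possibly branched) restricted maps, and an argument that the component of the complement adjacent to $\partial\D$ is a disjoint union of disks rather than an annulus. For the constants, the trap is that a crude estimate gives a bound with the wrong dependence on $K$ or $m$; the fix is to work in the $w=z^m$ coordinate and use the exact extremal length of the round annulus rather than a distortion bound, which is where the clean forms $\frac{2}{\pi}\log 2$ and $\frac{1}{m\pi}\log(2K+1)$ come from. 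Everything else — the degree count, the critical point count, the boundary covering degrees — is a routine application of Riemann–Hurwitz and the argument principle to the explicit formula.
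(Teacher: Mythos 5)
The construction at the heart of your first paragraph would fail, and this is a genuine gap rather than a detail of bookkeeping. Your candidates do not have the required boundary behaviour: a finite Blaschke product $z^\ell\prod_{k=1}^{m}\frac{z-a_k}{1-\overline{a_k}z}$ maps $\partial\D$ onto itself with degree $\ell+m$ (not $\ell$) and maps $\T_r$ strictly into $\D$ (not onto $\partial\D$), so its restriction to $\A_r$ is not even proper onto $\D$; and $c\,z^\ell/(z^m-\rho)$ has degree $\max(\ell,m)$ and does not send $\partial\D$ to $\partial\D$. More fundamentally, no rational map restricted to a genuinely round annulus can do what you ask: if a rational $B$ sends both $\partial\D$ and $\T_r$ into $\partial\D$, it has no poles on $\overline{\A}_r$, so Schwarz reflection in the two circles gives $B(1/\bar z)=1/\overline{B(z)}$ and $B(r^2/\bar z)=1/\overline{B(z)}$, hence $B(u)=B(r^2u)$ for all $u$, forcing $B$ to be constant. (Equivalently, a Blaschke product of degree $d$ has only $d-1$ critical points in all of $\D$, while Riemann--Hurwitz, with the correct values $\chi(\D)=1$ and $\chi(\A_r)=0$ rather than the ones you wrote, requires $d$ critical points inside the annulus.) The missing idea is that the annulus must be produced as the full preimage of a round disk under a rational map possessing a pole, and only afterwards uniformized, so that $r$ is determined rather than prescribed. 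This is exactly what the paper does with the McMullen family $g_\lambda(z)=z^\ell+\lambda/z^m$, $\lambda>0$: all $\ell+m$ free critical values have modulus $v$, the preimage of $\EC\setminus\overline{\D}(0,Kv)$ consists of two disks $V_0\ni 0$ and $V_\infty\ni\infty$ with boundary degrees $m$ and $\ell$, so $g_\lambda$ maps the complementary (non-round) annulus onto $\D(0,Kv)$ with degree $\ell+m$; then $\psi$ is this map pre-composed with a conformal map $\A_r\to A(\partial V_0,\partial V_\infty)$ and post-composed with $z\mapsto z/(Kv)$. The bounds (c) and (e) come from sandwiching $\partial V_0$ and $\partial V_\infty$ between explicit round circles, not from optimizing a free parameter; your $w=z^m$ intuition for the $1/m$ factor is consistent with this, but you never arrive at a map to which it applies.

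Concerning (d)--(f), which you yourself flag as the obstacle, your sketch is not yet an argument: you would still need to show that the $2m$ critical points yield only two critical values, and the claim that the complement of $\psi^{-1}(\Gamma)$ ``would otherwise be an unbranched covering of a disk'' is unsubstantiated ($\D\setminus\Gamma$ need not be a disk, and the connectivity of $\psi^{-1}(\Gamma)$ is not addressed). In the symmetric McMullen model with $\ell=m$ and $\lambda>0$ these points are nearly immediate: the critical values are exactly $\pm v$ with $v=2\sqrt{\lambda}$, and in the coordinate $w=z^m$ the map becomes $w\mapsto w+\lambda/w$, which sends the circle $|w|=\sqrt{\lambda}$ onto the segment $[-v,v]$; hence $g_\lambda^{-1}([-v,v])$ is the round circle $|z|=\lambda^{1/(2m)}$, a connected compact set separating the two boundary curves, which is precisely how the paper justifies (f). So beyond repairing the basic construction, you would still need this explicit symmetric model (or an equivalent) to secure (d) and (f).
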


\begin{proof}
The proof is based on the study of the dynamical properties of the McMullen maps
\begin{equation*}
g_\lambda(z)=z^\ell+\lambda/z^m, \text{ where } \lambda>0.
\end{equation*}
Except $0$ and $\infty$, it is easy to see that $g_\lambda$ has $\ell+m$ critical points and the critical values of $g_\lambda$ are
\begin{equation}\label{equ-cv}
v_j=\Big(1+\frac{\ell}{m}\Big)\Big(\frac{m}{\ell}\lambda\Big)^{\frac{\ell}{\ell+m}}e^{\frac{\ell j}{\ell+m}2\pi\ii}, \text{ where }1\leq j\leq \ell+m.
\end{equation}
Note that $v_j$ may be equal to $v_k$ if $1\leq j\neq k\leq \ell+m$. Let $v:=v_{\ell+m}>0$ be the modulus of each $v_j$, where $1\leq j\leq \ell+m$.

Let $K>1$ be a constant. By Riemann-Hurwitz's formula, it follows that $g_\lambda^{-1}(\EC\setminus\overline{\D}(0,Kv))$ consists of two Jordan disks $V_0$ and $V_\infty$ such that
\begin{enumerate}
\item $0\in V_0$ and $\infty\in V_\infty$;
\item $g_\lambda|_{\partial V_\infty}:\partial V_\infty\to \T_{Kv}$ is a covering map with degree $\ell$; and
\item $g_\lambda|_{\partial V_0}:\partial V_0\to \T_{Kv}$ is a covering map with degree $m$.
\end{enumerate}
Therefore, $g_\lambda:A(\partial V_0,\partial V_\infty)\to\D(0,Kv)$ is a holomorphic branched covering map with degree $\ell+m$.
For all $\lambda>0$, then it is easy to check that
\begin{equation*}
|z|< \Big(\frac{m}{\ell}\lambda\Big)^{\frac{1}{\ell+m}}\Big(K+\frac{(K+1)\ell}{m}\Big)^{\frac{1}{\ell}} \text{ for every } z\in\partial V_\infty,
\end{equation*}
and
\begin{equation*}
|z|> \Big(\frac{m}{\ell}\lambda\Big)^{\frac{1}{\ell+m}}\Big(\frac{\ell}{m}\Big)^{\frac{1}{m}}\Big(K+1+\frac{K\ell}{m}\Big)^{-\frac{1}{m}} \text{ for every } z\in\partial V_0.
\end{equation*}
Since $x\mapsto \tfrac{1}{x}\log(K+(K+1)x)$ is decreasing on $[1,+\infty)$ for all $K>1$, this implies that
\begin{equation*}
\begin{split}
&~2\pi\,\Mod(A(\partial V_0,\partial V_\infty))\\
< &~ \frac{1}{\ell}\log\Big(K+\frac{(K+1)\ell}{m}\Big)+\frac{1}{m}\log\Big(K+\frac{(K+1)m}{\ell}\Big)\\
\leq &~ \frac{1}{\ell}\log\big(K+(K+1)\ell\big)+\frac{1}{m}\log\big(K+(K+1)m\big)\leq 2\log(2K+1).
\end{split}
\end{equation*}
In particular, if $K=3/2$, then we have $\Mod(A(\partial V_0,\partial V_\infty))< \tfrac{2}{\pi}\log 2$. Moreover, if $\ell=m\geq 1$, then
\begin{equation*}
2\pi\,\Mod(A(\partial V_0,\partial V_\infty))< \frac{2}{m}\log(2K+1).
\end{equation*}

Let $0<r<1$ be the number such that $\Mod(\A_r)=\Mod(A(\partial V_0,\partial V_\infty))$. Then there exist two conformal mappings $\psi_1:\A_r\to A(\partial V_0,\partial V_\infty)$ and $\psi_2(z)=z/(Kv):\D(0,Kv)\to\D$. Then $\psi:=\psi_2\circ g_\lambda\circ\psi_1:\A_r\to\D$ is the required holomorphic function. Indeed, the statement (f) holds since $g_\lambda^{-1}([-v,v])\subset A(\partial V_0,\partial V_\infty)$ is a connected curve separating $0$ from $\infty$.
\end{proof}

\begin{rmk}
Lemma \ref{lema:ann-to-disk-std} is similar to the key lemma in \cite[Lemma 2.1]{PT99} (see also \cite[Lemma 7.47]{BF14}) about the annulus-to-disk branched coverings. However, here the covering from annulus to disk is required to be holomorphic. We will use the properties (d), (e) and (f) of Lemma \ref{lema:ann-to-disk-std} in \S\ref{sec-dendrite}.
\end{rmk}

Let us continue the construction of $F$. We will use the properties (a), (b) and (c) of Lemma \ref{lema:ann-to-disk-std} to prove the following result.

\begin{lema}\label{lema:annu-disk}
For each $1\leq i\leq n-1$, there exist two equipotentials $\alpha_{i,1}$, $\beta_{i,1}$ in $B_{i,1}$, an equipotential $\eta_{i,2}$ in $B_{i,2}$ and a holomorphic branched covering map $F|_{A(\alpha_{i,1},\beta_{i,1})}:A(\alpha_{i,1},\beta_{i,1})\to D(\eta_{i,2})$ with degree $d_{i,1}+m_i$ such that
\begin{enumerate}
\item $L_{i,1}(\xi_{i,1})<L_{i,1}(\alpha_{i,1})<L_{i,1}(\beta_{i,1})<L_{i,1}(\gamma_{i,1})$ and $L_{i,2}(\eta_{i,2})<L_{i,2}(\gamma_{i,2})$;
\item $F|_{A(\alpha_{i,1},\beta_{i,1})}$ has $d_{i,1}+m_i$ critical points in $A(\alpha_{i,1},\beta_{i,1})$; and
\item $F|_{A(\alpha_{i,1},\beta_{i,1})}$ can be extended continuously to $\alpha_{i,1}\cup\beta_{i,1}$ by a degree $m_i$ covering $F|_{\alpha_{i,1}}:\alpha_{i,1}\to\eta_{i,2}$ and a degree $d_{i,1}$ covering $F|_{\beta_{i,1}}:\beta_{i,1}\to\eta_{i,2}$.
\end{enumerate}
\end{lema}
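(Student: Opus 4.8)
The plan is to realize $F|_{A(\alpha_{i,1},\beta_{i,1})}$ as a conformal copy of the annulus-to-disk model covering produced by Lemma~\ref{lema:ann-to-disk-std}, after first arranging that the annulus $A(\xi_{i,1},\gamma_{i,1})$ --- on which $F$ was left undefined at the end of \S\ref{subsec-equi} --- has sufficiently large modulus. For the latter I would revisit the choices made in the proof of Lemma~\ref{lema:cover-disk-to-disk}. Fix $1\leq i\leq n-1$; there $\gamma_{i,1}$ and $\xi_{i,1}$ are the equipotentials in $B_{i,1}$ of levels $r$ and $s\in(r^{d_i},r)$. The hypothesis $m_i>d_{i,1}/(d_i-1)$ says precisely that $d_i-d_{i,1}/m_i>1$, so I would fix an exponent $\theta_i\in(1,\,d_i-d_{i,1}/m_i)$ and put $s=r^{\theta_i}$. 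With this choice the decisive inequality $\log(r^{d_i m_i}/s^{m_i})\leq\log r^{d_{i,1}}-C_4$ used in that proof becomes $\big(m_i(d_i-\theta_i)-d_{i,1}\big)\log(1/r)\geq C_4$, whose leading coefficient is positive; hence it holds for all sufficiently small $r$ and the conclusion of Lemma~\ref{lema:cover-disk-to-disk} is unchanged, while at the same time $\Mod(A(\xi_{i,1},\gamma_{i,1}))=\tfrac{1}{2\pi}\log(r/s)=\tfrac{\theta_i-1}{2\pi}\log(1/r)\to\infty$ as $r\to0$. Thus I may assume $\Mod(A(\xi_{i,1},\gamma_{i,1}))>\tfrac{2}{\pi}\log2$.

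Next I would apply Lemma~\ref{lema:ann-to-disk-std} with $\ell=d_{i,1}$ and $m=m_i$ to obtain a holomorphic branched covering $\psi_i:\A_{\rho_i}\to\D$ of degree $d_{i,1}+m_i$ having $d_{i,1}+m_i$ critical points, extending continuously to $\partial\A_{\rho_i}$ as a degree $d_{i,1}$ covering $\partial\D\to\partial\D$ and a degree $m_i$ covering $\T_{\rho_i}\to\partial\D$, and with $\Mod(\A_{\rho_i})<\tfrac{2}{\pi}\log2$. Since $\Mod(\A_{\rho_i})<\tfrac{2}{\pi}\log2<\Mod(A(\xi_{i,1},\gamma_{i,1}))$, I would choose equipotentials $\alpha_{i,1},\beta_{i,1}$ in $B_{i,1}$ with $L_{i,1}(\xi_{i,1})<L_{i,1}(\alpha_{i,1})<L_{i,1}(\beta_{i,1})<L_{i,1}(\gamma_{i,1})$ and $\Mod(A(\alpha_{i,1},\beta_{i,1}))=\Mod(\A_{\rho_i})$, together with an equipotential $\eta_{i,2}$ in $B_{i,2}$ of level less than $L_{i,2}(\gamma_{i,2})$, so that $D(\eta_{i,2})=\phi_{i,2}^{-1}(\D(0,L_{i,2}(\eta_{i,2})))$ is a Jordan domain with analytic boundary $\eta_{i,2}$. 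Letting $\sigma_i:A(\alpha_{i,1},\beta_{i,1})\to\A_{\rho_i}$ be the conformal map carrying the inner boundary $\alpha_{i,1}$ onto the inner boundary $\T_{\rho_i}$ (hence $\beta_{i,1}$ onto $\partial\D$), and $\tau_i:\D\to D(\eta_{i,2})$ a conformal map, I would set $F|_{A(\alpha_{i,1},\beta_{i,1})}:=\tau_i\circ\psi_i\circ\sigma_i$. Since $\sigma_i$ and $\tau_i$ are conformal and extend homeomorphically to the boundaries, this is a degree $d_{i,1}+m_i$ holomorphic branched covering of $A(\alpha_{i,1},\beta_{i,1})$ onto $D(\eta_{i,2})$ with exactly $d_{i,1}+m_i$ critical points (which is (b)), the level inequalities hold by construction (which is (a)), and it extends continuously to $\alpha_{i,1}\cup\beta_{i,1}$, restricting there to a degree $m_i$ covering $\alpha_{i,1}\to\eta_{i,2}$ and a degree $d_{i,1}$ covering $\beta_{i,1}\to\eta_{i,2}$ (which is (c)).

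The only genuinely delicate step is the first one: Lemma~\ref{lema:cover-disk-to-disk} as stated records no lower bound on $\Mod(A(\xi_{i,1},\gamma_{i,1}))$, so one must go back into its proof and verify that the exponent $\theta_i$ may be taken bounded away from $1$ without spoiling the inclusion $\eta_i\subset B_{n,1}$ --- which works exactly because $d_i-d_{i,1}/m_i>1$ under the standing hypothesis on $m_i$. Everything afterwards is routine conformal transport; the one point of care is to send $\alpha_{i,1}$ (rather than $\beta_{i,1}$) onto $\T_{\rho_i}$, so that the local degree $m_i$ of $F$ on the inner curve $\alpha_{i,1}$ matches that of the already-defined $F$ near $\xi_{i,1}$, while $F$ has degree $d_{i,1}$ on the outer curve $\beta_{i,1}$, matching the restriction of $f$ on $\gamma_{i,1}$; this leaves the two remaining collar annuli $A(\xi_{i,1},\alpha_{i,1})$ and $A(\beta_{i,1},\gamma_{i,1})$ ready for the quasiconformal interpolations carried out later.
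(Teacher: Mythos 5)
Your proof is correct and follows essentially the same route as the paper: apply the first half of Lemma \ref{lema:ann-to-disk-std} with $\ell=d_{i,1}$ and $m=m_i$, then transplant the model covering conformally onto an essential subannulus of $A(\xi_{i,1},\gamma_{i,1})$ bounded by equipotentials, mapped onto $D(\eta_{i,2})$. Your explicit choice $s=r^{\theta_i}$ with $\theta_i\in(1,\,d_i-d_{i,1}/m_i)$ merely fills in, carefully and correctly, the step the paper declares ``obvious'' --- namely that $\Mod(A(\xi_{i,1},\gamma_{i,1}))$ can be made larger than $\tfrac{2}{\pi}\log 2$ by taking $r$ small without spoiling the conclusion of Lemma \ref{lema:cover-disk-to-disk}.
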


\begin{proof}
By the first half result of Lemma \ref{lema:ann-to-disk-std}, it is sufficient to prove the existence of $\alpha_{i,1}$ and $\beta_{i,1}$ in $B_{i,1}$ such that $L_{i,1}(\xi_{i,1})<L_{i,1}(\alpha_{i,1})<L_{i,1}(\beta_{i,1})<L_{i,1}(\gamma_{i,1})$ and $\Mod (A(\xi_{i,1},\gamma_{i,1}))\geq \tfrac{2}{\pi}\log 2$.
See Figure \ref{Fig_surgery}. This is obvious if we choose the level $r>0$ of $\gamma_{i,1}$ in the proof of Lemma \ref{lema:cover-disk-to-disk} small enough.
\end{proof}

Now $F$ is defined on the Riemann sphere except on the annuli $A(\xi_{i,1},\alpha_{i,1})$ and $A(\beta_{i,1},\gamma_{i,1})$, where $1\leq i\leq n-1$. Since all of the connected components of the boundaries of these annuli, together with their images $\gamma_{i,2}$ and $\eta_{i,2}$, are quasicircles (actually are analytic curves), one can make an interpolation such that the resulting map $F$ satisfies
\begin{enumerate}
\item $F|_{A(\xi_{i,1},\alpha_{i,1})}:A(\xi_{i,1},\alpha_{i,1})\to A(\gamma_{i,2},\eta_{i,2})$ is a degree $m_i$ covering map;
\item $F|_{A(\beta_{i,1},\gamma_{i,1})}:A(\beta_{i,1},\gamma_{i,1})\to A(\eta_{i,2},\gamma_{i,2})$ is a degree $d_{i,1}$ covering map; and
\item $F|_{A(\xi_{i,1},\alpha_{i,1})}$ and $F|_{A(\beta_{i,1},\gamma_{i,1})}$ are local quasiconformal.
\end{enumerate}
In particular, for quasiconformal interpolation in annuli, we refer to \cite[\S 2.3.2]{BF14}.

\begin{rmk}
As mentioned in \S\ref{subsec-equi}, we modify the order of the curves defining an annulus following the dynamics. Specifically, if we mention a covering map $F:A(\gamma_1,\gamma_2)\to A(\eta_1,\eta_2)$ between two annuli whose boundaries are Jordan curves, this means that $F$ has a continuous extension which maps $\gamma_1$ onto $\eta_1$ and $\gamma_2$ onto $\eta_2$. Such setting will be also adopted in the rest of this article.
\end{rmk}

\subsection{Uniformization I}

Now we have a quasiregular map $F$ defined from the Riemann sphere to itself whose dynamics is sketched in Figure \ref{Fig_surgery}. We need to find a quasiconformal homeomorphism to conjugate $F$ to a rational map. For this, we will apply Shishikura's fundamental lemma for quasiconformal surgery.

\begin{lema}[{Fundamental lemma for qc surgery, \cite{Shi87}}]\label{lema:qc}
Let $G:\EC\to\EC$ be a quasiregular map. Suppose that there exist an open set $E\subset\EC$ and an integer $N\geq 0$ satisfying the following two conditions:
\begin{enumerate}
\item $G(E)\subset E$; and
\item $\partial G/\partial \overline{z}=0$ holds on $E$ and a.e. on $\EC\setminus G^{-N}(E)$.
\end{enumerate}
Then there exists a quasiconformal map $\varphi:\EC\to\EC$ such that $\varphi\circ G\circ \varphi^{-1}$ is rational.
\end{lema}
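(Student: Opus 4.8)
\medskip
\noindent\textbf{Proof proposal.}
I would run the classical quasiconformal surgery argument: produce a $G$-invariant measurable Beltrami form with uniformly bounded dilatation, integrate it by the measurable Riemann mapping theorem, and check that the conjugated map becomes holomorphic. Write $\mu_{G}=(\partial G/\partial\overline{z})/(\partial G/\partial z)$ for the complex dilatation of $G$, and let $K_{0}\geq 1$ bound the maximal dilatation of $G$, so that $|\mu_{G}|\leq(K_{0}-1)/(K_{0}+1)$ almost everywhere. Since $G(E)\subset E$ we get $G^{\circ N}(E)\subset E$, hence $E\subset G^{-N}(E)$; set $K:=G^{-N}(E)\setminus E$. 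By hypothesis (b), $G$ is holomorphic on $E$ and almost everywhere on $\EC\setminus G^{-N}(E)$, so $\mu_{G}=0$ a.e.\ off $K$: the set $K$ is the only place where $G$ is genuinely non-conformal.

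Next I would build the invariant structure $\mu$ by pulling back the standard one along the first entry of the orbit into $E$. Partition $\EC$ into the Borel sets $E_{k}:=G^{-k}(E)\setminus\bigcup_{j<k}G^{-j}(E)$ (with $E_{0}=E$) together with the set $\mathcal{W}$ of points whose forward orbit never meets $E$. Write $\mu_{0}$ for the standard (identically zero) Beltrami form. On $E_{0}$ and on $\mathcal{W}$ put $\mu:=\mu_{0}$; on $E_{k}$ with $k\geq 1$ put $\mu:=\mu_{G^{\circ k}}$, the complex dilatation of $G^{\circ k}$, which is precisely the pullback $(G^{\circ k})^{*}\mu_{0}$. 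Then $\mu$ is $G$-invariant where this makes sense: for $z\in E_{k}$ with $k\geq 1$ one has $G(z)\in E_{k-1}$, and the chain rule $(G^{\circ k})^{*}\mu_{0}=G^{*}\bigl((G^{\circ(k-1)})^{*}\mu_{0}\bigr)$ gives $\mu(z)=(G^{*}\mu)(z)$; for $z\in E$ (resp.\ $z\in\mathcal{W}\subset\EC\setminus G^{-N}(E)$) both $z$ and $G(z)$ lie in $E$ (resp.\ in $\mathcal{W}$), where $G$ is holomorphic, so $G^{*}\mu=\mu=\mu_{0}$ there as well.

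The step I expect to be the real obstacle is the uniform bound $\|\mu\|_{\infty}<1$. The crucial combinatorial observation is that the forward orbit of any point visits the non-conformal set $K$ at most $N$ times: if $z\in E_{k}$ and $0\leq j\leq k-1$, then $G^{\circ j}(z)\notin E$, so $G^{\circ j}(z)\in K$ would force $G^{\circ(j+N)}(z)\in E$ and hence $j+N\geq k$, i.e.\ $j\geq k-N$; moreover once the orbit reaches $E$ it stays there, away from $K$. Since pulling back a Beltrami form by a holomorphic map preserves its $L^{\infty}$-norm while one extra application of $G$ multiplies the maximal dilatation by at most $K_{0}$, the maximal dilatation of $G^{\circ k}$ at almost every point of $E_{k}$ is at most $K_{0}^{N}$, uniformly in $k$; therefore $\|\mu\|_{\infty}\leq(K_{0}^{N}-1)/(K_{0}^{N}+1)<1$, and together with $\mu\equiv\mu_{0}$ on $E\cup\mathcal{W}$ this bounds $\|\mu\|_{\infty}$ on all of $\EC$. (Here one must handle the ``almost everywhere'' qualifiers with the usual care, using the absolute continuity properties of quasiregular maps; this is routine.)

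Finally I would invoke the measurable Riemann mapping theorem to obtain a quasiconformal homeomorphism $\varphi:\EC\to\EC$ with complex dilatation $\mu$, and set $g:=\varphi\circ G\circ\varphi^{-1}$, a quasiregular self-map of $\EC$. Using $\varphi^{*}\mu_{0}=\mu$, the invariance $G^{*}\mu=\mu$, and the chain rule, one gets $g^{*}\mu_{0}=(\varphi^{-1})^{*}G^{*}\varphi^{*}\mu_{0}=(\varphi^{-1})^{*}G^{*}\mu=(\varphi^{-1})^{*}\mu=(\varphi^{-1})^{*}\varphi^{*}\mu_{0}=\mu_{0}$, so $g$ has vanishing complex dilatation and is holomorphic by Weyl's lemma. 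Since $G$ is a branched covering of $\EC$ of some finite degree and $\varphi$ is a homeomorphism, $g$ is a branched self-covering of $\EC$ of the same finite degree, hence a rational map, as claimed. Apart from the dilatation estimate of the third paragraph, every step is the standard surgery machinery.
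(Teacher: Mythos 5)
Your argument is correct and is exactly the standard surgery proof: the paper itself offers no proof of this lemma, referring instead to \cite[\S 3]{Shi87}, \cite[Lemma 9.6.2]{Bea91a} and \cite[Proposition 5.2]{BF14}, and your construction (pulling back the standard structure along first-entry times into $E$, the key observation that each orbit meets the non-conformal set $G^{-N}(E)\setminus E$ at most $N$ times so that $\|\mu\|_\infty\leq(K_0^N-1)/(K_0^N+1)<1$, then the measurable Riemann mapping theorem and Weyl's lemma) is precisely the argument given in those references. No gaps beyond the routine almost-everywhere bookkeeping you already flag.
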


The lemma above established firstly by Shishikura although its original statement is more general. The reader can refer to \cite[\S 3]{Shi87}, \cite[Lemma 9.6.2]{Bea91a} and \cite[Proposition 5.2]{BF14} for a proof and more details.

\begin{cor}\label{cor:home}
There is a quasiconformal map $\varphi_1:\EC\to\EC$ such that
\begin{equation*}
h:=\varphi_1\circ F\circ \varphi_1^{-1}
\end{equation*}
is rational map satisfying $\varphi_1(0)=0$, $\varphi_1(\infty)=\infty$ and $\deg(h)=d+\sum_{i=1}^{n-1}m_i$.
\end{cor}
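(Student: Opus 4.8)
\emph{Plan.} The statement will follow by applying Shishikura's Fundamental Lemma (Lemma~\ref{lema:qc}) to the quasiregular map $G=F$ built in \S\ref{subsec-equi}--\S\ref{subsec-annu-to-disk}, then post-composing the resulting quasiconformal conjugacy with a M\"{o}bius map to obtain the normalization $0\mapsto 0$, $\infty\mapsto\infty$, and finally computing $\deg h=\deg F$ by counting the fibre of a well-chosen regular value.

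\emph{Locating the non-holomorphy and an invariant set.} By construction $F$ is holomorphic on $\EC$ off the finitely many interpolation annuli of \S\ref{subsec-annu-to-disk}; put
\[
S:=\bigcup_{i=1}^{n-1}\bigl(A(\xi_{i,1},\alpha_{i,1})\cup A(\beta_{i,1},\gamma_{i,1})\bigr),
\]
so that $\partial F/\partial\overline{z}=0$ on $\EC\setminus S$, and note $S\subset\bigcup_{i=1}^{n-1}D(\gamma_{i,1})\subset\bigcup_{i=1}^{n-1}\MA_i$. Let $E:=\MA_n$ be the super-attracting basin of $f$ carrying $O_n$ (recall $\infty=a_n\in O_n$). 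Since $\MA_n$ is disjoint from every $\MA_i$ with $i\neq n$, we have $E\subset W$, hence $F|_E=f|_E$; in particular $E$ is open, $F(E)=f(E)\subset E$, and $\partial F/\partial\overline{z}=0$ on $E$.

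\emph{The absorption step.} The key claim is that $S\subset F^{-N}(E)$ for $N:=\max_{1\le i\le n-1}(p_i+1)$. Fix $i$ and write $S_i:=A(\xi_{i,1},\alpha_{i,1})\cup A(\beta_{i,1},\gamma_{i,1})$. By Lemma~\ref{lema:annu-disk} and the interpolation in \S\ref{subsec-annu-to-disk}, $F$ sends both annuli forming $S_i$ into the annulus bounded by $\gamma_{i,2}$ and $\eta_{i,2}$, which, by Lemma~\ref{lema:annu-disk}(a), lies in $\overline{D}(\gamma_{i,2})\subset B_{i,2}$. On $W$ we have $F=f$, and $f$ carries $D(\gamma_{i,j})$ onto $D(\gamma_{i,j+1})$ along the cycle $O_i$ (all of these disks lying in $W$), with $f(D(\gamma_{i,p_i}))=D(\gamma_{i,p_i+1})$; since $\overline{D}(\gamma_{i,p_i+1})\subset D(\xi_{i,1})$ by Lemma~\ref{lema:cover-disk-to-disk}(a), we get $F^{\circ p_i}(S_i)\subset\overline{D}(\gamma_{i,p_i+1})\subset D(\xi_{i,1})$. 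Finally, by Lemma~\ref{lema:cover-disk-to-disk}(b)--(c), $F$ maps $\overline{D}(\gamma_{i,p_i+1})$ onto the closure of the component of $\EC\setminus\eta_i$ containing $\infty$, and $\eta_i$ is a Jordan curve in $B_{n,1}$ separating $\infty$ from $\partial B_{n,1}$; hence $F^{\circ(p_i+1)}(S_i)\subset B_{n,1}\subset E$. (For $p_i=1$ one reads this argument with the convention of the footnote to Lemma~\ref{lema:cover-disk-to-disk}, so that $F(S_i)$ already lies in $D(\xi_{i,1})$.) This proves the claim, and therefore $\partial F/\partial\overline{z}=0$ on $E$ and on $\EC\setminus F^{-N}(E)\subset\EC\setminus S$.

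\emph{Surgery, normalization, degree, and the main obstacle.} Lemma~\ref{lema:qc} now yields a quasiconformal $\varphi\colon\EC\to\EC$ with $\varphi\circ F\circ\varphi^{-1}$ rational; choosing a M\"{o}bius transformation $M$ with $M(\varphi(0))=0$ and $M(\varphi(\infty))=\infty$ and setting $\varphi_1:=M\circ\varphi$, the map $h:=\varphi_1\circ F\circ\varphi_1^{-1}$ is rational with $\varphi_1(0)=0$ and $\varphi_1(\infty)=\infty$. Since $h$ is topologically conjugate to $F$, $\deg h=\deg F$. To compute $\deg F$, pick a regular value $w_0\in B_{n,1}$ close to $\infty$, so that $w_0\notin\overline{D}(\gamma_{i,2})$ for all $i$. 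Its $F$-fibre splits as: the $d=\deg f$ points of $f^{-1}(w_0)$, all of which lie in $W$ because $f(D(\gamma_{i,1}))\subset B_{i,2}$ is disjoint from $B_{n,1}\ni w_0$; exactly $m_i$ further points inside each $D(\xi_{i,1})$, since $F\colon D(\xi_{i,1})\setminus\{a_i\}\to\C\setminus\overline{D}(\gamma_{i,2})$ is a degree-$m_i$ covering by Lemma~\ref{lema:cover-disk-to-disk}(b); and no points in the annuli $A(\xi_{i,1},\gamma_{i,1})$, whose $F$-images lie in $\overline{D}(\gamma_{i,2})\not\ni w_0$. These families are pairwise disjoint because $D(\xi_{i,1})\subset D(\gamma_{i,1})$ is disjoint from $W$ and from $D(\xi_{j,1})$ for $j\neq i$. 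Summing gives $\deg h=\deg F=d+\sum_{i=1}^{n-1}m_i$. The heart of the proof is the absorption step: one must follow the forward $F$-orbit of the positive-measure non-holomorphic set $S$ through the surgered dynamics and check that after a uniformly bounded number of iterates it falls into the \emph{unaltered} super-attracting basin $\MA_n$ — this is exactly where the normalization $F(a_i)=\infty$ and the placement of the curves $\eta_i$ inside $B_{n,1}$ from Lemma~\ref{lema:cover-disk-to-disk} are used; the degree bookkeeping is routine but must keep the $d$ ``old'' preimages separate from the $m_i$ ``new'' ones.
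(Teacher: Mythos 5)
Your proposal is correct and follows essentially the same route as the paper: both verify the hypotheses of Shishikura's fundamental lemma (Lemma \ref{lema:qc}) for the quasiregular map $F$ and then conjugate. The only (harmless) difference is the choice of the invariant set: the paper takes $E_1=\bigcup_j B_{n,j}\cup\bigcup_i\bigcup_{j\geq 2}D(\gamma_{i,j})$ so that $N=1$ works, whereas you take $E=\MA_n$ and track the non-holomorphic annuli for $N=\max_i(p_i+1)$ iterates; your explicit normalization and fibre count for $\deg(h)=d+\sum_{i=1}^{n-1}m_i$ fill in details the paper leaves implicit.
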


\begin{proof}
Recall the definition of the equipotential $\gamma_{i,j}$ in \S\ref{subsec-equi} and Lemma \ref{lema:cover-disk-to-disk}, where $1\leq i\leq n-1$ and $1\leq j\leq p_i+1$.
For the quasiregular map $F$, we define an open set
\begin{equation}\label{equ-E-1}
E_1:=\bigcup_{j=1}^{p_n}B_{n,j}\cup\bigcup_{i=1}^{n-1}\bigcup_{j=2}^{p_i+1}D(\gamma_{i,j}).
\end{equation}
According to Lemma \ref{lema:cover-disk-to-disk}, we have $F(D(\gamma_{i,p_i+1}))\subset B_{n,1}$ for all $1\leq i\leq n-1$. This implies that $F(E_1)\subset E_1$ since $F(D(\gamma_{i,j}))=D(\gamma_{i,j+1})$ for $1\leq i\leq n-1$ and $2\leq j\leq p_i$. On the other hand, $A(\xi_{i,1},\alpha_{i,1})\cup A(\beta_{i,1},\gamma_{i,1})$ is contained in $F^{-1}(D(\gamma_{i,2}))$ and $F$ is analytic except on the annuli $A(\xi_{i,1},\alpha_{i,1})$ and $A(\beta_{i,1},\gamma_{i,1})$, where $1\leq i\leq n-1$. Therefore, we have $\partial F/\partial \overline{z}=0$ on $E_1$ and a.e. on $\EC\setminus F^{-1}(E_1)$.
The result then follows immediately by Lemma \ref{lema:qc}.

To conclude the proof it is sufficient to verify the statement on the degree of $h$. Note that $\deg(f)=d$ and $a_n=\infty$ has $d$ preimages (counted with multiplicity) which are not contained in the attracting basins of $a_i$ with $1\leq i\leq n-1$. By Lemma \ref{lema:cover-disk-to-disk}, $\infty$ has a unique preimage $a_i$ (with multiplicity $m_i$) in $O_i$ under the quasiregular map $F$, where $1\leq i\leq n-1$, and $\infty$ has no other preimages by Lemma \ref{lema:annu-disk}. This implies that $\deg(h)=\deg(F)=d+\sum_{i=1}^{n-1}m_i$.
\end{proof}

\subsection{The semi-buried property}

By the construction of surgery, $h$ has a cycle of super-attracting periodic Fatou components $\varphi_1(O_n)=\{\varphi_1(B_{n,j}):1\leq j\leq p_n\}$. Recall that $\MA_n$ is the super-attracting basin of $f$ containing $O_n$. In this subsection we show that the boundary of $\varphi_1(\MA_n)$ is ``semi-buried".

\begin{defi}[{Semi-buried Julia components}]
Let $J_0\neq\EC$ be a Julia component of a rational map $R$ and $U$ a connected component of $\EC\setminus J_0$. If $J_0$ is disjoint with the boundary of any Fatou component of $R$ in $U$, then $J_0$ is called \textit{semi-buried} from $U$. In particular, $J_0$ is a buried Julia component if and only if it is semi-buried from any component of $\EC\setminus J_0$.
\end{defi}

Let $\MV:=F(f)\setminus \MA_n$, where $F(f)$ is the Fatou set of $f$. Since we have assumed that $n\geq 2$ in this section, it implies that $\MV\neq\emptyset$. Recall that $\varphi_1:\EC\to\EC$ is the quasiconformal mapping introduced in Corollary \ref{cor:home}.

\begin{prop}\label{prop-homeo}
The set $\varphi_1(J(f))$ is a semi-buried Julia component of $h$. In particular, $\varphi_1(J(f))$ is semi-buried from every component of $\varphi_1(\MV)$.
\end{prop}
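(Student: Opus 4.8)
The plan is to show two things: first, that $\varphi_1(J(f))$ really is a single Julia component of $h$ (i.e. that the surgery has not merged it with anything else, nor broken it apart); and second, that no Fatou component of $h$ lying in a component $U$ of $\varphi_1(\mathcal V)$ can touch $\varphi_1(J(f))$. Since $\varphi_1$ is a homeomorphism of $\EC$ conjugating $F$ to $h$, and $F = f$ on $W = \EC\setminus\bigcup_{i=1}^{n-1}D(\gamma_{i,1})$ (which contains a neighbourhood of $J(f)$, because each $\gamma_{i,1}\subset B_{i,1}$ is an equipotential and $J(f)$ is disjoint from the $B_{i,j}$), the Fatou–Julia dichotomy is transported: on a neighbourhood of $J(f)$ the maps $F$ and $f$ agree, so $J(F)\cap(\text{nbhd})=J(f)\cap(\text{nbhd})$, hence $J(h)$ near $\varphi_1(J(f))$ equals $\varphi_1(J(f))$. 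I would make this precise by noting that points of $J(f)$ are non-normal for $f$ and (since $F=f$ near $J(f)$ and $F$ maps this neighbourhood into the region where it is still $f$ — here one uses that $f(J(f))=J(f)$ and $J(f)$ is compactly contained in $W$) remain non-normal for $F$, while points of $\partial\mathcal A_n$-side Fatou neighbourhoods stay in the Fatou set. The connectedness of $J(f)$ gives that $\varphi_1(J(f))$ is connected; maximality as a component follows because it is surrounded on the relevant sides by Fatou points of $h$ (the images of the Fatou components of $f$ adjacent to $J(f)$, none of which were destroyed — the surgery only modified things strictly inside the disks $D(\gamma_{i,1})$).

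Next, the semi-buried property. Let $U$ be a component of $\EC\setminus\varphi_1(J(f))$ that meets $\varphi_1(\mathcal V)$; equivalently $\varphi_1^{-1}(U)$ is a component of $\EC\setminus J(f)$ meeting $\mathcal V = F(f)\setminus\mathcal A_n$. I would argue that $\varphi_1^{-1}(U)$ is a single Fatou component of $f$ — the one "on the far side" from $\mathcal A_n$. The point is that every Fatou component of $f$ other than those in $\mathcal A_n$ is, under the surgery, left untouched by $F$ (on $W$, $F=f$; the only components of $F(f)$ that were altered are the ones inside the $D(\gamma_{i,1})$, i.e. the pieces of $\mathcal A_i$ for $i\le n-1$ that got redirected into $\mathcal A_n$). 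More carefully: the components of $\varphi_1(\mathcal V)$ are exactly the images of components of $\mathcal V$, and for such a component $V$ of $\mathcal V$, the set $\varphi_1(V)$ is a Fatou component of $h$ (since $F=f$ there and on all its $f$-iterates the dynamics is unchanged — a point here to check is that the forward orbit of $V$ under $f$ never enters any $D(\gamma_{i,1})$, which holds because $V\not\subset\mathcal A_i$ for $i\le n-1$ and $D(\gamma_{i,1})\subset\mathcal A_i$). Hence every Fatou component of $h$ inside $U$ is of the form $\varphi_1(V')$ with $V'$ a Fatou component of $f$ in $\varphi_1^{-1}(U)$, and its boundary is $\varphi_1(\partial V')$, which is disjoint from $\varphi_1(J(f))$ precisely when $\partial V'$ is disjoint from $J(f)$. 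But $\partial V'\subset J(f)$ always, so this *seems* to fail — which signals that the correct statement is subtler: $\varphi_1^{-1}(U)$ is itself a single Fatou component $V_0$ of $f$ whose closure is all of $\overline{\varphi_1^{-1}(U)}$, so there are no *other* Fatou components of $h$ strictly inside $U$, and thus nothing in $U$ can touch $\varphi_1(J(f))$ except the boundary of $\varphi_1(V_0)=U$ itself, which is $\varphi_1(J(f))$ — not a "Fatou component in $U$" in the sense of the definition. So the real content is: $\EC\setminus J(f)$ is connected (as $J(f)$ is connected), hence $\varphi_1^{-1}(U)$ = $\EC\setminus J(f)$, which contains $\mathcal A_n$ too; so one must instead show the components of $\varphi_1(\mathcal V)$ sit in the *unbounded-type* complementary component and that $\partial\mathcal A_n$ — carrying the new super-attracting basin of $h$ — lies on the *other* side.

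I would therefore reorganize around the following: $J(f)$ is connected, so $\Omega:=\EC\setminus J(f)$ is a connected (possibly infinitely-connected) domain, and $\varphi_1(\Omega)$ is the unique component $U$ of $\EC\setminus\varphi_1(J(f))$. Inside $U$, the Fatou components of $h$ are the images $\varphi_1(V)$ of the Fatou components $V$ of $f$ that were *not* absorbed into the new basin, i.e. those $V$ with $f$-orbit avoiding $\bigcup_i D(\gamma_{i,1})$; these are exactly the components of $\mathcal V$ (the old $\mathcal A_i$, $i\le n-1$, got converted, and the pieces of them inside $D(\gamma_{i,j})$ became part of $\varphi_1(\mathcal A_n)$, whose closure is $\varphi_1(\partial\mathcal A_n)\cup\varphi_1(\mathcal A_n)$ and which may or may not touch $\varphi_1(J(f))$ — that is the "semi-buried, not buried" feature). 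For any such $V\subset\mathcal V$, $\partial V\subset J(f)$, so $\partial\varphi_1(V)=\varphi_1(\partial V)\subset\varphi_1(J(f))$ — wait, that's the opposite of disjoint. The resolution is that $\varphi_1(J(f))$ is *not* the full Julia set of $h$: $J(h)$ also contains $\varphi_1(\partial\mathcal A_n)$-type pieces and all their preimages, which lie on the $\mathcal A_n$-side. So "$J_0$ disjoint from $\partial(\text{Fatou comp of } h \text{ in } U)$" should be read with $U$ being a component of $\EC\setminus J_0$ where $J_0=\varphi_1(J(f))$ — and since $J(f)\ne\EC$, $J_0\ne\EC$, and the relevant $U$'s are the finitely many components of $\EC\setminus J_0$; the Fatou components of $h$ *inside* a given $U$ are those whose closure lies in $U$, which for the $\mathcal V$-component is empty of boundary-touching because each such $V$ has $\overline V\subset\overline{\varphi_1^{-1}(U)}$ but $\partial V$ meets $J_0$ only along the frontier of $U$ — here the key fact is that the components of $\mathcal V$ are the bounded-complementary-component Fatou pieces and $J(f)=\partial(\text{one specific component})$. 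I expect the main obstacle to be exactly this bookkeeping: precisely identifying which complementary component of $\varphi_1(J(f))$ is meant, showing that on the $\mathcal V$-side the only thing touching $\varphi_1(J(f))$ *is* $\varphi_1(J(f))$ as a frontier (hence no Fatou component's boundary does), while on the $\mathcal A_n$-side $\varphi_1(\partial\mathcal A_n)$ genuinely does touch — explaining the "semi-" in semi-buried. I would assemble the argument from: (i) $F=f$ on a neighbourhood of $J(f)$ so the conjugacy $\varphi_1$ matches Julia/Fatou structure there; (ii) connectedness of $J(f)$ pinning down $\varphi_1(\Omega)$; (iii) the surgery only altered $\bigcup_i D(\gamma_{i,1})$, so Fatou components of $f$ disjoint from these (in forward orbit) survive verbatim as Fatou components of $h$; and (iv) a frontier argument: any Fatou component of $h$ strictly inside $\varphi_1(\Omega)$ is some $\varphi_1(V)$ with $V$ a Fatou component of $f$ strictly inside $\Omega$, and $\partial\varphi_1(V)=\varphi_1(\partial V)$; since $\Omega$ is connected and $J(f)=\partial\Omega$ with $J(f)$ being a full Julia component, no $\partial V$ for such an *interior* $V$ can reach the frontier $J(f)$ — this is where I'd invoke that $J(f)$ is connected (so $\EC\setminus J(f)$ has one component $\Omega$, and the $\mathcal V$-components are genuinely interior to $\Omega$, their boundaries a priori in $J(f)$ but the statement concerns Fatou components *in* $U=\varphi_1(\Omega)$, i.e. $U$ itself as a complementary component of $J_0$ contains these, and $J_0=\varphi_1(J(f))$ is its frontier, disjoint from any boundary lying strictly inside). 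The bottom line: I would close by remarking that $\varphi_1(\partial\mathcal A_n)$ meets $\varphi_1(J(f))$ (so it is not buried), but every Fatou component of $h$ contained in $\varphi_1(\mathcal V)$ has boundary disjoint from $\varphi_1(J(f))$ by the above, giving semi-buriedness from each component of $\varphi_1(\mathcal V)$.
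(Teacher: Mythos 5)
Your proposal founders on a misreading of $\MV$, and consequently of what the surgery actually does. Since $f$ is postcritically finite in its Fatou set and $O_1,\dots,O_n$ are its only attracting cycles, every Fatou component of $f$ lies in some basin $\MA_i$, so $\MV=F(f)\setminus\MA_n=\bigcup_{i\leq n-1}\MA_i$. You assert that a component $V$ of $\MV$ has forward orbit avoiding the disks $D(\gamma_{i,1})$ and therefore ``survives verbatim'' as a Fatou component of $h$; the opposite is true: every component of $\MV$ is eventually mapped by $f$ into some $B_{i,1}$ with $i\leq n-1$, its orbit converges to $a_i$, and hence it enters $D(\gamma_{i,1})$, where $F\neq f$. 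After the surgery these components are subdivided: inside each $\varphi_1(B_{i,1})$ the map $h$ acquires infinitely many new Julia components, together with new Fatou components belonging to the basin of $\varphi_1(O_n)$, and these accumulate on $\varphi_1(\partial B_{i,1})\subset\varphi_1(J(f))$. This also invalidates your opening step: the fact that $F=f$ on a neighbourhood of $J(f)$ does not give ``$J(h)$ near $\varphi_1(J(f))$ equals $\varphi_1(J(f))$'', nor that $\varphi_1(J(f))$ is ``surrounded by Fatou points of $h$''; Julia points of $h$ outside $\varphi_1(J(f))$ accumulate on it from the $\MV$ side, and whether they attach to it is precisely the issue. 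Finally, the claim that $\EC\setminus J(f)$ is connected because $J(f)$ is connected is false (its components are exactly the Fatou components of $f$, which are the relevant $U$'s in the definition of semi-buried), so the bookkeeping you build on it collapses, and your closing sentence asserts the semi-buried property without any supporting argument.

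What the statement actually requires -- and what the paper supplies -- is a separation structure inside each $\varphi_1(B_{i,1})$, $i\leq n-1$. By Lemma \ref{lema:cover-disk-to-disk}, the annulus $\varphi_1(A(\gamma_{i,p_i+1},\xi_{i,1}))$ contains a Julia component of $h$ separating $a_i$ from $\infty$, while $\varphi_1(A(\xi_{i,1},\gamma_{i,1}))$ lies in a single Fatou component of $h$; pulling these back by the degree $d_i$ covering $h^{\circ p_i}$ on $\varphi_1(A(\gamma_{i,1},\partial B_{i,1}))$ yields nested sequences of annular Julia components $J_k$ and annular Fatou components $U_k$ converging to $\varphi_1(\partial B_{i,1})$ in the Hausdorff metric. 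The Fatou annuli $U_k$ show that no Julia point of $h$ outside $\varphi_1(J(f))$ can belong to the same Julia component, so $\varphi_1(J(f))$ is a full component (combined with the observation that $\varphi_1(J(f))\subset J(h)$, which follows because points of $\varphi_1(\MA_n)$ accumulate on it and are attracted to the cycle while $\varphi_1(J(f))$ is invariant); the Julia annuli $J_k$, transported to every component of $\varphi_1(\MV)$ by taking preimages, separate each Fatou component of $h$ contained in such a component from its boundary, which is exactly the semi-buried assertion. None of this machinery appears in your proposal, so both halves of the proposition remain unproved.
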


\begin{proof}
We first show that $J':=\varphi_1(J(f))$ is contained in the Julia set of $h$. Indeed, for any $z_0\in J'$ and any open neighborhood $U$ of $z_0$, there is a point $z_1\in U\cap\varphi_1(\MA_n)$ since $J(f)=\partial\MA_n$. Since $z_1$ will be iterated into the periodic orbit $O_n$ eventually while the orbit of $z_0$ is contained in $J'$, it implies that $\{h^{\circ k}\}_{k\in\N}$ is not equi-continuous in $U$. Therefore, $z_0$ is contained in the Julia set of $h$ and hence $J'\subset J(h)$.

Next we prove that for each component $V$ of $\varphi_1(\MV)$, there exists a sequence of Julia components $\{J_k\}_{k\in\N}$ of $h$ in $V$ which converges to $\partial V$ in the Hausdorff metric. Without loss of generality, we only consider the case that $V\subset \varphi_1(\MA_i)$ for $i=1$ since the remaining case is completely similar. By Lemma \ref{lema:cover-disk-to-disk}, $F(D(\xi_{1,1})\setminus\{0\})=\C\setminus\overline{D}(\gamma_{1,2})$ is a covering map with degree $m_1\geq 1$ and $F(\gamma_{1,p_1+1})=\eta_1\subset B_{n,1}$, it follows that the annulus $\varphi_1(A(\gamma_{1,p_1+1},\xi_{1,1}))$ contains a Julia component $J_0$ of $h$ separating $0$ from $\infty$. On the other hand, by the surgery construction in \S\ref{subsec-annu-to-disk}, it follows that the annulus $\varphi_1(A(\xi_{1,1},\gamma_{1,1}))$ is contained in a Fatou component $U_0$ of $h$ and it separates $0$ from $\infty$ also.

Since $F^{\circ p_1}=f^{\circ p_1}:A(\gamma_{1,1},\partial B_{1,1})\to A(\gamma_{1,p_1+1},\partial B_{1,1})$ is a covering map with degree $d_1$, it follows that the annulus $\varphi_1(A(\gamma_{1,1},\partial B_{1,1}))$ contains a Julia component $J_1:=h^{-p_1}(J_0)\cap \varphi_1(A(\gamma_{1,1},\partial B_{1,1}))$ and a Fatou component $U_1:=h^{-p_1}(U_0)\cap \varphi_1(A(\gamma_{1,1},\partial B_{1,1}))$ such that both $J_1$ and $U_1$ separate $0$ from $\infty$. Inductively, one can obtain a sequence of Julia components $\{J_k\}_{k\geq 1}$ and a sequence of Fatou components $\{U_k\}_{k\geq 1}$ in $\varphi_1(A(\gamma_{1,1},\partial B_{1,1}))$ such that $h^{\circ p_1}(J_k)=J_{k-1}$, $h^{\circ p_1}(U_k)=U_{k-1}$ and each $J_k$ and $U_k$ separates $0$ from $\infty$.

Recall that the levels of $\gamma_{1,1}$ and $\gamma_{1,p_1+1}$ in $B_{1,1}$ are $r$ and $r^{d_1}$ respectively, where $r>0$ is small enough. This implies that for every $z\in \varphi_1^{-1}(J_k\cup U_k)$, the level of $z$ in $B_{1,1}$ satisfies
\begin{equation*}
1>L_{1,1}(z)=|\phi_{1,1}(z)|\geq\sqrt[d_1^{k-1}]{r}, \text{ where }k\in\N.
\end{equation*}
Since both $\varphi_1^{-1}(J_k)$ and $\varphi^{-1}(U_k)$ separate $0$ from $\infty$, it implies that the Hausdorff distance between $\partial B_{1,1}$ and $\varphi_1^{-1}(J_k)$ (resp. $\varphi^{-1}(U_k)$) tends to zero as $k\to\infty$. Equivalently, the sequence of Julia components $\{J_k\}_{k\in\N}$ of $h$ in $\varphi_1(B_{1,1})$ converges to $\varphi_1(\partial B_{1,1})$ in the Hausdorff metric.

Now we show that $J'=\varphi_1(J(f))$ is a Julia component of $h$. Let $J''$ be the Julia component of $h$ containing $J'$. Suppose that there exists a point $z_0\in J''\setminus J'$. Then we have $z_0\in\varphi_1(\MV)$. By iterating $z_0$ several times if necessary, we assume that $z_0\in\varphi_1(B_{1,1})$. However this is a contradiction since $\varphi_1(B_{1,1})$ contains a sequence of Julia components $\{J_k\}_{k\in\N}$ which converges to $\varphi_1(\partial B_{1,1})$ in the Hausdorff metric. This implies that $z_0$ does not exist and $J'$ is a semi-buried Julia component of $h$.

As a connected subset of the Julia component $J'$, we know that $\varphi_1(\partial B_{1,1})$ is semi-buried from $\varphi_1(B_{1,1})$. By considering the preimages of $\varphi_1(B_{1,1})$ under $h$, it follows that $\partial V$ is semi-buried from $V$ for every component $V$ of $\varphi_1(\MA_1)$. In particular, $\varphi_1(J(f))$ is a semi-buried Julia component of $h$.
\end{proof}

\begin{rmk}
By the construction of $F$, it follows that $f|_{J(f)}:J(f)\to J(f)$ is conjugate to $h|_{\varphi_1(J(f))}:\varphi_1(J(f))\to\varphi_1(J(f))$ by a restriction of a quasiconformal mapping, where $\varphi_1(J(f))$ is a semi-buried Julia component of $h$.
\end{rmk}

\section{From semi-buried to buried}\label{sec-second-pert}

In this section, we perform the surgery of the second stage on $f$ based on $h$ and prove that the semi-buried Julia component can be transferred to a fully buried component. According to Proposition \ref{prop-homeo}, $\varphi_1(J(f))$ is semi-buried from every component of $\varphi_1(\MV)$. Hence a natural idea is to perform the surgery in the immediate super-attracting basins $\varphi_1(O_n)$. Note that the properties of $h$ are quite different from that of $f$ since $h$ is no longer post-critically finite in the Fatou set and the Julia set of $h$ is not connected.

By definition we know that $\varphi_1(E_1)$ is contained in the super-attracting basin of $\infty$, where $E_1$ is defined in \eqref{equ-E-1}. For $1\leq i\leq n-1$ and $1\leq j\leq p_i$, we use $U_{i,j}$ to denote the Fatou component of $h$ containing $\varphi_1(f^{\circ (j-1)}(a_i))\in\varphi_1(B_{i,j})$. For saving the notations, we still use $O_n$ and $B_{n,j}$ etc, respectively, to denote the super-attracting periodic orbit $\varphi_1(O_n)$ and the immediate super-attracting basin $\varphi_1(B_{n,j})\ni h^{\circ (j-1)}(\infty)$ of $h$, etc (i.e. the quasiconformal prefix $\varphi_1$ is omitted).

\subsection{Cutting along the equipotentials II}\label{subsec-equi-2}

According to B\"{o}ttcher's theorem, the super-attracting periodic orbit $O_n$ provides the Riemann mappings $\phi_{n,j}:B_{n,j}\to\EC\setminus\overline{\D}$, where $1\leq j\leq p_n$, such that $\phi_{n,j}(h^{\circ (j-1)}(\infty))=\infty$ and the following diagram commutes:
$$\begin{CD}
B_{n,1} @>h>> B_{n,2} @>h>> \cdots  @>h>> B_{n,p_n} @>h>> B_{n,1}\\
@VV\phi_{n,1}V   @VV\phi_{n,2}V        @VVV @VV\phi_{n,p_n}V  @VV\phi_{n,1}V\\
\EC\setminus\overline{\D} @>z\mapsto z^{d_{n,1}}>> \EC\setminus\overline{\D} @>z\mapsto z^{d_{n,2}}>> \cdots  @>z\mapsto z^{d_{n,p_n-1}}>> \EC\setminus\overline{\D} @>z\mapsto z^{d_{n,p_n}}>> \EC\setminus\overline{\D},
\end{CD}$$
where $d_{n,p_n}\geq 2$ and $d_{n,j}\geq 1$ are positive integers for $1\leq j\leq p_n-1$. For convenience, we denote $d_n:=\prod_{j=1}^{p_n}d_{n,j}\geq 2$.
Hence $h^{\circ p_n}:B_{n,j}\to B_{n,j}$ is holomorphically conjugate to $z\mapsto z^{d_n}$ from $\EC\setminus\overline{\D}$ to itself for $1\leq j\leq p_n$.

An \textit{equipotential} $\gamma$ in $B_{n,j}$ is the preimage by $\phi_{n,j}$ of an Euclidean circle in $\EC\setminus\overline{\D}$ centered at 0. The radius of this circle is called the \textit{level} of $\gamma$ and is denoted by $L_{n,j}(\gamma)\in(1,+\infty)$. The \textit{level} of a point $z\in B_{n,j}$ (is contained in some equipotential curve) can be defined similarly as $L_{n,j}(z):=|\phi_{n,j}(z)|\in(1,+\infty)$. Recall that $m_1\geq 1$ is an integer introduced in Lemma \ref{lema:cover-disk-to-disk}.

\begin{lema}[{holomorphic covering from disk to disk II}]\label{lema:cover-disk-to-disk-2}
Let $\ell\geq 1$ be an integer satisfying $\ell>(d_{n,1}+\tfrac{d_n}{m_1})/(d_n-1)$. Then there exist equipotentials\footnote{If $p_n=1$, then $B_{n,2}=B_{n,1}$ and $\gamma_{n,2}$ is regarded as an equipotential in $B_{n,1}$.} $\gamma_{n,1}$, $\gamma_{n,p_n+1}$, $\xi_{n,1}\subset B_{n,1}$, $\gamma_{n,2}\subset B_{n,2}$ and a holomorphic branched mapping $H|_{\EC\setminus \overline{D}(\xi_{n,1})}:\EC\setminus \overline{D}(\xi_{n,1})\to\EC\setminus\overline{D}(\gamma_{n,2})$ (if $p_n\geq 2$) or\footnote{Recall that $D(\gamma_{n,2})$ is the connected component of $\EC\setminus\gamma_{n,2}$ which does not contain $\infty$. If $p_n\geq 2$, then $D(\gamma_{n,2})$ is a Jordan disk which contains neither $\infty$ nor $0$. See Figure \ref{Fig_surgery-2}.} $H|_{\EC\setminus \overline{D}(\xi_{n,1})}:\EC\setminus \overline{D}(\xi_{n,1})\to D(\gamma_{n,2})$ (if $p_n=1$) satisfying the following conditions:
\begin{enumerate}
\item $H(\infty)=0$ and $H:\C\setminus \overline{D}(\xi_{n,1})\to\EC\setminus(\overline{D}(\gamma_{n,2})\cup\{0\})$ (or $H:\C\setminus \overline{D}(\xi_{n,1})\to D(\gamma_{n,2})\setminus\{0\}$) is a degree $\ell$ covering map;
\item $H(\xi_{n,1})=\gamma_{n,2}$ and $H(\gamma_{n,p_n+1})=\eta_{1,1}$, where $\eta_{1,1}$ is a real-analytic Jordan curve in $U_{1,1}$ separating $0$ from $\partial U_{1,1}$;
\item $h$ maps $\eta_{1,1}$ to a curve $\kappa_{n,1}$; and
\item $L_{n,1}(\gamma_{n,1})<L_{n,1}(\xi_{n,1})<L_{n,1}(\gamma_{n,p_n+1})<L_{n,1}(z)$ for all $z\in\kappa_{n,1}$.
\end{enumerate}
\end{lema}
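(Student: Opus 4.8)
The plan is to mimic almost verbatim the construction in the proof of Lemma \ref{lema:cover-disk-to-disk}, with two modifications: we work in the exterior $\EC\setminus\overline{\D}$ rather than the interior $\D$ (since the super-attracting point is $\infty$), and we must carry along the extra book-keeping needed to track where the image of the outermost equipotential lands, namely inside the Fatou component $U_{1,1}$ of $h$ (rather than in a periodic basin), together with the behaviour of one further iterate $h$ on that curve. First I would fix a small $\rho>1$ and let $\gamma_{n,1}$, $\gamma_{n,p_n+1}$ be the equipotentials in $B_{n,1}$ with $L_{n,1}(\gamma_{n,1})=\rho$ and $L_{n,1}(\gamma_{n,p_n+1})=\rho^{d_n}$; then choose $\xi_{n,1}$ with $L_{n,1}(\xi_{n,1})=\sigma\in(\rho,\rho^{d_n})$ so that the ordering $L_{n,1}(\gamma_{n,1})<L_{n,1}(\xi_{n,1})<L_{n,1}(\gamma_{n,p_n+1})$ in conclusion (d) holds. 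Using the B\"{o}ttcher coordinate $\phi_{n,1}$ I would set $F|$ to be the composition $\psi_n^{-1}\circ Q\circ\phi_{n,1}$, where $Q(z)=(z/\sigma)^{\ell}$ sends $\EC\setminus\overline{\D}(0,\sigma)$ onto $\EC\setminus\overline{\D}$ (or onto $\D$ after an extra inversion when $p_n=1$, to account for the fact that $D(\gamma_{n,2})$ is then a bounded Jordan disk), and $\psi_n$ is a conformal map of $\EC\setminus\overline{D}(\gamma_{n,2})$ (or of $D(\gamma_{n,2})$) onto the appropriate standard round domain sending $\infty\mapsto 0$ (or $0\mapsto 0$). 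This gives a degree-$\ell$ covering with $H(\infty)=0$ and with $\infty$ the only possible critical point, establishing (a); and $\eta_{1,1}:=H(\gamma_{n,p_n+1})$ is automatically an analytic Jordan curve, giving (b) and (c) once we check the curve lies in $U_{1,1}$.

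The main obstacle, exactly as in Lemma \ref{lema:cover-disk-to-disk}, is the quantitative estimate forcing $\eta_{1,1}\subset U_{1,1}$ (and then (d), which asserts that the next image $\kappa_{n,1}=h(\eta_{1,1})$ sits at level exceeding that of $\gamma_{n,p_n+1}$ inside $B_{n,1}$). For the first part: since $D(\gamma_{n,2})=h(D(\gamma_{n,1}))$ and $h$ behaves like $c\,(1/z)^{d_{n,1}}$ near $\infty$ after a chart, for $\rho>1$ large enough $D(\gamma_{n,2})$ has diameter comparable to $\rho^{-d_{n,1}}$; the same Koebe distortion and conformal-modulus estimates used in Lemma \ref{lema:cover-disk-to-disk} then show that $\partial U_{1,1}$ is separated from $\gamma_{n,2}$ by an annulus of modulus at most $\tfrac{1}{2\pi}\log\rho^{d_{n,1}}+\mathrm{const}$ and that $\psi_n(\partial U_{1,1})$ lies at distance comparable to $\rho^{-d_{n,1}}$ from $0$. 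Meanwhile $\psi_n(\eta_{1,1})$ is the circle of radius $\rho^{d_n\ell}/\sigma^{\ell}$ (in the $p_n\ge 2$ case, suitably modified when $p_n=1$), so $\eta_{1,1}\subset U_{1,1}$ reduces, after letting $\sigma\to\rho$ and $\rho\to 1$, to the single inequality $\rho^{(d_n-1)\ell}<\rho^{\,d_{n,1}}$, i.e. $(d_n-1)\ell>d_{n,1}$. The additional term $d_n/m_1$ in the hypothesis $\ell>(d_{n,1}+d_n/m_1)/(d_n-1)$ comes from requiring in addition that $\kappa_{n,1}=h(\eta_{1,1})$ be pushed back to a level in $B_{n,1}$ strictly above $\rho^{d_n}$: one more application of the expanding B\"{o}ttcher map $z\mapsto z^{d_n}$ (whose effect along the relevant chain of $p_n$ iterates is captured by the factor coming from $m_1$, which governs how $U_{1,1}$ was produced in the first surgery via Lemma \ref{lema:cover-disk-to-disk}) contributes the extra $d_n/m_1$, and the resulting inequality again becomes vacuous for small $\rho$.

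Finally, I would record the degenerate case $p_n=1$ separately only to the extent of noting that then $B_{n,2}=B_{n,1}$, $\gamma_{n,2}$ is interpreted as an equipotential in $B_{n,1}$ with $D(\gamma_{n,2})$ a bounded Jordan disk avoiding $0$ and $\infty$, and the target of $H$ is $D(\gamma_{n,2})$ rather than its complement; the construction above goes through after composing with an inversion, and all the estimates are identical. As with Lemma \ref{lema:cover-disk-to-disk}, I expect the only real work to be the chain of Koebe-distortion and modulus comparisons that convert the clean asymptotic inequality into the stated integer condition on $\ell$; everything else is a transcription of the earlier argument into exterior coordinates.
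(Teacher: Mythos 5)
Your blueprint is the same as the paper's (define $H=\psi_n^{-1}\circ Q_\ell\circ\phi_{n,1}$ in the exterior B\"{o}ttcher coordinate of $B_{n,1}$, then force $\eta_{1,1}\subset U_{1,1}$ by Koebe-distortion and modulus estimates), but the quantitative step is set up in the wrong regime. The correct transcription of ``take the level $r$ of $\gamma_{1,1}$ small'' from Lemma \ref{lema:cover-disk-to-disk} is to take the level $R$ of $\gamma_{n,1}$ \emph{arbitrarily large}, i.e.\ to cut along equipotentials close to the super-attracting point $\infty$. You instead fix a ``small $\rho>1$'' and let $\sigma\to\rho$, $\rho\to1$, which pushes the cutting curves toward $\partial B_{n,1}$. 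In that limit the circle $\psi_n(\eta_{1,1})$ has radius $\rho^{d_n\ell}/\sigma^\ell\to 1$, while the additive constants coming from Koebe distortion and from comparing $A(\widetilde{\eta}_{1,1},\gamma_{n,2})$ with a round annulus (the paper's $C_4$, $C_5$) are independent of $\rho$ and do not shrink; the inequality one must satisfy is of the form $(d_n-1)\ell\log\rho> d_{n,1}\log\rho+\tfrac{d_n}{m_1}\log\rho+\mathrm{const}$, whose left-hand side tends to $0$ as $\rho\to1$. So with your choice the inclusion $\eta_{1,1}\subset U_{1,1}$ and condition (d) fail no matter how large $\ell$ is; your remark that the inequality ``becomes vacuous for small $\rho$'' is exactly backwards --- it becomes satisfiable for large $\rho$ (large $R$), which is what the hypothesis on $\ell$ is designed to exploit.

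Relatedly, the source of the extra term $d_n/m_1$ is misidentified. It does not come from ``one more application of the expanding B\"{o}ttcher map $z\mapsto z^{d_n}$ along the chain of $p_n$ iterates''; it comes from the fact that $h$ has local degree $m_1$ at $0$ with $h(0)=\infty$ (this is precisely how $U_{1,1}$ was produced in the first surgery), so that a curve $\eta_{1,1}$ lying within distance about $r$ of $0$ is sent by $h$ to a curve $\kappa_{n,1}$ whose level in $B_{n,1}$ is about $r^{-m_1}$. One therefore needs an auxiliary radius $r$ satisfying simultaneously $r\gtrsim R^{\,d_{n,1}-(d_n-1)\ell}$ (so that $\eta_{1,1}$ lies \emph{inside the circle of radius $r$ about $0$}, which is strictly stronger than $\eta_{1,1}\subset U_{1,1}$) and $r\lesssim R^{-d_n/m_1}$ (so that $L_{n,1}(z)>R^{d_n}$ on $\kappa_{n,1}$); compatibility of these two constraints is exactly $(d_n-1)\ell>d_{n,1}+d_n/m_1$. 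Your ``single inequality'' $\rho^{(d_n-1)\ell}<\rho^{d_{n,1}}$ drops this coupling through $r$, and its direction is reversed (for $\rho>1$ it would give $(d_n-1)\ell<d_{n,1}$). A further minor slip: the normalization should be $\psi_n(0)=\infty$ when $p_n\ge 2$ (not $\psi_n(\infty)=0$), so that $H(\infty)=0$. Once the large-$R$ regime, the two-sided constraint on $r$, and the normalization are fixed, the rest of your transcription coincides with the paper's argument.
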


\begin{proof}
The proof is a bit similar to that of Lemma \ref{lema:cover-disk-to-disk}. Without loss of generality, we assume that $p_n\geq 2$ since the proof for $p_n=1$ is similar.
For each large $R\in(1,+\infty)$, let $\gamma_{n,1}$, $\gamma_{n,p_n+1}$ be the equipotentials in $B_{n,1}$ such that $L_{n,1}(\gamma_{n,1})=R$ and $L_{n,1}(\gamma_{n,p_n+1})=R^{d_n}$. For $2\leq j\leq p_n$, we denote $\gamma_{n,j}:=h^{\circ (j-1)}(\gamma_{n,1})$. Then $\gamma_{n,j}$ is an equipotential in $B_{n,j}$ for all $1\leq j\leq p_n$. See Figure \ref{Fig_surgery-2} for a partial illustration.

\begin{figure}[!htpb]
  \setlength{\unitlength}{1mm}
  \centering
  \includegraphics[width=0.95\textwidth]{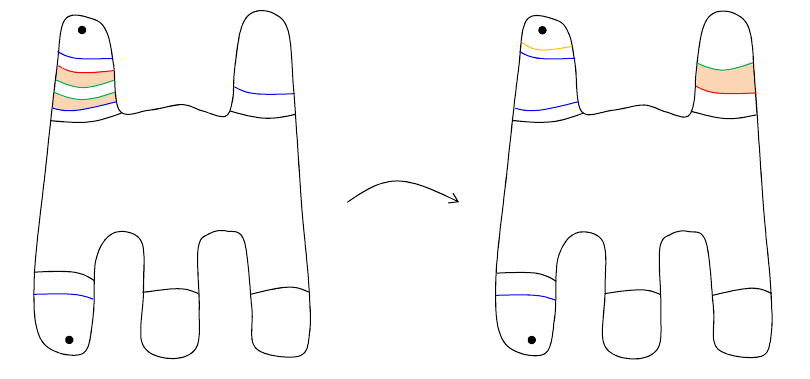}
  \put(-113,5){$0$}
  \put(-110,49){$\infty$}
  \put(-103,47){$\gamma_{n,p_n+1}$}
  \put(-102,41){$\gamma_{n,1}$}
  \put(-76,42){$\gamma_{n,2}$}
  \put(-121,12){$\eta_{1,1}$}
  \put(-119,42){\footnotesize{$\beta_{n,1}$}}
  \put(-118,44){\footnotesize{$\alpha_{n,1}$}}
  \put(-118,46){\footnotesize{$\xi_{n,1}$}}
  \put(-114,17){\footnotesize{$\partial U_{1,1}$}}
  \put(-99,15){\footnotesize{$\partial U_{1,2}$}}
  \put(-83,15){\footnotesize{$\partial U_{1,p_1}$}}
  \put(-111,35){\footnotesize{$\partial B_{n,1}$}}
  \put(-86,36){\footnotesize{$\partial B_{n,2}$}}
  \put(-96,50){$\EC$}
  \put(-62,31){$H$}
  \put(-27,50){$\EC$}
  \put(-44,5){$0$}
  \put(-41,49){$\infty$}
  \put(-49,50){$\kappa_{n,1}$}
  \put(-35,47){$\gamma_{n,p_n+1}$}
  \put(-34,41){$\gamma_{n,1}$}
  \put(-7,42){$\gamma_{n,2}$}
  \put(-7,46){$\eta_{n,2}$}
  \put(-53,12){$\eta_{1,1}$}
  \put(-45,17){\footnotesize{$\partial U_{1,1}$}}
  \put(-30,15){\footnotesize{$\partial U_{1,2}$}}
  \put(-14,15){\footnotesize{$\partial U_{1,p_1}$}}
  \put(-42,35){\footnotesize{$\partial B_{n,1}$}}
  \put(-17,36){\footnotesize{$\partial B_{n,2}$}}
  \caption{The sketch of the definition of $H|_{\EC\setminus \overline{D}(\xi_{n,1})}:\EC\setminus \overline{D}(\xi_{n,1})\to\EC\setminus\overline{D}(\gamma_{n,2})$, where $p_n\geq 2$. For further references, some other equipotentials are drawn also in the super-attracting basins of $\infty$. The boundaries of immediate basins of the cycle $O_n$ and parts of their first preimages are marked also.}
  \label{Fig_surgery-2}
\end{figure}

Let $\xi_{n,1}$ be an equipotential in $B_{n,1}$ such that $L_{n,1}(\xi_{n,1})=S\in (R,R^{d_n})$. Recall that $\phi_{n,1}:\EC\setminus \overline{D}(\xi_{n,1})\to\EC\setminus\overline{\D}(0,S)$ is the restriction of the B\"{o}ttcher map. For $\ell\geq 1$, we define $Q_{\ell}(z)=z^{\ell}/S^{\ell}:\EC\setminus\overline{\D}(0,S)\to\EC\setminus\overline{\D}$. Let $\psi_n:\EC\setminus\overline{D}(\gamma_{n,2})\to\EC\setminus\overline{\D}$ be a conformal map such that $\psi_n(0)=\infty$. Define
\begin{equation*}
H:=\psi_n^{-1}\circ Q_{\ell}\circ\phi_{n,1}.
\end{equation*}
Then $H:\EC\setminus \overline{D}(\xi_{n,1})\to\EC\setminus\overline{D}(\gamma_{n,2})$ is a holomorphic branched mapping with degree $\ell$, $H(\infty)=0$ and $\infty$ is the unique critical point. Since
\begin{equation}\label{equ-Q-phi-gamma-1}
Q_{\ell}\circ\phi_{n,1}(\gamma_{n,p_n+1})=\{z\in\C:|z|=R^{d_n \ell}/S^{\ell}\}\subset\EC\setminus\overline{\D}
\end{equation}
is an Euclidean circle, it follows that $\eta_{1,1}:=H(\gamma_{n,p_n+1})=\psi_n^{-1}(\{z:|z|=R^{d_n \ell}/S^{\ell}\})$ is a real-analytic Jordan curve separating $0$ from $\gamma_{n,2}$.

We need to find a sufficient condition to guarantee that $\eta_{1,1}$ is contained in $U_{1,1}$ and $L_{n,1}(\gamma_{n,p_n+1})<L_{n,1}(z)$, where $z\in\kappa_{n,1}:=h(\eta_{1,1})$. Note that $\kappa_{n,1}$ is not necessarily an equipotential in $B_{n,1}$. Since the degree of the restriction of $h$ on $B_{n,1}$ is $d_{n,1}$, the map $h$ can be written near the infinity as
\begin{equation*}
h(z)=h(\infty)+b_n/z^{d_{n,1}}+O(1/z^{d_{n,1}+1}),
\end{equation*}
where $b_n\neq 0$ is a constant depending only on $h$. If $R>1$ is large enough, then $D(\gamma_{n,2})=h(\EC\setminus \overline{D}(\gamma_{n,1}))$ is a Jordan disk centered at $h(\infty)$ with radius about $C_1/R^{d_{n,1}}$, where $C_1>0$ is a constant independent of $R$. More specifically, $R$ can be chosen large enough such that $\D(h(\infty),C_1/ (2R^{d_{n,1}})) \subset D(\gamma_{n,2})\subset\D(h(\infty),2C_1/R^{d_{n,1}})$.

Let $\widetilde{\eta}_{1,1}\subset U_{1,1}$ be a sufficiently small round circle separating $0$ from $\partial U_{1,1}$ with radius $r>0$. There exists a constant $C_2>0$ depending on $C_1$ but independent of the large $R>1$ and small $r>0$ such that the conformal modulus satisfies
\begin{equation*}
\Mod(D(\psi_n(\widetilde{\eta}_{1,1}))\setminus\overline{\D})=\Mod(A(\widetilde{\eta}_{1,1},\gamma_{n,2}))\leq \frac{1}{2\pi}\log \frac{R^{d_{n,1}}}{r}+C_2.
\end{equation*}
Similar to the argument in the proof of Lemma \ref{lema:cover-disk-to-disk}, by Koebe's distortion theorem, there exists a constant $C_3\geq 1$ independent of large $R>1$ and small $r>0$ such that
\begin{equation*}
C_3^{-1}|w_2|\leq |w_1|\leq C_3|w_2| \text{ for all } w_1, w_2\in \psi_n(\widetilde{\eta}_{1,1}).
\end{equation*}
This implies that there exists a constant $C_4>0$ independent of $R>0$ such that for any $w\in\psi_n(\widetilde{\eta}_{1,1})$, we have
\begin{equation*}
\log|w|\leq\log \frac{R^{d_{n,1}}}{r}+C_4.
\end{equation*}
In order to guarantee that $\eta_{1,1}=H(\gamma_{n,p_n+1})\subset U_{1,1}$, by \eqref{equ-Q-phi-gamma-1}, it is sufficient to obtain the inequality
\begin{equation}\label{equ-R-r-1}
\log\frac{R^{d_n\ell}}{S^\ell}\geq\log \frac{R^{d_{n,1}}}{r}+C_4.
\end{equation}

Since the local degree of $h$ at $0$ is $m_1$ and $h(0)=\infty$, it implies that $h$ can be written near the origin as
\begin{equation*}
1/h(z)=b_0 z^{m_1}+O(z^{m_1+1}),
\end{equation*}
where $b_0\neq 0$ is a constant depending only on $h$.
In order to guarantee that $L_{n,1}(\gamma_{n,p_n+1})<L_{n,1}(z)$ for all $z\in\kappa_{n,1}=h(\eta_{1,1})$, it is sufficient to obtain the inequality
\begin{equation}\label{equ-R-r-2}
\log\frac{1}{r^{m_1}}>\log R^{d_n}+C_5,
\end{equation}
where $C_5>0$ is a constant depending on $h$ but independent of large $R$ and small $r$.
Note that $d_n$, $d_{n,1}$, $m_1$, $\ell$ are positive integers and $S\in (R,R^{d_n})$. Since $R>1$ can be arbitrarily large (and hence $r>0$ should be sufficiently small) and $S$ can be arbitrarily close to $R$, by \eqref{equ-R-r-1} and \eqref{equ-R-r-2}, it is sufficient to guarantee that
\begin{equation*}
\frac{R^{d_n \ell}}{R^{\ell}}>\frac{R^{d_{n,1}}}{r} \quad\text{and}\quad \frac{1}{r}>R^{\frac{d_n}{m_1}}.
\end{equation*}
This is equivalent to $(d_n-1)\ell-d_{n,1}>\tfrac{d_n}{m_1}$, i.e. $\ell>(d_{n,1}+\tfrac{d_n}{m_1})/(d_n-1)$, as desired.
\end{proof}

\begin{rmk}
Since $d_{n,1}\leq d_n$, $d_n\geq 2$ and $m_1\geq 1$, the inequality $\ell>(d_{n,1}+\tfrac{d_n}{m_1})/(d_n-1)$ is always satisfied if we set $\ell=5$. Moreover, $\ell$ can be chosen as $3$ if $m_1\geq 3$.

One can obtain a similar result as Lemma \ref{lema:cover-disk-to-disk-2} if $\ell$ is chosen such that $\ell>(d_{n,1}+\tfrac{d_n}{m_i})/(d_n-1)$ for some $1\leq i\leq n-1$.
\end{rmk}

Denote $W:=D(\gamma_{n,1})$. We define $H|_W:=h|_W$. Then $H|_{\gamma_{n,1}}:\gamma_{n,1}\to\gamma_{n,2}$ is a degree $d_{n,1}$ covering map and $H|_W$ maps $W$ onto $\EC$.

\subsection{Annulus-to-disk and annulus-to-annulus coverings II}\label{subsec-annu-to-disk-2}

Similar to the construction of $F$, we also need to construct some holomorphic branched covering maps from annulus to disk and quasiregular covering maps from annulus to annulus for $H$. Similar to Lemma \ref{lema:annu-disk}, the following result is an immediate corollary of Lemma \ref{lema:ann-to-disk-std}. See Figure \ref{Fig_surgery-2}.

\begin{lema}\label{lema:annu-disk-2}
There exist two equipotentials $\alpha_{n,1}$, $\beta_{n,1}$ in $B_{n,1}$, an equipotential $\eta_{n,2}$ in $B_{n,2}$ and a holomorphic branched covering map $H|_{A(\alpha_{n,1},\beta_{n,1})}:A(\alpha_{n,1},\beta_{n,1})\to D(\eta_{n,2})$ (if $p_n\geq 2$) or $H|_{A(\alpha_{n,1},\beta_{n,1})}:A(\alpha_{n,1},\beta_{n,1})\to \EC\setminus\overline{D}(\eta_{n,2})$ (if $p_n=1$) with degree $d_{n,1}+\ell$ such that
\begin{enumerate}
\item $L_{n,1}(\xi_{n,1})>L_{n,1}(\alpha_{n,1})>L_{n,1}(\beta_{n,1})>L_{n,1}(\gamma_{n,1})$, $L_{n,2}(\eta_{n,2})>L_{n,2}(\gamma_{n,2})$;
\item $H|_{A(\alpha_{n,1},\beta_{n,1})}$ has $d_{n,1}+\ell$ critical points in $A(\alpha_{n,1},\beta_{n,1})$; and
\item $H|_{A(\alpha_{n,1},\beta_{n,1})}$ can be extended continuously to $\alpha_{n,1}\cup\beta_{n,1}$ by a degree $\ell$ covering $H|_{\alpha_{n,1}}:\alpha_{n,1}\to\eta_{n,2}$ and a degree $d_{n,1}$ covering $H|_{\beta_{n,1}}:\beta_{n,1}\to\eta_{n,2}$.
\end{enumerate}
\end{lema}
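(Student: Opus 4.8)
The plan is to derive Lemma~\ref{lema:annu-disk-2} from the first half of Lemma~\ref{lema:ann-to-disk-std}, exactly as Lemma~\ref{lema:annu-disk} was derived from it; the only differences are that the super-attracting point is now $\infty\in B_{n,1}$, so that all levels lie in $(1,+\infty)$, and that the two boundary degrees have switched roles, $\ell$ being attached to the boundary nearer $\xi_{n,1}$ and $d_{n,1}$ to the boundary nearer $\gamma_{n,1}$. First I would fix the target domain: let $\eta_{n,2}$ be any equipotential in $B_{n,2}$ (regarded as a curve in $B_{n,1}$ when $p_n=1$) with $L_{n,2}(\eta_{n,2})>L_{n,2}(\gamma_{n,2})$, so that $D(\eta_{n,2})$ (resp.\ $\EC\setminus\overline{D}(\eta_{n,2})$ when $p_n=1$) is a Jordan domain, and fix a Riemann map $\iota$ from $\D$ onto it; by Carath\'{e}odory's theorem $\iota$ extends to a homeomorphism of the closures, since $\eta_{n,2}$ is an analytic Jordan curve. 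Next I would apply the first half of Lemma~\ref{lema:ann-to-disk-std} with its two integers taken to be, respectively, the integer $\ell$ of Lemma~\ref{lema:cover-disk-to-disk-2} and $d_{n,1}$: this gives, for an appropriate $r_0\in(0,1)$, a holomorphic branched covering $\psi:\A_{r_0}\to\D$ of degree $d_{n,1}+\ell$ with $d_{n,1}+\ell$ critical points in $\A_{r_0}$, which extends continuously to $\partial\A_{r_0}$ by a degree $\ell$ covering of $\partial\D$ and a degree $d_{n,1}$ covering of $\T_{r_0}$, and with $\Mod(\A_{r_0})<\tfrac{2}{\pi}\log 2$.

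The one step that needs care, and where I expect the only (mild) obstacle, is the placement of $\alpha_{n,1}$ and $\beta_{n,1}$. In the B\"{o}ttcher coordinate $\phi_{n,1}$ the annulus $A(\xi_{n,1},\gamma_{n,1})$ is the round annulus $\{w:R<|w|<S\}$, whose modulus is $\tfrac{1}{2\pi}\log(S/R)$; I need this to exceed $\tfrac{2}{\pi}\log 2$. Since $d_n\ge 2$, in the proof of Lemma~\ref{lema:cover-disk-to-disk-2} one may re-select the large parameter $R>1$ together with $S\in(R,R^{d_n})$ so that $S/R>16$, and one checks directly that \eqref{equ-R-r-1} and \eqref{equ-R-r-2} still hold for $R$ large: the hypothesis $\ell>(d_{n,1}+d_n/m_1)/(d_n-1)$ being a \emph{strict} inequality, the relevant positive power of $R$ eventually dominates every fixed constant factor. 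With such a choice $\Mod(A(\xi_{n,1},\gamma_{n,1}))>\tfrac{2}{\pi}\log 2>\Mod(\A_{r_0})$, so I may choose equipotentials with $L_{n,1}(\gamma_{n,1})<L_{n,1}(\beta_{n,1})<L_{n,1}(\alpha_{n,1})<L_{n,1}(\xi_{n,1})$ and $\Mod(A(\alpha_{n,1},\beta_{n,1}))=\Mod(\A_{r_0})$, while $A(\gamma_{n,1},\beta_{n,1})$ and $A(\alpha_{n,1},\xi_{n,1})$ keep positive modulus; this leftover room is exactly what the interpolation in \S\ref{subsec-annu-to-disk-2} will consume.

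Finally I would glue the pieces. Since the two annuli have equal modulus there is a conformal homeomorphism $\theta:A(\alpha_{n,1},\beta_{n,1})\to\A_{r_0}$, unique up to rotation, extending to send $\alpha_{n,1}$ to $\partial\D$ and $\beta_{n,1}$ to $\T_{r_0}$; set $H|_{A(\alpha_{n,1},\beta_{n,1})}:=\iota\circ\psi\circ\theta$. Being the composition of a conformal map, a holomorphic branched covering of degree $d_{n,1}+\ell$, and a conformal map, it is a holomorphic branched covering of degree $d_{n,1}+\ell$ onto $D(\eta_{n,2})$ (resp.\ $\EC\setminus\overline{D}(\eta_{n,2})$ when $p_n=1$); with the level inequalities above this gives (a). Its critical points are the $\theta$-preimages of the $d_{n,1}+\ell$ critical points of $\psi$ and lie in the open annulus, which is (b). And it extends continuously to $\alpha_{n,1}\cup\beta_{n,1}$ with $H|_{\alpha_{n,1}}=\iota\circ\psi|_{\partial\D}\circ\theta$ a degree $\ell$ covering $\alpha_{n,1}\to\eta_{n,2}$ and $H|_{\beta_{n,1}}=\iota\circ\psi|_{\T_{r_0}}\circ\theta$ a degree $d_{n,1}$ covering $\beta_{n,1}\to\eta_{n,2}$, which is (c).
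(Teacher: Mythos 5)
Your proposal is correct and follows essentially the same route as the paper, which presents this lemma as an immediate corollary of the first half of Lemma \ref{lema:ann-to-disk-std} (just as Lemma \ref{lema:annu-disk} was), transferring the model covering $\psi:\A_{r_0}\to\D$ to $A(\alpha_{n,1},\beta_{n,1})$ by conformal maps once $\Mod(A(\xi_{n,1},\gamma_{n,1}))$ exceeds $\tfrac{2}{\pi}\log 2$. Your explicit re-selection of $R$ and $S$ with $S/R>16$, checked against \eqref{equ-R-r-1} and \eqref{equ-R-r-2} using the strict inequality on $\ell$, is exactly the detail the paper leaves implicit (its ``$S$ arbitrarily close to $R$'' must indeed be replaced by a fixed ratio), so this is a faithful filling-in rather than a different argument.
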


Now $H$ is defined on the Riemann sphere except on the annuli $A(\xi_{n,1},\alpha_{n,1})$ and $A(\beta_{n,1},\gamma_{n,1})$. Since all of the connected components of the boundaries of these two annuli, together with their images $\gamma_{n,2}$ and $\eta_{n,2}$, are quasicircles, one can make an interpolation such that the resulting map $H$ satisfies
\begin{enumerate}
\item $H|_{A(\xi_{n,1},\alpha_{n,1})}:A(\xi_{n,1},\alpha_{n,1})\to A(\gamma_{n,2},\eta_{n,2})$ is a degree $\ell$ covering map;
\item $H|_{A(\beta_{n,1},\gamma_{n,1})}:A(\beta_{n,1},\gamma_{n,1})\to A(\eta_{n,2},\gamma_{n,2})$ is a degree $d_{n,1}$ covering map; and
\item $H|_{A(\xi_{n,1},\alpha_{n,1})}$ and $H|_{A(\beta_{n,1},\gamma_{n,1})}$ are local quasiconformal.
\end{enumerate}

\subsection{Uniformization II}

Now we have a quasiregular map $H$ defined from the Riemann sphere to itself whose dynamics is sketched in Figure \ref{Fig_surgery-2}. As before, we will apply Shishikura's fundamental lemma for quasiconformal surgery to conjugate $H$ to a rational map.

\begin{cor}\label{cor:home-2}
There is a quasiconformal map $\varphi_2:\EC\to\EC$ such that
\begin{equation*}
g:=\varphi_2\circ H\circ \varphi_2^{-1}
\end{equation*}
is rational map satisfying $\varphi_2(0)=0$, $\varphi_2(\infty)=\infty$ and $\deg(g)=d+\ell+\sum_{i=1}^{n-1}m_i$.
\end{cor}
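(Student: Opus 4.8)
The plan is to apply Shishikura's fundamental lemma (Lemma~\ref{lema:qc}) to the quasiregular map $H$ constructed in \S\S\ref{subsec-equi-2}--\ref{subsec-annu-to-disk-2}, in exactly the same way that Corollary~\ref{cor:home} was deduced from $F$. First I would specify the forward-invariant open set $E_2$ on which $H$ is holomorphic: the natural choice is the union of the Fatou components of $h$ that now get mapped \emph{into} (the $\varphi_1$-image of) the old basins $\MA_i$, $1\le i\le n-1$, together with the equipotential disks in the $O_n$-cycle that $H$ moves forward. Concretely, because Lemma~\ref{lema:cover-disk-to-disk-2}(b) gives $H(\gamma_{n,p_n+1})=\eta_{1,1}\subset U_{1,1}$ and $H$ agrees with $h$ on $W=D(\gamma_{n,1})$ and on the surviving Fatou components $U_{i,j}$, one sets
\begin{equation*}
E_2:=\bigcup_{i=1}^{n-1}\bigcup_{j=1}^{p_i}U_{i,j}\;\cup\;D(\gamma_{n,1})\;\cup\;\bigcup_{j=2}^{p_n+1}\bigl(B_{n,j}\setminus\overline{D}(\gamma_{n,j})\bigr),
\end{equation*}
or some variant of this; the point is that $H$ on $E_2$ coincides with a holomorphic map and $H(E_2)\subset E_2$, using $H(\gamma_{n,p_n+1})=\eta_{1,1}\in U_{1,1}$ and the fact that $h$ eventually attracts $U_{i,j}$ and $\eta_{1,1}$ into $\varphi_1(E_1)\subset E_2$.

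Second, I would verify the dilatation condition. By the construction in \S\ref{subsec-annu-to-disk-2}, $H$ fails to be holomorphic only on the two annuli $A(\xi_{n,1},\alpha_{n,1})$ and $A(\beta_{n,1},\gamma_{n,1})$ in $B_{n,1}$; these are contained in $H^{-1}(D(\gamma_{n,2}))$ (for $p_n\ge 2$; in the appropriate set when $p_n=1$), hence in $H^{-1}(E_2)$. Therefore $\partial H/\partial\overline z=0$ on $E_2$ and a.e.\ on $\EC\setminus H^{-1}(E_2)$, so the hypotheses of Lemma~\ref{lema:qc} hold with $N=1$, yielding a quasiconformal $\varphi_2$ with $g:=\varphi_2\circ H\circ\varphi_2^{-1}$ rational. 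Post-composing $\varphi_2$ with a M\"obius map we may normalize $\varphi_2(0)=0$ and $\varphi_2(\infty)=\infty$, which is legitimate since conjugating a rational map by a M\"obius transformation keeps it rational.

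Third, for the degree count I would add up the local degrees of $H$ over a generic fiber, reading them off the pieces: $H|_W=h|_W$ maps $D(\gamma_{n,1})$ onto $\EC$ with degree $\deg(h)=d+\sum_{i=1}^{n-1}m_i$; the piece $H|_{\EC\setminus\overline D(\xi_{n,1})}$ contributes degree $\ell$ by Lemma~\ref{lema:cover-disk-to-disk-2}(a); and the interpolating annular pieces together with $H|_{A(\alpha_{n,1},\beta_{n,1})}$ contribute degrees $\ell$, $d_{n,1}+\ell$, $d_{n,1}$ over the regions $A(\gamma_{n,2},\eta_{n,2})$, $D(\eta_{n,2})$, $A(\eta_{n,2},\gamma_{n,2})$ respectively, consistently assembling to a map of total degree $\deg(h)+\ell=d+\ell+\sum_{i=1}^{n-1}m_i$. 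Equivalently, since $\varphi_2$ is a homeomorphism, $\deg(g)=\deg(H)$, and $\deg(H)$ can be computed by counting preimages of one point in, say, $D(\gamma_{n,2})$: there are the $d_{n,1}+\ell$ preimages inside $B_{n,1}$ coming from the annulus-to-disk map plus the annular interpolations, and the $\deg(h)+\ell-(d_{n,1}+\ell)=\deg(h)-d_{n,1}$ further preimages contributed by $h$ elsewhere—bookkeeping that I would carry out carefully but which is routine.

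The main obstacle is the first step: pinning down $E_2$ so that it is genuinely forward invariant under $H$ and captures all the non-holomorphy after finitely many pullbacks. One must be careful that the curves $\eta_{1,1}$, $\kappa_{n,1}$ produced by Lemma~\ref{lema:cover-disk-to-disk-2} really land inside the Fatou components $U_{1,1}$ of $h$ and are attracted into $\varphi_1(E_1)$—this is where conditions (c) and (d) of that lemma (ensuring $\kappa_{n,1}$ sits at a level above $\gamma_{n,p_n+1}$) are used, guaranteeing that the $H$-orbit of $\eta_{1,1}$ does not re-enter the surgered annuli. Once $E_2$ is correctly identified, the rest is a direct transcription of the argument for Corollary~\ref{cor:home}.
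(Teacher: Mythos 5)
Your overall strategy is the paper's: build a forward-invariant open set $E_2$ on which $H$ is holomorphic, check that the two surgered annuli are absorbed after one pullback, apply Lemma \ref{lema:qc} with $N=1$, normalize by a M\"obius map, and count $\deg(g)=\deg(h)+\ell=d+\ell+\sum_{i=1}^{n-1}m_i$; the degree bookkeeping and the dilatation remark are essentially fine. The genuine gap is exactly in the step you flag as the main obstacle: the set $E_2$ you write down does not satisfy the hypotheses of Lemma \ref{lema:qc}. That lemma requires both $H(E_2)\subset E_2$ and $\partial H/\partial\overline{z}=0$ \emph{on} $E_2$; since $H$ fails to be holomorphic on $A(\xi_{n,1},\alpha_{n,1})\cup A(\beta_{n,1},\gamma_{n,1})\subset B_{n,1}$, no admissible $E_2$ can contain a set whose $H$-image meets these annuli. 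But $H=h$ on $D(\gamma_{n,1})$ and $h(D(\gamma_{n,1}))=\EC$, so including $D(\gamma_{n,1})$ destroys invariance outright; likewise $h(U_{1,1})=B_{n,1}$ (by the first surgery, $h(\varphi_1(a_1))=\infty$), so the full components $U_{i,j}$ cannot be used either, as their images sweep over the surgered annuli. The justification that ``$h$ eventually attracts $U_{i,j}$ and $\eta_{1,1}$ into $\varphi_1(E_1)\subset E_2$'' also conflates $h$ with $H$: under $H$ the neighborhood of $\infty$ is sent to a neighborhood of $0$, not back toward the old cycle, and $\varphi_1(E_1)$ contains the surgered annuli, so it cannot lie inside any admissible $E_2$.

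The missing idea is that $E_2$ must be much smaller and adapted to the new $0\leftrightarrow\infty$ dynamics of $H$. The paper takes, for $p_n\geq 2$, $E_2=D(\eta_{1,1})\cup(\EC\setminus\overline{D}(\gamma_{n,p_n+1}))\cup\bigcup_{j=2}^{p_n}D(\gamma_{n,j})$, i.e.\ the small disk bounded by $\eta_{1,1}$ inside $U_{1,1}$, the high-level neighborhood of $\infty$ outside $\gamma_{n,p_n+1}$, and the small equipotential disks around the other points of the cycle $O_n$ (and $D(\eta_{1,1})\cup(\EC\setminus\overline{D}(\gamma_{n,2}))$ when $p_n=1$). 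Invariance then comes precisely from Lemma \ref{lema:cover-disk-to-disk-2}: $H(\EC\setminus\overline{D}(\gamma_{n,p_n+1}))\subset D(\eta_{1,1})$, and by conditions (c)--(d) on $\kappa_{n,1}=h(\eta_{1,1})$ one gets $H(D(\eta_{1,1}))\subset\EC\setminus\overline{D}(\gamma_{n,p_n+1})$, while $h$ chains the disks $D(\gamma_{n,j})$ forward into that exterior region. With this $E_2$ your remaining steps go through verbatim (the surgery annuli lie in $H^{-1}(D(\gamma_{n,2}))\subset H^{-1}(E_2)$ for $p_n\geq 2$, resp.\ in $H^{-1}(\EC\setminus\overline{D}(\gamma_{n,2}))$ for $p_n=1$), so $N=1$ works; but as written your candidate $E_2$ fails forward invariance, which is the condition the whole corollary hinges on.
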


\begin{proof}
Recall that $\gamma_{n,j}$ is an equipotential defined in $B_{n,j}$, where $1\leq j\leq p_n+1$ (In particular, $\gamma_{n,p_n+1}$ is contained in $B_{n,1}$).
For the quasiregular map $H$, we define an open set
\begin{equation}\label{equ-E-2}
E_2:=
\left\{
\begin{array}{ll}
D(\eta_{1,1})\cup(\EC\setminus \overline{D}(\gamma_{n,p_n+1}))\cup\bigcup_{j=2}^{p_n}D(\gamma_{n,j})  &~~~~~~~\text{if}~p_n\geq 2, \\
D(\eta_{1,1})\cup(\EC\setminus \overline{D}(\gamma_{n,2})) &~~~~~~\text{if}~p_n=1.
\end{array}
\right.
\end{equation}
According to Lemma \ref{lema:cover-disk-to-disk-2}, we have $H(\EC\setminus \overline{D}(\gamma_{n,p_n+1}))\subset D(\eta_{1,1})$ and $H(D(\eta_{1,1}))\subset\EC\setminus \overline{D}(\gamma_{n,p_n+1})$. This implies that $H(E_2)\subset E_2$. On the other hand, $A(\xi_{n,1},\alpha_{n,1})\cup A(\beta_{n,1},\gamma_{n,1})$ is contained in $H^{-1}(D(\gamma_{n,2}))$ if $p_n\geq 2$ and is contained in $H^{-1}(\EC\setminus\overline{D}(\gamma_{n,2}))$ if $p_n=1$. Note that $H$ is analytic except on the annuli $A(\xi_{n,1},\alpha_{n,1})$ and $A(\beta_{n,1},\gamma_{n,1})$. Therefore, we have $\partial H/\partial \overline{z}=0$ on $E_2$ and a.e. on $\EC\setminus H^{-1}(E_2)$.
The result then follows immediately from Lemma \ref{lema:qc}.

By Corollary \ref{cor:home}, we know that $\deg(h)=d+\sum_{i=1}^{n-1}m_i$ and hence $0$ has $\deg(h)$ preimages which are not contained in the super-attracting basin of $\infty$. By Lemma \ref{lema:cover-disk-to-disk-2}, $0$ has another one preimage $\infty$ (with multiplicity $\ell$) in $O_n$ under the map $H$, and $0$ has no other preimages by Lemma \ref{lema:annu-disk-2}. This implies that $\deg(g)=\deg(h)+\ell=d+\ell+\sum_{i=1}^{n-1}m_i$.
\end{proof}

\begin{rmk}
The rational map $g$ has a super-attracting cycle $0\leftrightarrow\infty$ with period $2$.
\end{rmk}

\subsection{The buried property}

Recall that $\varphi_1:\EC\to\EC$ and $\varphi_2:\EC\to\EC$ are quasiconformal mappings introduced in Corollaries \ref{cor:home} and \ref{cor:home-2}.

\begin{prop}\label{prop-buried}
The set $\varphi_2\circ \varphi_1(J(f))$ is a buried Julia component of $g$.
\end{prop}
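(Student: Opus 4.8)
The plan is to follow the same three-step pattern used in Proposition~\ref{prop-homeo}: first locate $J'':=\varphi_2\circ\varphi_1(J(f))$ inside the Julia set of $g$, then show it is a full Julia component of $g$, and finally promote it from semi-buried to buried by producing, in \emph{every} complementary component of $J''$, a sequence of Julia components of $g$ accumulating on $\partial J''$. Since $\varphi_2$ is a quasiconformal homeomorphism and $g=\varphi_2\circ H\circ\varphi_2^{-1}$ where $H$ agrees with $h$ on $W=D(\gamma_{n,1})$ and off the surgery annuli, the set $J''$ is still contained in the grand orbit of the (semi-buried) component $\varphi_1(J(f))$ of $h$, and the argument that $J''\subset J(g)$ is verbatim the one in Proposition~\ref{prop-homeo}: any neighbourhood of a point of $J''$ meets $\varphi_2(\varphi_1(\mathcal A_n))$, whose points are swept into the cycle $0\leftrightarrow\infty$ of $g$, while $J''$ is forward invariant, so the iterates of $g$ are not normal there.

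Next I would identify the complementary components of $J''$. By Proposition~\ref{prop-homeo}, before the second surgery the component $\varphi_1(J(f))$ of $h$ was semi-buried from every component of $\varphi_1(\mathcal V)$ but \emph{not} from the immediate basin side, i.e.\ from the components of $\varphi_1(O_n)$: there the boundary $\varphi_1(\partial B_{n,1})$ coincides with the boundary of the Fatou components $B_{n,j}$. The key point is that the second surgery destroys exactly this "bad" side. On the $\mathcal V$-side nothing essential changes: $H=h$ there, so the sequences $\{J_k\}$ of Julia components of $h$ converging to $\varphi_1(\partial B_{i,1})$ from Proposition~\ref{prop-homeo} survive (after applying $\varphi_2$) as Julia components of $g$ accumulating on $J''$ from each component of $\varphi_2\circ\varphi_1(\mathcal V)$. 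On the $O_n$-side, Lemma~\ref{lema:cover-disk-to-disk-2} was designed precisely so that $H$ maps $\EC\setminus\overline D(\xi_{n,1})$ onto $\EC\setminus\overline D(\gamma_{n,2})$ as a degree-$\ell$ cover with $H(\gamma_{n,p_n+1})=\eta_{1,1}\subset U_{1,1}$ and, crucially, $h(\eta_{1,1})=\kappa_{n,1}$ lies at level larger than that of $\gamma_{n,p_n+1}$. Hence the annulus $A(\gamma_{n,p_n+1},\xi_{n,1})$ contains a Julia component of $g$ separating $0$ from $\infty$, and $A(\xi_{n,1},\gamma_{n,1})$ is swallowed by a Fatou component of $g$ also separating $0$ from $\infty$. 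Pulling back by $h^{\circ p_n}=H^{\circ p_n}$ on the annulus $A(\gamma_{n,1},\partial B_{n,1})$ (a degree-$d_n$ cover onto $A(\gamma_{n,p_n+1},\partial B_{n,1})$) and iterating, I obtain sequences $\{J_k\}$ and $\{U_k\}$ of Julia and Fatou components of $g$, each separating $0$ from $\infty$, whose $\varphi_2\circ\varphi_1$-preimage sits at level $\geq r^{1/d_n^{\,k-1}}\to1$ in $B_{n,1}$, so they converge to $\varphi_2\circ\varphi_1(\partial B_{n,1})$ in the Hausdorff metric.

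With these accumulating sequences in hand, the rest is formal and mirrors the end of Proposition~\ref{prop-homeo}. If $J''$ were not a Julia component of $g$, the component of $J(g)$ containing it would contain a point $z_0$ outside $J''$; iterating $g$, one may assume $z_0$ lies in $\varphi_2\circ\varphi_1(B_{n,1})$ (on the $O_n$-side) or in some $\varphi_2\circ\varphi_1(B_{i,1})$ (on the $\mathcal V$-side), and in either case the corresponding sequence $\{J_k\}$ of Julia components of $g$ converging to $\partial(\varphi_2\circ\varphi_1(B_{\bullet,1}))$ separates $z_0$ from $J''$, a contradiction. Therefore $J''$ is a Julia component. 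Finally, $\varphi_2\circ\varphi_1(\partial B_{n,1})$ and each $\varphi_2\circ\varphi_1(\partial B_{i,1})$, being connected subsets of $J''$ with Julia components of $g$ accumulating on them from the relevant complementary side, are semi-buried from every component of $\varphi_2\circ\varphi_1(\mathcal A_n)$ and of $\varphi_2\circ\varphi_1(\mathcal A_i)$ respectively; taking preimages under $g$ covers all complementary components of $J''$, so $J''$ is semi-buried from each of them, i.e.\ buried.

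I expect the main obstacle to be the $O_n$-side estimate: one must be sure that \emph{no} Fatou component of $g$ meets $J''$ along the former immediate basin $\varphi_1(\partial B_{n,1})$, which hinges on condition~(d) of Lemma~\ref{lema:cover-disk-to-disk-2} ($\kappa_{n,1}$ sits strictly above $\gamma_{n,p_n+1}$) guaranteeing that the Fatou components $U_k$ genuinely separate $0$ from $\infty$ and interleave with the $J_k$; checking that the interpolation annuli $A(\xi_{n,1},\alpha_{n,1})$ and $A(\beta_{n,1},\gamma_{n,1})$ are absorbed into Fatou components of $g$ rather than creating new Julia pieces that could wreck the separation is the delicate bookkeeping point. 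Everything else is a transcription of the argument already carried out for $h$.
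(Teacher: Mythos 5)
Your overall architecture ($J''\subset J(g)$, then accumulating sequences in every complementary component, then the trapping argument) is the paper's, and your treatment of the $O_n$-side is essentially identical to the paper's proof (the Julia component in $\varphi_2(A(\gamma_{n,p_n+1},\xi_{n,1}))$, the Fatou component swallowing $\varphi_2(A(\xi_{n,1},\gamma_{n,1}))$, and the pullbacks under $h^{\circ p_n}$; note only that in $B_{n,1}$ the levels lie in $(1,+\infty)$, so the correct bound is $L_{n,1}\leq R^{1/d_n^{k-1}}\to 1$, not a lower bound as you wrote). The genuine gap is on the $\mathcal V$-side. You assert that, since $H=h$ there, the Julia components of $h$ produced in Proposition \ref{prop-homeo} ``survive (after applying $\varphi_2$) as Julia components of $g$.'' What is automatic is only that $\varphi_2(J(h))\subset J(g)$, so these sets persist as connected \emph{subsets} of $J(g)$; whether each is a full Julia component of $g$ --- equivalently, whether it is disjoint from the component of $J(g)$ containing $J''$ --- is exactly what is in question, because the second surgery plants new Julia set of $g$ inside \emph{every} Fatou component of $h$ (all of them are eventually mapped onto $B_{n,1}$), and a priori these new pieces could weld old components of $J(h)$ together. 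Your final step (``the corresponding sequence $\{J_k\}$ of Julia components of $g$ \ldots separates $z_0$ from $J''$, a contradiction'') needs the separator to be disjoint from the component containing $z_0$ and $J''$, which is precisely the unproved survival claim; as written this part is circular. (By contrast, using these old continua merely as subsets of $J(g)$ to block Fatou components of $g$ from reaching $J''$ is legitimate, since Fatou components are disjoint from $J(g)$; so the semi-buried half of your $\mathcal V$-side argument can be salvaged, and in fact follows even more directly from $F(g)\subset\varphi_2(F(h))$ together with Proposition \ref{prop-homeo}.)

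The paper closes this gap differently and without any survival claim: for every Fatou component $U$ of $h$ there is $j$ with $h^{\circ j}(U)=B_{n,1}$, so the newly created sequences of Julia components $\{J_k\}$ and Fatou components $\{W_k\}$ of $g$ in $\varphi_2(B_{n,1})$ pull back to sequences $\{J_k^U\}$, $\{W_k^U\}$ of genuine Julia and Fatou components of $g$ inside $\varphi_2(U)$ accumulating on $\varphi_2(\partial U)$; combining this with the sequence of Fatou components $U_l$ of $h$ in each component $V$ of $\varphi_1(\mathcal V)$ converging to $\partial V$ (Proposition \ref{prop-homeo}) yields, in every complementary component of $J''$, accumulating Julia \emph{and} Fatou components of $g$, the latter being separators that are automatically disjoint from any Julia component. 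To repair your proposal you should either reproduce this pullback construction on the $\mathcal V$-side, or actually prove that the old $h$-Julia components remain full components of $g$ (which itself essentially requires producing separating Fatou annuli of $g$ near their boundaries, i.e.\ the same pullback). Your closing worry about the interpolation annuli is not an issue: they map into the forward-invariant open set $E_2$ of \eqref{equ-E-2}, hence lie in the Fatou set of $g$, as used in Corollary \ref{cor:home-2}.
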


\begin{proof}
Applying a similar argument to the first paragraph in the proof of Proposition \ref{prop-homeo}, we know that $\varphi_2(J(h))$ (which is not connected) is contained in the Julia set of $g$. In particular, $J':=\varphi_2\circ \varphi_1(J(f))$ is contained in $J(g)$.

Similar to the proof of Proposition \ref{prop-homeo}, we show that there exists a sequence of Julia components $\{J_k\}_{k\in\N}$ of $g$ in $\varphi_2(B_{n,1})$ which converges to $\varphi_2(\partial B_{n,1})$ in the Hausdorff metric. In the following we assume the period of $B_{n,1}$ is $p_n\geq 2$ without loss of generality (the case for $p_n=1$ is completely similar). By Lemma \ref{lema:cover-disk-to-disk-2}, $H:\C\setminus \overline{D}(\xi_{n,1})\to\EC\setminus(\overline{D}(\gamma_{n,2})\cup\{0\})$ is a covering map with degree $\ell\geq 1$ and $H(\gamma_{n,p_n+1})=\eta_{1,1}\subset U_{1,1}$, it follows that the annulus $\varphi_2(A(\gamma_{n,p_n+1},\xi_{n,1}))$ contains a Julia component $J_0$ of $g$ separating $\infty$ from $\varphi_2(\partial B_{n,1})$. On the other hand, by the surgery construction in \S\ref{subsec-annu-to-disk-2}, it follows that the annulus $\varphi_2(A(\xi_{n,1},\gamma_{n,1}))$ is contained in a Fatou component $W_0$ of $g$ and it separates $\infty$ from $\varphi_2(\partial B_{n,1})$ also.

Since $H^{\circ p_n}=h^{\circ p_n}:A(\gamma_{n,1},\partial B_{n,1})\to A(\gamma_{n,p_n+1},\partial B_{n,1})$ is a covering map with degree $d_n$, it follows that the annulus $\varphi_2(A(\gamma_{n,1},\partial B_{n,1}))$ contains a Julia component $J_1:=g^{-p_n}(J_0)\cap \varphi_2(A(\gamma_{n,1},\partial B_{n,1}))$ and a Fatou component $W_1:=g^{-p_n}(W_0)\cap \varphi_2(A(\gamma_{n,1},\partial B_{n,1}))$ such that both $J_1$ and $W_1$ separate $\infty$ from $\varphi_2(\partial B_{n,1})$. Inductively, one can obtain a sequence of Julia components $\{J_k\}_{k\geq 1}$ and a sequence of Fatou components $\{W_k\}_{k\geq 1}$ in $\varphi_2(A(\gamma_{n,1},\partial B_{n,1}))$ converging to $\varphi_2(\partial B_{n,1})$ as $k\to\infty$ such that $g^{\circ p_n}(J_k)=J_{k-1}$, $g^{\circ p_n}(W_k)=W_{k-1}$ and each $J_k$ and $W_k$ separates $\infty$ from $\varphi_2(\partial B_{n,1})$.

Let $U$ be a Fatou component of $h$, which is simply or doubly connected. There exists $j=j(U)\in\N$ such that $h^{\circ j}(U)=B_{n,1}$. Let $\partial_0 U$ be any connected component of the boundary $\partial U$. Then there exist a sequence of Julia components $\{J_k^U\}_{k\in\N}$ and a sequence of Fatou components $\{W_k^U\}_{k\in\N}$ of $g$ in $\varphi_2(U)$ such that they converge to $\varphi_2(\partial_0 U)$ in the Hausdorff metric as $k\to\infty$. Recall that $\MV=F(f)\setminus\MA_n$ is the union of the Fatou components which are not iterated to the cycle $O_n$ eventually. By Proposition \ref{prop-homeo}, for each component $V$ of $\varphi_1(\MV)$, there is a sequence of Fatou components $\{U_l\}_{l\in\N}$ of $h$ in $V$ converging to $\partial V$ in the Hausdorff metric.
Therefore, the sequence of Julia components $\{J_0^{U_l}\}_{l\in\N}$ and Fatou components $\{W_0^{U_l}\}_{l\in\N}$ in $\varphi_2(V)$ converge to $\varphi_2(\partial V)$ as $l\to\infty$.

Up to now, we have proven that for each component $V$ of $\varphi_2\circ \varphi_1(F(f))$, there exist a sequence of different Julia components $\{J_i\}_{i\in\N}$ and a sequence of different Fatou components $\{W_i\}_{i\in\N}$ of $g$ in $V$ such that they converge to $\partial V$ in the Hausdorff metric as $i\to\infty$. This implies that $J'=\varphi_2\circ \varphi_1(J(f))$ is a Julia component of $g$ and $J'$ is semi-buried from every component of $\varphi_2\circ \varphi_1(F(f))$. Hence $J'$ is a buried Julia component of $g$.
\end{proof}

\begin{rmk}
By the construction of $H$, it follows that $f|_{J(f)}:J(f)\to J(f)$ is quasicomformally conjugate to the restriction of $g$ on the buried Julia component $\varphi_2\circ \varphi_1(J(f))$.
Actually, one can obtain the same result by performing a ``big" surgery by combining the surgeries in \S\ref{sec-semi-buried} and in the present section.
\end{rmk}

We now prove Theorem \ref{thm-main} under the assumption that $f$ has at least two attracting periodic orbits.

\begin{proof}[{Proof of Theorem \ref{thm-main} in the case $n\geq 2$}]
The sufficiency of the first assertion follows from Proposition \ref{prop-buried}. For the necessity, we assume that $f$ contains a parabolic periodic point or a Siegel disk. Suppose that there is a rational map $g$ such that $g$ has a buried Julia component $J'$ on which $g$ is conjugate to $f$ on $J(f)$ by a restriction of a quasiconformal map $\varphi$. Since topological conjugacy preserves the multiplier at indifferent periodic points (see \cite{Nai82} or \cite{PM97}), it follows that $g$ has also a parabolic periodic point on $J'$ if $f$ does. But this is impossible since $J'$ is a buried Julia component. On the other hand, suppose that $f$ has a Siegel disk $U$ with period $p\geq 1$. Note that the quasiconformal map $\varphi$ is defined in a neighborhood of $J(f)$. This implies that $U$ contains an annular neighborhood $A$ such that $\varphi(A\cap U)$ is contained in a rotation domain of $g$. However, this is also impossible since $\varphi(J(f))$ is a buried Julia component of $g$.

Now we only need to verify the statements on the degrees. By Corollary \ref{cor:home-2}, the proof is reduced to find a upper bound of $d+\ell+\sum_{i=1}^{n-1}m_i$ as small as possible. By Lemmas \ref{lema:cover-disk-to-disk} and \ref{lema:cover-disk-to-disk-2}, the integers $m_i$ and $\ell$ should satisfy
\begin{equation*}
m_i>\frac{d_{i,1}}{d_i-1} \quad\text{and}\quad \ell>\frac{d_{n,1}+\tfrac{d_n}{m_1}}{d_n-1},
\end{equation*}
where $1\leq i\leq n-1$. Since $d_{i,1}\leq d_i$ and $d_i\geq 2$ for all $1\leq i\leq n$, we have
\begin{equation*}
\frac{d_{i,1}}{d_i-1}\leq \frac{d_i}{d_i-1}\leq 2 \quad\text{and}\quad
\frac{d_{n,1}+\tfrac{d_n}{m_1}}{d_n-1}\leq \frac{d_n}{d_n-1}\cdot\frac{m_1+1}{m_1}\leq 2+\frac{2}{m_1}.
\end{equation*}
This implies that we can choose $m_i=3$ and $\ell=3$, where $1\leq i\leq n-1$. Since $n\leq 2d-2$, the degree of $g$ is at most $d+3+3(2d-3)=7d-6$. In particular, if $f$ is a polynomial, then $n\leq d$ and we have $\deg(g)\leq d+3+3(d-1)=4d$.
\end{proof}

\section{The first case of exactly one attracting cycle}\label{sec-one-orbit}

In this section, we assume that $f$ is a rational map with degree $d\geq 2$ whose Julia set is connected. Furthermore, $f$ is assumed to be post-critically finite in the Fatou set and have exactly one cycle of periodic super-attracting basins. Therefore, all the Fatou components of $f$ will be iterated onto the periodic Fatou components containing this cycle eventually. Let $O$ be this cycle and $p\geq 1$ its period.

We also assume that the Fatou set of $f$ has at least two connected components. The case that $f$ has exactly one connected component will be discussed in the next section (i.e. the dendrite case). Under these assumptions, $f$ cannot be a polynomial since otherwise it has at least two super-attracting cycles or its Fatou set has exactly one connected component.

If $f$ has exactly two Fatou components, then $f^{\circ 2}$ is conjugate to a unicritical polynomial. Indeed, suppose that $D_0$ and $D_\infty$ are the only Fatou components of $f$ containing $0$ and $\infty$ respectively. Then we have $f(D_0)=D_\infty$ and $f(D_\infty)=D_0$ since $f$ is assumed to have exactly one super-attracting cycle. Then $D_0$ and $D_\infty$ are completely invariant under $f^{\circ 2}$. Since $f$ is assumed to be post-critically finite in the Fatou set, without loss of generality, we can assume that $f(0)=\infty$ and $f(\infty)=0$. This implies that $(f^{\circ 2})^{-1}(\infty)=\infty$, $f$ is conjugate to $z\mapsto z^{-d}$ with $d\geq 2$ and the Julia set of $f$ is a round circle.

It is known from \cite{McM88} (see also \cite{DLU05}, \cite{BDGR08}, \cite{GMR13}, \cite{HP12}, \cite{QYY15} and \cite{FY15}) that there exists a rational map $g$ whose Julia set is a set of Cantor circles and contains a buried Julia component which is a Jordan curve. Moreover, on this buried Jordan curve $g$ is quasiconformally conjugate to $z\mapsto z^{-d}$ on the unit circle. Therefore, in the remaining of this section we always assume that $f$ has at least three Fatou components. By \cite[Theorem 5.6.2]{Bea91a}, this implies that $f$ has infinitely many Fatou components.

We denote the immediate super-attracting basin of $f$ as $B_1\mapsto B_2\mapsto\dots\mapsto B_p\mapsto B_1$. Let $U_1\neq B_p$ be a preimage of $B_1$. Without loss of generality, we assume that $\infty\in B_1$ is contained in the super-attracting periodic orbit, $0\in U_1$ and $f(0)=\infty$.

One may find that actually we are in a similar case as \S\ref{sec-second-pert}. The rational map $h$ there and the rational map $f$ here both have exactly one super-attracting basin. The difference is that the Julia set of $f$ is connected while the Julia set of $h$ is disconnected. However, from the construction of the surgery in \S\ref{sec-second-pert}, we know that the connectivity of the Julia set is not an obstruction.

Let $d_0\geq 2$ be the degree of the restriction $f^{\circ p}|_{B_1}:B_1\to B_1$ and denote $d_{0,1}:=\deg(f|_{B_1}:B_1\to B_2)$. We denote $m_1:=\deg(f|_{U_1}:U_1\to B_1)\geq 1$. The proof of the following proposition is completely similar to that of Proposition \ref{prop-buried} (compare Lemma \ref{lema:cover-disk-to-disk-2}) and we omit the details.

\begin{prop}\label{prop-buried-p-1}
There exists a rational map $g$ with degree $\deg(g)=d+\ell$, where $\ell\geq 1$ is the minimal integer satisfying
\begin{equation*}
\ell>\frac{d_{0,1}+\tfrac{d_0}{m_1}}{d_0-1},
\end{equation*}
such that its Julia set contains a buried Julia component on which the dynamics is quasiconformally conjugate to that of $f$ on $J(f)$.
\end{prop}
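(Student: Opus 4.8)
The plan is to rerun the single‑stage surgery of \S\ref{sec-second-pert} with the rôle played there by $h$ now played directly by $f$; the sole structural difference — that $J(f)$ is here \emph{connected} rather than disconnected — is immaterial to the argument, exactly as observed in the paragraph preceding this proposition. I will go through the same three stages. First I would apply B\"ottcher's theorem to the cycle $B_1\mapsto\cdots\mapsto B_p\mapsto B_1$ (with $\infty\in B_1$) to obtain Riemann maps $\phi_j:B_j\to\EC\setminus\overline{\D}$ conjugating the return $f^{\circ p}|_{B_1}$ to $z\mapsto z^{d_0}$, where $d_0=\prod_j d_{0,j}$ and $d_{0,1}=\deg(f|_{B_1}:B_1\to B_2)$. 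Fixing a large $R>1$, I would take equipotentials $\gamma_1,\gamma_{p+1},\xi_1\subset B_1$ of levels $R$, $R^{d_0}$, $S\in(R,R^{d_0})$, set $\gamma_j:=f^{\circ(j-1)}(\gamma_1)\subset B_j$, and — with $\psi:\EC\setminus\overline{D}(\gamma_2)\to\EC\setminus\overline{\D}$ conformal, $\psi(0)=\infty$, and $Q_\ell(z)=z^\ell/S^\ell$ — define $H:=\psi^{-1}\circ Q_\ell\circ\phi_1$ on $\EC\setminus\overline{D}(\xi_1)$ and $H:=f$ on $D(\gamma_1)$. Repeating the Koebe‑distortion and modulus estimates from the proof of Lemma \ref{lema:cover-disk-to-disk-2} verbatim — now with a small round circle about $0$ inside $U_1$, and using $\deg(f|_{U_1}:U_1\to B_1)=m_1$ — should show that, for $R$ large and $S$ close to $R$, the curve $\eta:=H(\gamma_{p+1})$ lies in $U_1$ and its $f$‑image $\kappa:=f(\eta)$ has level exceeding that of $\gamma_{p+1}$ in $B_1$, precisely when $(d_0-1)\ell>d_{0,1}+d_0/m_1$, i.e.\ for the integer $\ell$ in the statement.

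Next, as in \S\ref{subsec-annu-to-disk-2}, the first half of Lemma \ref{lema:ann-to-disk-std} applied with boundary degrees $\ell$ and $d_{0,1}$ furnishes equipotentials $\alpha_1,\beta_1\subset B_1$ with $L(\xi_1)>L(\alpha_1)>L(\beta_1)>L(\gamma_1)$ and a holomorphic branched covering $H|_{A(\alpha_1,\beta_1)}:A(\alpha_1,\beta_1)\to D(\theta)$ of degree $d_{0,1}+\ell$ (onto $\EC\setminus\overline{D}(\theta)$ if $p=1$) with the full count of critical points, for a suitable equipotential $\theta\subset B_2$; a quasiconformal interpolation on the residual annuli $A(\xi_1,\alpha_1)$ and $A(\beta_1,\gamma_1)$ then completes $H$ to a quasiregular self‑map of $\EC$ of degree $d+\ell$. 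Taking $E:=D(\eta)\cup(\EC\setminus\overline{D}(\gamma_{p+1}))\cup\bigcup_{j=2}^{p}D(\gamma_j)$ (with the obvious modification when $p=1$), the level inequalities from the first stage give $H(E)\subset E$, and $H$ is holomorphic off the two interpolation annuli; so Lemma \ref{lema:qc} applies and produces a quasiconformal $\varphi$, normalized by $\varphi(0)=0$ and $\varphi(\infty)=\infty$, with $g:=\varphi\circ H\circ\varphi^{-1}$ rational, $\deg(g)=d+\ell$ (the count exactly as in Corollary \ref{cor:home-2} with the empty sum), and a super‑attracting $2$‑cycle $0\leftrightarrow\infty$.

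Finally I would copy the proof of Proposition \ref{prop-buried}. Since $f$ has at least three — hence, by \cite[Theorem 5.6.2]{Bea91a}, infinitely many — Fatou components, every Fatou component of $g$ is the $\varphi$‑image of one of $f$ and is eventually mapped onto $\varphi(B_1)$; in particular the preimage component $U_1\neq B_p$ of $B_1$ used above exists. The annulus $\varphi(A(\gamma_{p+1},\xi_1))$ carries a Julia component $J_0$ of $g$, and $\varphi(A(\xi_1,\gamma_1))$ lies in a Fatou component $W_0$ of $g$, both separating $\infty$ from $\varphi(\partial B_1)$; since $g^{\circ p}$ is a degree‑$d_0$ covering of $\varphi(A(\gamma_1,\partial B_1))$ onto $\varphi(A(\gamma_{p+1},\partial B_1))$, pulling $J_0$ and $W_0$ back repeatedly yields nested sequences of Julia components $\{J_k\}$ and Fatou components $\{W_k\}$ in $\varphi(B_1)$ converging to $\varphi(\partial B_1)$ in the Hausdorff metric, and pulling these back under $g$ spreads such sequences into every Fatou component of $g$. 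Therefore $J':=\varphi(J(f))$ is a Julia component of $g$ — any strictly larger component would contain a point of some Fatou component of $g$, contradicting the Hausdorff convergence there — it is semi‑buried from every component of $\EC\setminus J'$, and hence buried; and since $H=f$ on the neighbourhood $D(\gamma_1)\supset J(f)$ of $J(f)$, the restriction of $\varphi$ is a quasiconformal conjugacy between $f|_{J(f)}$ and $g|_{J'}$.

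The only point requiring genuine care beyond transcribing \S\ref{sec-second-pert} is the bookkeeping in the first stage — the Koebe and modulus estimates that convert the requirements ``$\eta\subset U_1$'' and ``$L(\kappa)>L(\gamma_{p+1})$'' into the arithmetic threshold $(d_0-1)\ell>d_{0,1}+d_0/m_1$ — together with the observation that the hypothesis of at least three Fatou components is exactly what simultaneously supplies the feed‑in component $U_1$ and the abundance of Fatou components needed to upgrade ``semi‑buried'' to ``buried''. Everything else is a verbatim adaptation of \S\ref{sec-second-pert}.
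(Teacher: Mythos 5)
Your proposal is correct and is essentially the paper's own (omitted) argument: the paper proves this proposition by exactly the adaptation of the second-stage surgery of \S\ref{sec-second-pert} that you describe, with $f$ playing the role of $h$, the same threshold $\ell>(d_{0,1}+d_0/m_1)/(d_0-1)$ from the Koebe/modulus estimates, Lemma \ref{lema:ann-to-disk-std} plus interpolation and Lemma \ref{lema:qc} for straightening, and the Proposition \ref{prop-buried}-style accumulation argument for buriedness. Only a wording slip should be fixed: the regions carrying the accumulating Julia and Fatou components of $g$ are the $\varphi$-images of the Fatou components of $f$ (equivalently the complementary components of $\varphi(J(f))$), not ``Fatou components of $g$'', which is how the rest of your argument in fact uses them.
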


We now prove Theorem \ref{thm-main} under the assumption that $f$ has exactly one attracting periodic orbit and the Fatou set contains infinitely many Fatou components.

\begin{proof}[{Proof of Theorem \ref{thm-main} in the case $n=1$ (the non-dendrite case)}]
The first assertion follows from Proposition \ref{prop-buried-p-1}. We only need to verify the statements on the degrees.
Since $d_0\geq d_{0,1}$, $d_0\geq 2$ and $m_1\geq 1$, we have
\begin{equation*}
\frac{d_{0,1}+\tfrac{d_0}{m_1}}{d_0-1}\leq \frac{d_0}{d_0-1}\cdot\frac{m_1+1}{m_1}\leq 4.
\end{equation*}
This implies that $\ell\leq 5$. The degree of $g$ is at most $d+5\leq 4d$ if $d\geq 2$.
\end{proof}

\section{The second case of exactly one attracting cycle: dendrite}\label{sec-dendrite}

The Julia set of a rational map $f$ is called a \textit{dendrite} if the Julia set is connected and the Fatou set of $f$ is non-empty and contains exactly one component. In this section, we assume that $f$ is a rational map whose Julia set is a dendrite. Since the unique Fatou component of $f$ is completely invariant, without loss of generality, we suppose that $f$ is a polynomial with degree $d\geq 2$ and $0$ is contained in the filled-in Julia set of $f$.

Let $B$ be the unique super-attracting basin of $f$ centered at the infinity. We will construct a quasiregular map $G$ based on $f$ such that the straightened map of $G$ contains a buried Julia component which is homeomorphic to $J(f)$. Let $\phi:B\to\EC\setminus\overline{\D}$ be the B\"{o}ttcher map conjugating $f:B\to B$ holomorphically to $z\mapsto z^d$ from $\EC\setminus\overline{\D}$ to itself, where $\phi(\infty)=\infty$. Recall that an \textit{equipotential} $\gamma$ in $B$ is the preimage by $\phi$ of an Euclidean circle in $\EC\setminus\overline{\D}$ centered at 0. In this section, the \textit{level} of an equipotential $\gamma$ in $B$ will be denoted by $L(\gamma)\in(1,+\infty)$.

\begin{lema}\label{lema:dendrite}
Let $m\geq 5$ be an integer. For any $R\geq 2$, there are five different equipotentials $\gamma_1$, $\gamma_2$, $\xi_1$, $\xi_2$, $\alpha_1$ in $B$, and a holomorphic branched mapping $G|_{\EC\setminus\overline{D}(\xi_1)}: \EC\setminus\overline{D}(\xi_1)\to\EC\setminus\overline{D}(\alpha_1)$ satisfying the following conditions:
\begin{enumerate}
\item $R=L(\gamma_1)<L(\alpha_1)<L(\xi_1)<L(\gamma_2)=R^d<L(\xi_2)$;
\item $G(\infty)=\infty$ and $G:\C\setminus\overline{D}(\xi_1)\to\C\setminus\overline{D}(\alpha_1)$ is a degree $m$ covering map; and
\item $G(\xi_1)=\alpha_1$, $G(\gamma_2)=\xi_2$ and $f(\gamma_1)=\gamma_2$.
\end{enumerate}
\end{lema}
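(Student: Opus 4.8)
The plan is to imitate the proofs of Lemmas~\ref{lema:cover-disk-to-disk} and~\ref{lema:cover-disk-to-disk-2}: define $G$ on the outer disk $\EC\setminus\overline{D}(\xi_1)$ as the pull-back, under the B\"ottcher coordinate $\phi$ of the unique basin $B$, of a rescaled power map $z\mapsto z^m$, and then choose the levels of the five equipotentials so that the chain in~(a) and the two functional equations in~(c) hold simultaneously. Since $J(f)$ is a dendrite it has empty interior, so $\EC\setminus\overline{D}(\gamma)\subset B$ for every equipotential $\gamma$, and $\phi$ maps $\EC\setminus\overline{D}(\phi^{-1}(\T_\rho))$ conformally onto $\{|z|>\rho\}\cup\{\infty\}$ for each $\rho>1$.

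First I would fix the levels. Choose any $t>1$; since $d\geq 2$ and $m\geq 5$ we have $d-(d-1)/m>1$, so we may pick $s\in\big(t,\,t^{\,d-(d-1)/m}\big)$ and set $L(\alpha_1):=t$, $L(\xi_1):=s$. Elementary arithmetic shows that $s<t^{\,d-(d-1)/m}$ is equivalent to $(s^m/t)^{1/(m-1)}<t^d$, while $s>t$ forces $(s^m/t)^{1/(m-1)}>s$; in particular $s<t^d$ as well. Hence I can choose $L(\gamma_1)\in(s^{1/d},t)$ close enough to $t$ that $L(\gamma_2):=L(\gamma_1)^d$ lies in the nonempty interval $\big((s^m/t)^{1/(m-1)},\,t^d\big)$, and then put $L(\xi_2):=t\,L(\gamma_2)^m/s^m$, which satisfies $L(\xi_2)>L(\gamma_2)$ precisely because $t\,L(\gamma_2)^{m-1}>s^m$. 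Now $1<L(\gamma_1)<t<s<L(\gamma_2)<L(\xi_2)$, so all five levels exceed $1$ and are pairwise distinct; letting $\gamma_1,\alpha_1,\xi_1,\gamma_2,\xi_2$ be the $\phi$-preimages of the corresponding circles yields five pairwise disjoint equipotentials satisfying~(a).

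Next I would define $G$. Set $Q_m(z):=t\,z^m/s^m$; it is an unbranched degree-$m$ holomorphic covering of $\{|z|>s\}$ onto $\{|w|>t\}$ with $Q_m(\T_s)=\T_t$, and, with $Q_m(\infty):=\infty$, a proper degree-$m$ branched covering whose only critical point is $\infty$. Put $G:=\phi^{-1}\circ Q_m\circ\phi:\EC\setminus\overline{D}(\xi_1)\to\EC\setminus\overline{D}(\alpha_1)$: it is holomorphic, $G(\infty)=\infty$, and it restricts to a degree-$m$ covering $\C\setminus\overline{D}(\xi_1)\to\C\setminus\overline{D}(\alpha_1)$ with no critical points, which is~(b). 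Finally~(c) follows by tracking levels: $G(\xi_1)=\phi^{-1}(Q_m(\T_s))=\phi^{-1}(\T_t)=\alpha_1$; $G(\gamma_2)=\phi^{-1}(\T_{t L(\gamma_2)^m/s^m})=\phi^{-1}(\T_{L(\xi_2)})=\xi_2$; and since $\phi$ conjugates $f|_B$ to $z\mapsto z^d$, $f(\gamma_1)=\phi^{-1}(\T_{L(\gamma_1)^d})=\phi^{-1}(\T_{L(\gamma_2)})=\gamma_2$.

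I expect the one delicate point to be the bookkeeping in the middle step: the two ``push-out'' requirements — $f(\gamma_1)$ must land strictly outside $\xi_1$, i.e. $L(\gamma_1)^d>s$, and $G(\gamma_2)$ must land strictly outside $\gamma_2$, i.e. $t\,L(\gamma_2)^{m-1}>s^m$ — have to be made compatible with the nesting $L(\gamma_1)<L(\alpha_1)<L(\xi_1)<L(\gamma_2)$, and it is exactly the inequality $s<t^{\,d-(d-1)/m}$, available because $d\geq 2$, that makes this possible. I would also remark that $t$, and hence $s$ and all the gaps between consecutive levels, remains free; this freedom will be used later to interpolate $G$ quasiconformally across the annulus $A(\gamma_1,\xi_1)$ in the subsequent construction.
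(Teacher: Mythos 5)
Your proof is correct and takes essentially the same route as the paper: both conjugate the rescaled power map $Q_m(z)=L(\alpha_1)\,z^m/L(\xi_1)^m$ by the B\"ottcher coordinate $\phi$ and then arrange the equipotential levels so that (a) and (c) hold, the only difference being that the paper fixes the levels explicitly as $R$, $R^{(d+4)/5}$, $R^{(4d+1)/5}$, $R^d$ and $S_1R^{dm}/S_2^m$ (with $m\geq 5$ ensuring the last exceeds $R^d$, and with the explicit moduli of \eqref{equ-mod-1} reused in Lemma \ref{lema:annu-disk-3}), whereas you keep $t$ and $s$ free and derive the same inequalities. Your level bookkeeping checks out, so the argument is valid as a proof of the lemma.
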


\begin{proof}
Let $R\geq 2$ and let $S_1$, $S_2$, $S_3$ be the following constants satisfying $R<S_1<S_2<R^d<S_3$, where
\begin{equation}\label{equ-S-1-2}
S_1=R^{(d+4)/5}, \quad S_2=R^{(4d+1)/5} \quad \text{and} \quad S_3=S_1 R^{dm}/S_2^m>R^d.
\end{equation}
We define $\gamma_1$, $\alpha_1$, $\xi_1$, $\gamma_2$ and $\xi_2$ as the equipotentials in $B$ with the levels $R$, $S_1$, $S_2$, $R^d$ and $S_3$ respectively. See Figure \ref{Fig_surgery-3}.

\begin{figure}[!htpb]
  \setlength{\unitlength}{1mm}
  \centering
  \includegraphics[width=0.95\textwidth]{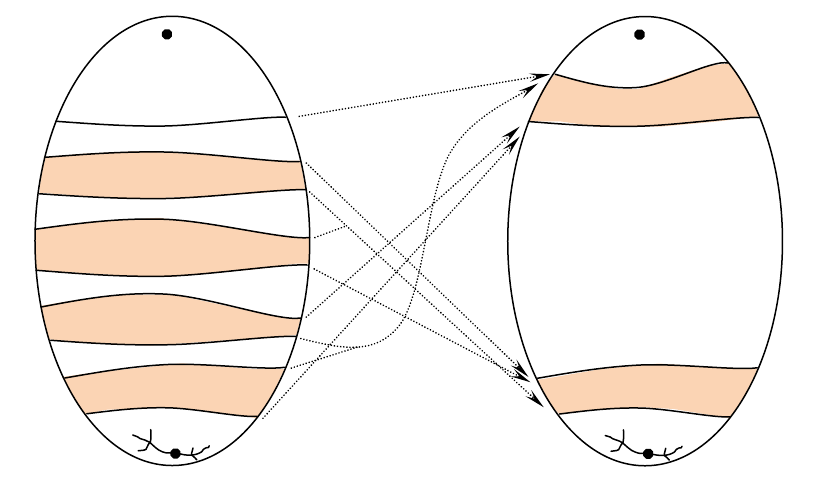}
  \put(-95,6){$0$}
  \put(-94,64){$\infty$}
  \put(-112,10){$\gamma_1$}
  \put(-115,14){$\alpha_1$}
  \put(-117,20){$\beta_1$}
  \put(-119,25){$\alpha_2$}
  \put(-119,30){$\beta_2$}
  \put(-120,36){$\alpha_3$}
  \put(-119,41){$\beta_3$}
  \put(-118,47){$\xi_1$}
  \put(-117,53){$\gamma_2$}
  \put(-12,9){$\gamma_1$}
  \put(-9,15){$\alpha_1$}
  \put(-8,53){$\gamma_2$}
  \put(-12,61){$\xi_2$}
  \put(-25,6){$0$}
  \put(-24,64){$\infty$}
  \put(-62,45){$G$}
  \caption{The sketch of the definition of the map $G|_{\EC\setminus\overline{D}(\xi_1)}: \EC\setminus\overline{D}(\xi_1)\to\EC\setminus\overline{D}(\alpha_1)$. For further references, some other equipotentials are drawn also in the basin of infinity and the dendrite is also marked.}
  \label{Fig_surgery-3}
\end{figure}

Define $Q_m(z)=S_1 z^m/S_2^m:\EC\setminus\overline{\D}(0,S_2)\to\EC\setminus\overline{\D}(0,S_1)$.
Then $Q_m(\EC\setminus\overline{\D}(0,R^d))=\EC\setminus\overline{\D}(0,S_3)$. Recall that $\phi:B\to\EC\setminus\overline{\D}$ is the B\"{o}ttcher map conjugating $f:B\to B$ holomorphically to $z\mapsto z^d$, where $\phi(\infty)=\infty$. Then $G:=\phi^{-1}\circ Q_m\circ\phi:\EC\setminus\overline{D}(\xi_1)\to\EC\setminus\overline{D}(\alpha_1)$ is the required holomorphic branched covering map.
\end{proof}

As the surgery performed before, we need to construct some annulus-to-disk branched covering map and annulus-to-annulus covering maps. Recall that $m\geq 5$ is an integer introduced in Lemma \ref{lema:dendrite}.

\begin{lema}\label{lema:annu-disk-3-key}
There exists $R_0\geq 2$ such that if $R\geq R_0$, then there exist two different equipotentials $\alpha_2$, $\beta_2$ in $B$ and a holomorphic covering mapping $G|_{A(\alpha_2,\beta_2)}:A(\alpha_2,\beta_2)\to A(\gamma_2,\alpha_1)$ with degree $m$ such that\footnote{The equipotentials $\gamma_1$, $\alpha_1$, $\xi_1$, $\gamma_2$ and $\xi_2$ are defined as in Lemma \ref{lema:dendrite}. }
\begin{enumerate}
\item $L(\alpha_1)<L(\alpha_2)<L(\beta_2)<L(\xi_1)$;
\item $G|_{A(\alpha_2,\beta_2)}$ can be extended continuously to $\alpha_2\cup\beta_2$ by two degree $m$ covering maps $G|_{\alpha_2}:\alpha_2\to\gamma_2$ and  $G|_{\beta_2}:\beta_2\to\alpha_1$; and
\item $\Mod(A(\alpha_1,\alpha_2))\geq\frac{3}{\pi}\log 2$ and $\Mod(A(\beta_2,\xi_1))\geq\frac{1}{m\pi}\log (4R+2)$.
\end{enumerate}
\end{lema}

\begin{proof}
By \eqref{equ-S-1-2}, we have
\begin{equation}\label{equ-mod-1}
\Mod(A(\alpha_1,\xi_1))=\frac{3(d-1)}{10\pi}\log R \quad \text{and}\quad \Mod(A(\alpha_1,\gamma_2))=\frac{2(d-1)}{5\pi}\log R.
\end{equation}
Since $m\geq 5$, we have
\begin{equation}\label{equ:mm}
\frac{\Mod(A(\alpha_1,\gamma_2))}{m}=\frac{2(d-1)}{5m\pi}\log R<\frac{3(d-1)}{10\pi}\log R=\Mod(A(\alpha_1,\xi_1)).
\end{equation}
This implies that one can choose an essential subannulus\footnote{An annulus $A_1$ is called an \textit{essential subannulus} of the annulus $A_2$ if $A_1$ separates two boundaries of $A_2$.} $A(\alpha_2,\beta_2)$ of $A(\alpha_1,\xi_1)$ with $\Mod(A(\alpha_2,\beta_2))=\frac{2(d-1)}{5m\pi}\log R$ such that $G|_{A(\alpha_2,\beta_2)}:A(\alpha_2,\beta_2)\to A(\gamma_2,\alpha_1)$ is a holomorphic covering mapping with degree $m$. Moreover, $\alpha_2$ and $\beta_2$ can be chosen such that $L(\alpha_1)<L(\alpha_2)<L(\beta_2)<L(\xi_1)$. See Figure \ref{Fig_surgery-3}.

\medskip

Note that $\alpha_1$, $\alpha_2$, $\beta_2$ and $\xi_1$ are equipotentials. We have
\begin{equation*}
\Mod(A(\alpha_1,\alpha_2))+\Mod(A(\alpha_2,\beta_2))+\Mod(A(\beta_2,\xi_1))=\Mod(A(\alpha_1,\xi_1)).
\end{equation*}
In order to obtain (c), it is sufficient to guarantee that
\begin{equation}\label{equ:ineq-1}
\frac{3}{\pi}\log 2+\frac{2(d-1)}{5m\pi}\log R+\frac{1}{m\pi}\log (4R+2)\leq \frac{3(d-1)}{10\pi}\log R.
\end{equation}
Since $d\geq 2$ and $m\geq 5$, we have $(3m-4)(d-1)>10$. This is equivalent to
\begin{equation*}
\frac{2(d-1)}{5m}+\frac{1}{m}<\frac{3(d-1)}{10}.
\end{equation*}
Therefore, there exists a large number $R_0\geq 2$ such that if $R\geq R_0$, then
\begin{equation*}
3\log 2+\frac{\log 5}{m}\leq \left(\frac{3(d-1)}{10\pi}-\frac{2(d-1)}{5m}-\frac{1}{m}\right)\log R.
\end{equation*}
This implies that \eqref{equ:ineq-1} holds and the proof is complete.
\end{proof}

Let $R_0\geq 2$ be the number introduced in Lemma \ref{lema:annu-disk-3-key}.

\begin{lema}\label{lema:annu-disk-3}
For any $R\geq R_0$, there exist three equipotentials $\beta_1$, $\alpha_3$, $\beta_3$ in $B$ and two holomorphic branched covering maps $G|_{A(\alpha_1,\beta_1)}:A(\alpha_1,\beta_1)\to \EC\setminus\overline{D}(\xi_2)$ with degree $d+m$ and $G|_{A(\alpha_3,\beta_3)}:A(\alpha_3,\beta_3)\to D(\gamma_1)$ with degree $2m$ such that\footnote{The equipotentials $\gamma_1$, $\alpha_1$, $\xi_1$, $\gamma_2$ and $\xi_2$ are defined as in Lemma \ref{lema:dendrite}, and $\alpha_2$, $\beta_2$ are defined as in Lemma \ref{lema:annu-disk-3-key}. }
\begin{enumerate}
\item $L(\alpha_1)<L(\beta_1)<L(\alpha_2)<L(\beta_2)<L(\alpha_3)<L(\beta_3)<L(\xi_1)$;
\item $G|_{A(\alpha_1,\beta_1)}$ has $d+m$ critical points in $A(\alpha_1,\beta_1)$ and $G|_{A(\alpha_1,\beta_1)}$ can be extended continuously to $\alpha_1\cup\beta_1$ by a degree $d$ covering $G|_{\alpha_1}:\alpha_1\to\xi_2$ and a degree $m$ covering $G|_{\beta_1}:\beta_1\to\xi_2$;
\item $G|_{A(\alpha_3,\beta_3)}$ has $2m$ critical points in $A(\alpha_3,\beta_3)$ and $G|_{A(\alpha_3,\beta_3)}$ can be extended continuously to $\alpha_3\cup\beta_3$ by two degree $m$ covering maps $G|_{\alpha_3}:\alpha_3\to\gamma_1$ and $G|_{\beta_3}:\beta_3\to\gamma_1$; and
\item $G|_{A(\alpha_3,\beta_3)}$ has exactly two different critical values which are contained in the Julia set of $f$ and $G^{-1}(J(f))\cap A(\alpha_3,\beta_3)$ is a connected compact set separating $\alpha_3$ and $\beta_3$.
\end{enumerate}
\end{lema}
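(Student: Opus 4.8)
The plan is to construct the three branched covers $G|_{A(\alpha_1,\beta_1)}$, $G|_{A(\alpha_3,\beta_3)}$ (and, implicitly, to place $\beta_1$, $\alpha_3$, $\beta_3$ among the equipotentials already fixed in Lemmas \ref{lema:dendrite} and \ref{lema:annu-disk-3-key}) by invoking the annulus-to-disk and annulus-to-annulus lemmas of \S\ref{sec-semi-buried}. First I would check that there is enough room left in $A(\alpha_1,\xi_1)$ to accommodate all the required subannuli. From \eqref{equ-mod-1} we know $\Mod(A(\alpha_1,\xi_1))=\frac{3(d-1)}{10\pi}\log R$, which tends to $+\infty$ as $R\to+\infty$. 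Since $\alpha_2$, $\beta_2$ (from Lemma \ref{lema:annu-disk-3-key}) already sit inside $A(\alpha_1,\xi_1)$, I first shrink them slightly if necessary so that the three gaps $A(\alpha_1,\alpha_2)$, $A(\alpha_2,\beta_2)$ and $A(\beta_2,\xi_1)$ each have modulus bounded below by, say, $\frac{2}{\pi}\log 2$; by the Remark after Lemma \ref{lema:annu-disk-3-key} this is possible once $R$ is large. This leaves us two `slots': the inner one $A(\alpha_1,\beta_2)$, in which we must carve out $A(\alpha_1,\beta_1)$ with $L(\beta_1)<L(\alpha_2)$, and the outer one $A(\beta_2,\xi_1)$, in which we must carve out $A(\alpha_3,\beta_3)$.

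For the inner cover I would apply Lemma \ref{lema:annu-disk-std}(a)--(c) with $\ell=d$ and $m$ the given integer: it produces a holomorphic branched covering $\psi:\A_r\to\D$ of degree $d+m$ with $d+m$ critical points, extending continuously to the boundary by a degree-$d$ covering on one side and a degree-$m$ covering on the other, with $\Mod(\A_r)<\tfrac{2}{\pi}\log 2$. Composing $\psi$ on the source side with a conformal identification of the chosen subannulus $A(\alpha_1,\beta_1)\subset A(\alpha_1,\alpha_2)$ with $\A_r$, and on the target side with a conformal map of $\D$ onto $\EC\setminus\overline{D}(\xi_2)$ (an unbounded Jordan domain), gives $G|_{A(\alpha_1,\beta_1)}$; one has to choose the labelling of the two boundary circles of $\A_r$ so that the degree-$d$ covering lands on $\alpha_1$ (which is the `dynamical' side, since $f(\gamma_1)=\gamma_2$ forces degree $d$ along $\alpha_1$) and the degree-$m$ side lands on $\beta_1$. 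Since $\Mod(\A_r)<\tfrac{2}{\pi}\log 2$ and the slot has modulus $\geq \tfrac{2}{\pi}\log 2$, the equipotential $\beta_1$ with $L(\alpha_1)<L(\beta_1)<L(\alpha_2)$ exists, giving (a) and (b). The outer cover $G|_{A(\alpha_3,\beta_3)}\colon A(\alpha_3,\beta_3)\to D(\gamma_1)$ of degree $2m$ is obtained the same way, using Lemma \ref{lema:annu-disk-std} in the balanced case $\ell=m$: take $K>1$ (say $K$ large), get $\psi\colon\A_r\to\D$ of degree $2m$ with exactly two critical values $\pm 1/K$, with $\Mod(\A_r)<\tfrac{1}{m\pi}\log(2K+1)$, and with the crucial property (f) that $\psi^{-1}(\Gamma)$ separates the two boundary components of $\A_r$ whenever $\Gamma\subset\D$ joins $1/K$ to $-1/K$. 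Compose with a conformal map $\D\to D(\gamma_1)$ sending $[-1/K,1/K]$ (or a suitable arc through the two marked points) onto an arc in $D(\gamma_1)$ meeting $J(f)$ exactly twice at the images of the two critical values; because $J(f)$ is a dendrite inside $D(\gamma_1)$, we may and do choose the Riemann map $\D\to D(\gamma_1)$ so that the two critical values lie on $J(f)$, and then $\Gamma$ can be taken to be the unique arc of the dendrite joining them, so $G^{-1}(J(f))\cap A(\alpha_3,\beta_3)=\psi^{-1}(\Gamma)$ is a connected compact separator of $\alpha_3$ and $\beta_3$. This gives (c) and (d). Finally, $\alpha_3$, $\beta_3$ can be nested inside the slot $A(\beta_2,\xi_1)$ with $L(\beta_2)<L(\alpha_3)<L(\beta_3)<L(\xi_1)$ because $\Mod(A(\beta_2,\xi_1))\geq \tfrac{2}{\pi}\log 2 > \tfrac{1}{m\pi}\log(2K+1)$ once $K$ is fixed and $m\geq 5$ (enlarging $R$ if needed to keep the slot wide). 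Ordering all levels gives (a).

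The main obstacle I anticipate is not any single modulus estimate — each is routine given Lemma \ref{lema:ann-to-disk-std} — but rather the bookkeeping of orientations and boundary degrees so that the six new boundary coverings ($G$ on $\alpha_1,\beta_1,\alpha_2,\beta_2,\alpha_3,\beta_3$ plus the pieces from Lemmas \ref{lema:dendrite}, \ref{lema:annu-disk-3-key}) glue into a single globally continuous quasiregular map $G$ on $\EC$ with the prescribed images; in particular one must verify that on each of the leftover `gap' annuli $A(\beta_1,\alpha_2)$, $A(\beta_2,\alpha_3)$, $A(\beta_3,\xi_1)$ the target annuli match up (the same pair of boundary curves, with compatible covering degrees) so that a quasiconformal interpolation exists, exactly as in \S\ref{subsec-annu-to-disk}. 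Once the degrees on the two sides of each interpolation annulus agree — which is forced by the choices $\ell=d$, $m$, and $2m$ above together with $f(\gamma_1)=\gamma_2$ being degree $d$ — the interpolations exist because all the curves involved are analytic (hence quasicircles), and the proof of the lemma is complete. The requirement $m\ge 5$ enters precisely to make $\tfrac{1}{m\pi}\log(2K+1)$ small enough to fit two disjoint such annuli, with room to spare, inside $A(\alpha_1,\xi_1)$.
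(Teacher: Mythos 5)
Your overall route is the same as the paper's: handle $A(\alpha_1,\beta_1)$ with parts (a)--(c) of Lemma \ref{lema:ann-to-disk-std} once $\Mod(A(\alpha_1,\alpha_2))\geq\tfrac{2}{\pi}\log 2$, and handle $A(\alpha_3,\beta_3)$ with parts (d)--(f), placing the two critical values on $J(f)$ via a Riemann map of $D(\gamma_1)$. But there is a genuine gap in your treatment of the parameter $K$. You fix $K$ in advance and then propose to ``enlarge $R$ if needed'', checking only $\tfrac{2}{\pi}\log 2>\tfrac{1}{m\pi}\log(2K+1)$. These two choices are incompatible: condition (d) forces the critical values $\pm 1/K$ to lie on the image of $J(f)$ under a conformal map $\psi_1:D(\gamma_1)\to\D$, and since $\Mod(D(\gamma_1)\setminus J(f))=\tfrac{1}{2\pi}\log R$, the continuum $\psi_1(J(f))$ has hyperbolic diameter tending to $0$ as $R\to\infty$ (normalizing $0\in\psi_1(J(f))$, its farthest point $b_0$ satisfies $1/R\leq|b_0|\leq 4/R$). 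Hence for large $R$ no M\"obius renormalization can place two points of this set at the fixed hyperbolic distance of $\pm 1/K$; one is forced to take $1/K$ comparable to $1/R$, i.e. $K\asymp R$ (the paper takes $b_1=\tfrac{1-\sqrt{1-b_0^2}}{b_0}\geq\tfrac{1}{2R}$, so $K\leq 2R$). Consequently the modulus you must reserve for $A(\alpha_3,\beta_3)$ is not a constant but grows like $\tfrac{1}{m\pi}\log(4R+1)$, and the real content of the proof is the comparison of growth rates inside $A(\alpha_1,\xi_1)$, using \eqref{equ-mod-1}:
\begin{equation*}
\frac{2}{\pi}\log 2+\frac{2(d-1)}{5m\pi}\log R+\frac{1}{m\pi}\log (4R+1)<\frac{3(d-1)}{10\pi}\log R,
\end{equation*}
which for $R$ large reduces to $\tfrac{2(d-1)}{5m}+\tfrac{1}{m}<\tfrac{3(d-1)}{10}$, i.e. $(3m-4)(d-1)>10$; this is exactly where $m\geq 5$ and $d\geq 2$ enter. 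As written, your argument would ``work'' for every $m$, which the modulus bookkeeping does not support; the dependence of $K$ on $R$ and the resulting inequality are the missing step.

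Two smaller points. Your identity $G^{-1}(J(f))\cap A(\alpha_3,\beta_3)=\psi^{-1}(\Gamma)$ with $\Gamma$ an arc of the dendrite is not an equality, since the preimage involves all of $J(f)$; the clean fix (and what the paper does) is to apply property (f) of Lemma \ref{lema:ann-to-disk-std} directly to the full continuum $\psi_2\circ\psi_1(J(f))$, which is connected, compact and contains $\pm b_1$, so its $\psi$-preimage is a connected compact separator of $\alpha_3$ and $\beta_3$. Also, the gluing and interpolation bookkeeping you flag as the main obstacle is not part of this lemma: it is carried out afterwards, and is routine once the boundary degrees in (b)--(c) match ($d$ with $d$ and $m$ with $m$ on each gap annulus), all curves involved being analytic.
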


\begin{proof}
By Lemma \ref{lema:annu-disk-3-key}(c), we have $\Mod(A(\alpha_1,\alpha_2))> \tfrac{2}{\pi}\log 2$. Then the existence of  $G|_{A(\alpha_1,\beta_1)}:A(\alpha_1,\beta_1)\to \EC\setminus\overline{D}(\xi_2)$ satisfying the first two conditions is guaranteed by the first half statement of Lemma \ref{lema:ann-to-disk-std}. Hence it is sufficient to prove the existence of $G|_{A(\alpha_3,\beta_3)}:A(\alpha_3,\beta_3)\to D(\gamma_1)$.

Let $\psi_1:D(\gamma_1)\to \D$ be a conformal isomorphism such that $0\in\psi_1(J(f))$. Let $b_0\in\psi_1(J(f))$ be a point such that
\begin{equation*}
|b_0|=\sup\{|z|:\,z\in\psi_1(J(f))\}.
\end{equation*}
Since $\Mod(\D\setminus\psi_1(J(f)))=\Mod(D(\gamma_1)\setminus J(f))=\tfrac{1}{2\pi}\log R$, it follows that $|b_0|\geq 1/R$. Without loss of generality, we assume that $b_0$ is a real positive number. Let $\psi_2:\D\to\D$ be an isomorphism such that
\begin{equation*}
\psi_2(z)=\frac{z-b_1}{1-b_1 z}, \text{ where } b_1=\frac{1-\sqrt{1-b_0^2}}{b_0}\in(0,1).
\end{equation*}
Then $\psi_2$ maps $0$, $b_0$ to $-b_1$, $b_1$ respectively. Moreover, since $b_0\geq 1/R$, we have
\begin{equation*}
b_1\geq R-\sqrt{R^2-1}\geq 1/(2R).
\end{equation*}

By the second conclusion of Lemma \ref{lema:ann-to-disk-std}, there exists a number $r>0$ and a holomorphic branched covering map $\psi:\A_r\to \D$ with degree $2m$ such that
\begin{enumerate}
\item[(1)] $\psi$ has exactly $2m$ critical points in $\A_r$ and exactly $2$ critical values at $\pm\,b_1$;
\item[(2)] $\psi$ can be extended continuously to $\partial\A_r$ by two degree $m$ covering maps $\psi|_{\partial\D}:\partial\D\to\partial\D$ and $\psi|_{\T_r}:\T_r\to\partial\D$;
\item[(3)] $\Mod(\A_r)<\tfrac{1}{m\pi}\log (\tfrac{2}{b_1}+1)\leq \tfrac{1}{m\pi}\log (4R+1)$; and
\item[(4)] $\psi^{-1}(\psi_2(\psi_1(J(f))))\subset\A_r$ is a connected compact set separating $0$ from $\infty$.
\end{enumerate}
Therefore, to prove the existence of $G|_{A(\alpha_3,\beta_3)}:A(\alpha_3,\beta_3)\to D(\gamma_1)$ satisfying (c) and (d), it is sufficient to guarantee that $\Mod(A(\beta_2,\xi_1))\geq \tfrac{1}{m\pi}\log (4R+1)$. This is true by Lemma \ref{lema:annu-disk-3-key}(c).
\end{proof}

In the following we fix the number $R\geq R_0$.
Define $G|_{D(\gamma_1)}:=f|_{D(\gamma_1)}$. Then $G$ is defined on the Riemann sphere except on the annuli $A(\gamma_1,\alpha_1)$, $A(\beta_1,\alpha_2)$, $A(\beta_2,\alpha_3)$, $A(\beta_3,\xi_1)$. Since all of the connected components of the boundaries of these four annuli, together with their images $\gamma_1$, $\alpha_1$, $\gamma_2$ and $\xi_2$, are quasicircles, one can make an interpolation such that the resulting map $G$ satisfies
\begin{enumerate}
\item $G|_{A(\gamma_1,\alpha_1)}:A(\gamma_1,\alpha_1)\to A(\gamma_2,\xi_2)$ is a degree $d$ covering map;
\item $G|_{A(\beta_1,\alpha_2)}:A(\beta_1,\alpha_2)\to A(\xi_2,\gamma_2)$ is a degree $m$ covering map;
\item $G|_{A(\beta_2,\alpha_3)}:A(\beta_2,\alpha_3)\to A(\alpha_1,\gamma_1)$ is a degree $m$ covering map;
\item $G|_{A(\beta_3,\xi_1)}:A(\beta_3,\xi_1)\to A(\gamma_1,\alpha_1)$ is a degree $m$ covering map; and
\item The four annular mapping above are local quasiconformal.
\end{enumerate}

Now we have a quasiregular map $G$ defined from the Riemann sphere to itself whose dynamics is sketched in Figure \ref{Fig_surgery-3}.

\begin{cor}\label{cor:home-2-3}
There is a quasiconformal map $\varphi:\EC\to\EC$ such that $g:=\varphi\circ G\circ \varphi^{-1}$ is rational map satisfying $\varphi(0)=0$, $\varphi(\infty)=\infty$ and $\deg(g)=d+2m$.
\end{cor}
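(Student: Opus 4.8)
The plan is to apply Shishikura's fundamental lemma for quasiconformal surgery (Lemma~\ref{lema:qc}) to the quasiregular map $G$, following exactly the scheme used for Corollaries~\ref{cor:home} and~\ref{cor:home-2}. Recall that the levels fixed in \eqref{equ-S-1-2} and Lemma~\ref{lema:dendrite} satisfy $L(\gamma_1)<L(\alpha_1)<L(\xi_1)<L(\gamma_2)<L(\xi_2)$, that $G(\gamma_2)=\xi_2$, and that $G$ is holomorphic away from the four interpolation annuli $A(\gamma_1,\alpha_1)$, $A(\beta_1,\alpha_2)$, $A(\beta_2,\alpha_3)$, $A(\beta_3,\xi_1)$, each of which has level at most $L(\xi_1)$. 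I would take
\[
E:=\EC\setminus\overline{D}(\gamma_2),
\]
an open neighbourhood of $\infty$ disjoint from those four annuli, so that $\partial G/\partial\overline z=0$ on $E$. Since $G$ maps $\EC\setminus\overline{D}(\gamma_2)$ holomorphically onto $\EC\setminus\overline{D}(\xi_2)$ and $L(\xi_2)>L(\gamma_2)$, we obtain $G(E)\subset E$, which is condition~(a) of Lemma~\ref{lema:qc}.

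For condition~(b) I would check that the four interpolation annuli lie in $G^{-2}(E)$, so that one may take $N=2$. From the list of images of $G$ recorded after Lemma~\ref{lema:annu-disk-3} one reads off $G(A(\gamma_1,\alpha_1))=A(\gamma_2,\xi_2)\subset E$ and $G(A(\beta_1,\alpha_2))=A(\xi_2,\gamma_2)\subset E$, while $G(A(\beta_2,\alpha_3))=A(\alpha_1,\gamma_1)=A(\gamma_1,\alpha_1)$ and $G(A(\beta_3,\xi_1))=A(\gamma_1,\alpha_1)$; applying $G$ once more carries these last two into $E$. Hence $\partial G/\partial\overline z=0$ a.e.\ on $\EC\setminus G^{-2}(E)$, and Lemma~\ref{lema:qc} yields a quasiconformal map $\varphi_0:\EC\to\EC$ with $\varphi_0\circ G\circ\varphi_0^{-1}$ rational. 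Post-composing $\varphi_0$ with the Möbius transformation sending $\varphi_0(0)$, $\varphi_0(\infty)$ to $0$, $\infty$ keeps the conjugate rational and produces the normalized $\varphi$ with $\varphi(0)=0$ and $\varphi(\infty)=\infty$.

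It remains to compute $\deg(g)$, which equals the topological degree of the branched covering $G$. I would count the preimages of a generic point $w$ with $L(w)>L(\xi_2)$. Among the nine disk/annulus pieces into which the nested equipotentials $\gamma_1,\alpha_1,\beta_1,\alpha_2,\beta_2,\alpha_3,\beta_3,\xi_1$ cut $\EC$, every piece except $\EC\setminus\overline{D}(\xi_1)$ and $A(\alpha_1,\beta_1)$ is mapped by $G$ into $\overline{D}(\xi_2)$ (immediate from Lemmas~\ref{lema:dendrite}--\ref{lema:annu-disk-3} and the interpolation following them), hence contributes no preimage of $w$; the piece $\EC\setminus\overline{D}(\xi_1)$ covers $\EC\setminus\overline{D}(\alpha_1)\ni w$ with degree $m$, and $A(\alpha_1,\beta_1)$ covers $\EC\setminus\overline{D}(\xi_2)\ni w$ with degree $d+m$. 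Thus $w$ has exactly $m+(d+m)=d+2m$ preimages, so $\deg(g)=d+2m$. The only point requiring care is the bookkeeping of which piece maps onto which region, but this is dictated entirely by the level inequalities already fixed, so no genuine difficulty arises.
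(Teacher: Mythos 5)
Your proposal is correct and follows essentially the same route as the paper: it takes $E=\EC\setminus\overline{D}(\gamma_2)$, verifies $G(E)\subset E$ and that the four interpolation annuli lie in $G^{-2}(E)$ so that Lemma \ref{lema:qc} applies with $N=2$, exactly as in the paper's proof. The extra details you supply (the Möbius normalization and the preimage count giving $\deg(g)=d+2m$) are accurate and consistent with the construction in Lemmas \ref{lema:dendrite}--\ref{lema:annu-disk-3}.
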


\begin{proof}
For the quasiregular map $G$, we define an open set $E:=\EC\setminus\overline{D}(\gamma_2)$.
According to Lemma \ref{lema:dendrite}, we have $G(E)\subset E$. By Lemmas \ref{lema:dendrite}, \ref{lema:annu-disk-3-key} and \ref{lema:annu-disk-3}, and the annulus-to-annulus construction of $G$, it follows that $G$ is holomorphic outside of $G^{-2}(\overline{E})$. This implies that $\partial G/\partial \overline{z}=0$ on $E$ and a.e. on $\EC\setminus G^{-2}(E)$. Then the result follows immediately from Lemma \ref{lema:qc}.
\end{proof}

\begin{prop}\label{prop-buried-3}
The set $\varphi(J(f))$ is a buried Julia component of $g$ and the restriction of $g$ on $\varphi(J(f))$ is quasiconformally conjugate to $f$ on $J(f)$.
\end{prop}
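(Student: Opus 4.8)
The plan is to mirror, with appropriate modifications, the arguments of Propositions \ref{prop-homeo} and \ref{prop-buried}, now carried out entirely inside the single super-attracting basin $B$ of infinity. There are two things to establish: (i) $\varphi(J(f))$ is a Julia component of $g$; and (ii) it is semi-buried from every complementary component, hence buried. Because $f$ has a \emph{connected} Julia set which is a dendrite, the Fatou set of $g$ will consist of the images under $\varphi$ of the preimages of $B$ under $G$, and we must show each such Fatou component has a nested sequence of Julia and Fatou components accumulating on its boundary.

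First I would record the basic dynamical picture of $g$ inherited from $G$ via the conjugacy $\varphi$ of Corollary \ref{cor:home-2-3}. The set $\varphi(E)=\varphi(\EC\setminus\overline{D}(\gamma_2))$ lies in the immediate basin of a super-attracting fixed point (namely $\infty$, after straightening), and by Lemma \ref{lema:dendrite} the curve $\gamma_{1}$ is mapped by $G$ forward to $\gamma_{2}$, so $\varphi(D(\gamma_{1}))$ is a Fatou component of $g$ mapped onto the immediate basin; a standard Montel argument (exactly as in the first paragraph of the proof of Proposition \ref{prop-homeo}) shows $\varphi(J(f))\subset J(g)$, since points of $\varphi(J(f))=\varphi(\partial D(\gamma_1)\cap J(f))$ — wait, more precisely points of $J(f)\subset D(\gamma_1)$ stay in $D(\gamma_1)$ under iteration of $G$ while nearby points of the basin escape, so no subsequence of $\{g^{\circ k}\}$ is normal near $\varphi(J(f))$. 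Next I would show the nesting. By Lemma \ref{lema:dendrite} $G:\EC\setminus\overline{D}(\xi_1)\to\EC\setminus\overline{D}(\alpha_1)$ is a degree-$m$ covering with $G(\gamma_2)=\xi_2$ and $L(\alpha_1)<L(\xi_1)<L(\gamma_2)$, and by the annulus-to-disk construction the annulus $A(\beta_3,\xi_1)$ maps by a degree-$m$ covering onto $A(\gamma_1,\alpha_1)$; combining these, the annulus $\varphi(A(\gamma_1,\alpha_1))$ — which is bounded inside by $\varphi(\gamma_1)\subset\partial(\text{Fatou component})$ and whose core is crossed by $J(f)$'s image under the annulus-to-annulus maps — contains both a Julia component $\widehat{J}_0$ of $g$ and a Fatou component $\widehat{W}_0$ of $g$, each separating $\varphi(\partial D(\gamma_1))$ from $\infty$. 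This is exactly the role of property (d) of Lemma \ref{lema:annu-disk-3}: the preimage of $J(f)$ inside $A(\alpha_3,\beta_3)$ is a connected compact set separating the two boundary circles, which after pulling back through the covering structure and applying $\varphi$ gives the desired buried copy.

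Then I would iterate. Since $G^{\circ ?}$ — more precisely, the first-return-type map on the nested annuli $A(\gamma_1,\partial D(\gamma_1))$, built from $G|_{D(\gamma_1)}=f|_{D(\gamma_1)}$ which near $\partial D(\gamma_1)$ is conjugate via $\phi$ to $z\mapsto z^d$ on a collar of the unit circle — is a degree-$d$ covering of that collar onto a larger collar, I obtain by induction a sequence of Julia components $\{\widehat J_k\}$ and Fatou components $\{\widehat W_k\}$ in $\varphi(A(\gamma_1,\partial D(\gamma_1)))$, each separating $\varphi(\partial D(\gamma_1))$ from $\infty$, with $g^{\circ p}$-type relations pulling $\widehat J_k$ back to $\widehat J_{k-1}$. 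Because the level of $\gamma_1$ in $B$ can be taken close to $1$ (i.e. $R$ large, equivalently the equipotential close to $J(f)$), the levels of the points of $\varphi^{-1}(\widehat J_k\cup\widehat W_k)$ tend to $1$, so $\widehat J_k\to\varphi(\partial D(\gamma_1))$ and $\widehat W_k\to\varphi(\partial D(\gamma_1))$ in the Hausdorff metric. Consequently $\varphi(\partial D(\gamma_1))$ is semi-buried from the Fatou component $\varphi(D(\gamma_1))$, and since $J(f)$ is a dendrite — hence $\partial D(\gamma_1)\supset J(f)$ is all of $J(f)$, and every Fatou component of $g$ is eventually mapped onto $\varphi(D(\gamma_1))$ or onto the immediate basin — pulling back under $g$ shows that $\partial V$ is semi-buried from $V$ for \emph{every} Fatou component $V$ of $g$. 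Finally, to see $\varphi(J(f))$ is a single Julia component (not a proper subset of one), suppose a point $z_0$ lay in the same Julia component but outside $\varphi(J(f))$; then $z_0$ is in some Fatou-complementary region, and after iteration lands in $\varphi(D(\gamma_1))$, contradicting the fact that $\varphi(D(\gamma_1))$ is separated from $\varphi(J(f))$ by the infinitely many nested Julia components $\widehat J_k$. This gives that $\varphi(J(f))$ is a buried Julia component; the quasiconformal conjugacy between $g|_{\varphi(J(f))}$ and $f|_{J(f)}$ is immediate from $G|_{D(\gamma_1)}=f|_{D(\gamma_1)}$ and $g=\varphi\circ G\circ\varphi^{-1}$, which carry $J(f)$ to $\varphi(J(f))$ equivariantly.

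I expect the main obstacle to be the bookkeeping in the combined surgery: one must check simultaneously that (a) the nested annuli $A(\gamma_1,\alpha_1)$, $A(\beta_1,\alpha_2)$, $A(\beta_2,\alpha_3)$, $A(\beta_3,\xi_1)$ and their images under $G$ fit together so that the first-return dynamics on the collar of $J(f)$ is the expected degree-$d$ power map, and (b) property (d) of Lemma \ref{lema:annu-disk-3} really does produce a buried \emph{connected} copy of $J(f)$ — i.e. that the critical values of $G|_{A(\alpha_3,\beta_3)}$ being \emph{inside} $J(f)$ forces $G^{-1}(J(f))\cap A(\alpha_3,\beta_3)$ to be connected and to separate the annulus, which is what makes the pulled-back copies genuinely Julia components rather than artifacts of the interpolation. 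Once these compatibility facts are in hand, the Hausdorff-convergence and semi-buriedness arguments are entirely parallel to those in \S\ref{sec-semi-buried} and \S\ref{sec-second-pert}.
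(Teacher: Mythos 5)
Your overall plan --- mimic Propositions \ref{prop-homeo} and \ref{prop-buried}, use Lemma \ref{lema:annu-disk-3}(d) to produce a separating Julia piece, and pull it back through the degree-$d$ dynamics of $f$ near $J(f)$ --- is indeed how the paper argues, but two of the steps you actually assert are false, and they are the steps that carry the proof. First, $D(\gamma_1)$ is not a Fatou component: it is the equipotential disk of level $R$ and contains the whole dendrite $J(f)$ in its interior, while its boundary $\gamma_1$ lies in the Fatou set of the straightened map (its orbit is $\gamma_1\mapsto\gamma_2\mapsto\xi_2\subset E$, hence converges to $\infty$). So ``the Fatou component $\varphi(D(\gamma_1))$'', the semi-buriedness of $\varphi(\partial D(\gamma_1))$, the convergence of $\widehat J_k$ to $\varphi(\partial D(\gamma_1))$, the inclusion ``$\partial D(\gamma_1)\supset J(f)$'', and ``every Fatou component of $g$ is eventually mapped onto $\varphi(D(\gamma_1))$'' are all false or meaningless as written (and $A(\gamma_1,\partial D(\gamma_1))$ is degenerate, since $\partial D(\gamma_1)=\gamma_1$). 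What the argument needs is the annulus $A(\gamma_1,J(f))$ between the equipotential and the dendrite: $f:A(\gamma_1,J(f))\to A(\gamma_2,J(f))$ is a degree-$d$ covering, its iterated preimages have outer level $R^{1/d^k}\to 1$, so the pulled-back Julia and Fatou components converge to $\varphi(J(f))$ itself, not to $\varphi(\gamma_1)$; your parenthetical ``level close to $1$, i.e.\ $R$ large'' is likewise backwards. Second, the separating Julia component cannot live in $\varphi(A(\gamma_1,\alpha_1))$: the interpolation sends $A(\gamma_1,\alpha_1)$ onto $A(\gamma_2,\xi_2)\subset E$, so this annulus lies entirely in the attracting basin of $\infty$ and supplies only the separating \emph{Fatou} component $U_0$. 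The separating \emph{Julia} component $J_0$ lives in $\varphi(A(\alpha_3,\beta_3))$, exactly by Lemma \ref{lema:annu-disk-3}(d): $G^{-1}(J(f))\cap A(\alpha_3,\beta_3)$ is a connected compact set separating $\alpha_3$ from $\beta_3$, and it lies in the Julia set because $J(f)$ does and is invariant. You cite this fact but assign it the wrong role (``gives the desired buried copy''); it gives the first separating curtain, not the copy.

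With these two corrections the proof closes as in the paper: $\varphi(J(f))\subset J(g)$ by the Montel argument (orbits on $\varphi(J(f))$ stay in a compact set avoiding $\infty$, while arbitrarily nearby points, e.g.\ on pullbacks of $\varphi(\gamma_1)$, converge to $\infty$); $J_0$ and $U_0$ separate $\infty$ from $\varphi(J(f))$; pulling back along $A(\gamma_1,J(f))$ gives nested sequences $J_k$, $U_k$ converging to $\varphi(J(f))$ in the Hausdorff metric, which shows simultaneously that $\varphi(J(f))$ is a full Julia component and that no Fatou component has boundary meeting it. Since the complement of the dendrite $J(f)$ is connected, being semi-buried from this single complementary component already means buried, so your final step about semi-buriedness from every Fatou component is unnecessary even after repair. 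The closing conjugacy statement ($g=\varphi\circ G\circ\varphi^{-1}$ with $G=f$ on $D(\gamma_1)\supset J(f)$) is fine.
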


\begin{proof}
The proof is similar to that of Propositions \ref{prop-homeo} and \ref{prop-buried}. Hence we only give a sketch of the proof here. Firstly $\varphi(J(f))$ is contained in the Julia set of $g$ by a similar argument as in the first paragraph of the proof of Proposition \ref{prop-homeo}. Then we show that there exists a sequence of Julia components $\{J_k\}_{k\in\N}$ of $g$ in $\varphi(D(\gamma_2))$ which converges to $\varphi(J(f))$ in the Hausdorff metric. By the annulus-to-annulus construction of $G$ above, it follows that the image of the annuli $A(\gamma_1,\alpha_1)$, $A(\beta_1,\alpha_2)$, $A(\beta_2,\alpha_3)$ and $A(\beta_3,\xi_1)$ under $\varphi$ are contained in the Fatou set of $g$. According to Lemma \ref{lema:annu-disk-3}(d), the annulus $\varphi(A(\alpha_3,\beta_3))\subset \varphi(A(\gamma_1,\gamma_2))$ contains a Julia component $J_0$ of $g$ separating $\infty$ from $\varphi(J(g))$. Then $\varphi(A(\gamma_1,\alpha_1))$ is contained in a Fatou component $U_0$ of $g$ which separates $\infty$ and $\varphi(J(g))$.

Since $f:A(\gamma_1,J(f))\to A(\gamma_2,J(f))$ is a covering map with degree $d$, it follows that the annulus $\varphi(A(\gamma_1,J(f)))$ contains a Julia component $J_1:=G^{-1}(J_0)\cap \varphi(A(\gamma_1,J(f)))$ and a Fatou component $U_1:=G^{-1}(U_0)\cap \varphi(A(\gamma_1,J(f)))$ such that both $J_1$ and $U_1$ separate $\infty$ from $\varphi(J(f))$. Inductively, one can obtain a sequence of Julia components $\{J_k\}_{k\geq 1}$ and a sequence of Fatou components $\{U_k\}_{k\geq 1}$ in $\varphi(A(\gamma_1,J(f)))$ converging to $\varphi(J(f))$ such that $G(J_k)=J_{k-1}$, $G(U_k)=U_{k-1}$ and each $J_k$ and $U_k$ separates $\infty$ from $\varphi(J(f))$. This implies that $\varphi(J(f))$ is a buried Julia component of $g$.
\end{proof}

\begin{proof}[{Proof of Theorem \ref{thm-main} in the case $n=1$ (the dendrite case)}]
The first assertion follows from Proposition \ref{prop-buried-3}. For the statements on the degrees, by Lemmas \ref{lema:dendrite}, \ref{lema:annu-disk-3-key} and \ref{lema:annu-disk-3}, we can choose $m=5$ and then $\deg(g)=d+10\leq 4d+4$ for all $d\geq 2$.
\end{proof}

\begin{proof}[{Proof of Theorem \ref{thm-main}}]
The sufficiency of the first statement has been proven respectively in Propositions \ref{prop-buried}, \ref{prop-buried-p-1} and \ref{prop-buried-3} by dividing the arguments into three cases. Next to the proof of these propositions the results on the degrees have been proven. The necessity of the first statement has been proven at the end of \S\ref{sec-second-pert}.
\end{proof}

\section{The quartic Julia sets containing buried Jordan curves}\label{sec-quartic}

In this section, we aim to construct some quartic rational maps whose Julia sets contain buried components which are Jordan curves. Except Godillon's example of cubic rational map \cite{God15}, the known rational maps whose buried Julia components are Jordan curves have at least degree five. Note that Godillon's example is degree three but his construction requires constructing weighted dynamical trees and solving Hurwitz's problem, which is very complicated. We present here a much simpler construction, which is based on the observation of Lemma \ref{lema:cover-disk-to-disk}.

Recall that in \S\ref{subsec-equi}, $B_{i,1}\mapsto\cdots\mapsto B_{i,p_i}$ is a cycle of super-attracting basins of the given rational map $f$, where $i\geq 1$ is an index. In order to construct a semi-buried Julia component, the degree of the holomorphic branched covering $F$ from a disk to another (see Lemma \ref{lema:cover-disk-to-disk}) is required to satisfy $m_i>d_{i,1}/(d_i-1)$, where $d_{i,1}=\deg(f:B_{i,1}\to B_{i,2})\geq 1$ and $d_i=\deg(f^{\circ p_i}:B_{i,1}\to B_{i,1})\geq 2$. Here the construction of $F$ can be seen as a small perturbation.

As remarked before, if $d_{i,1}=1$ and $d_i=2$, then $m_i$ can be chosen as $2$. If $d_{i,1}=1$ and $d_i\geq 3$, then $m_i$ can be chosen as $1$.
Therefore, it is natural to consider the following two families:
\begin{equation*}
f_\lambda(z)=z^2+c+\frac{\lambda}{(z-c)^2} \quad\text{and}\quad g_\mu(z)=z^q+b+\frac{\mu}{z-b},
\end{equation*}
where $\lambda\neq 0$ (resp. $\mu\neq 0$) and $c\neq 0$ (resp. $b\neq 0$) is the center of a hyperbolic component of the Mandelbrot set (resp. the Multibrot set $M_q$ with $q\geq 3$). This means that the perturbation is made at the \textit{critical value} $c$ of $z\mapsto z^2+c$ (resp. $b$ of $z\mapsto z^q+b$), but not at the \textit{critical point} $0$.

The main content in this section is to prove that if $\lambda\neq 0$ (resp. $\mu\neq 0$) is small enough, then the Julia set of $f_\lambda$ (resp. $g_\mu$) contains some buried Jordan curves and a semi-buried Julia component which is homeomorphic to the Julia set of $z\mapsto z^2+c$ (resp. $z\mapsto z^q+b$). We only give the proof of Theorem \ref{thm-exam} for $f_\lambda$ since the argument for $g_\mu$ is completely similar.

\subsection{The position of the critical orbits}

In the remaining of this section, we always assume that $\lambda\neq 0$ is sufficiently small. We use $B_\lambda$ and $T_\lambda$, respectively, to denote the immediate attracting basin of $f_\lambda$ containing $\infty$ and the Fatou component of $f_\lambda$ containing the pole $c$. Let $p\geq 2$ be the minimal period of the critical point of $P_c(z)= z^2+c$, i.e. $P_c^{\circ p}(0)=0$ and $P_c^{\circ p}(c)=c$.

Let $X(\lambda)> 0$ and $Y(\lambda)>0$ be two numbers depending on $\lambda$. We denote $X(\lambda)\preceq Y(\lambda)$ if there exists a constant $C_0> 0$ independent of $\lambda$ (where $\lambda\neq 0$ is small enough) such that $X(\lambda)\leq C_0Y(\lambda)$. Moreover, we use $X(\lambda)\asymp Y(\lambda)$ to denote the two numbers which are comparable, i.e. there exist two constants $C_1$, $C_2>0$ independent of $\lambda$ (where $\lambda\neq 0$ is small enough) such that $C_1Y(\lambda)\leq X(\lambda)\leq C_2 Y(\lambda)$.

\begin{lema}[See Figure \ref{Fig_diagram-1}]\label{lema:hyper}
There exists $\delta>0$ such that if $0<|\lambda|<\delta$, then there exist two Fatou components $A_\lambda$ and $U_\lambda$ such that
\begin{enumerate}
\item $A_\lambda$ contains three critical points $\{c_j^\lambda:1\leq j\leq 3\}$ of $f_\lambda$ such that $c_j^\lambda\in\{z:|\lambda|^{11/30}\leq |z-c|\leq |\lambda|^{3/10}\}\subset A_\lambda$; In particular, $\lim_{\lambda\to 0}c_j^\lambda=c$;
\item $U_\lambda$ contains one critical point $c_0^\lambda$ of $f_\lambda$ such that $c_0^\lambda\in\overline{\D}(0,|\lambda|^{2/3})\subset U_\lambda$; In particular, $\lim_{\lambda\to 0}c_0^\lambda=0$; and
\item $f_{\lambda}^{\circ p}(A_\lambda)=T_\lambda$ and $f_{\lambda}(U_\lambda)=T_\lambda$.
\end{enumerate}
In particular, $f_\lambda$ is hyperbolic\footnote{In Corollary \ref{cor:different} we will prove that the Fatou components $B_\lambda$, $A_\lambda$, $U_\lambda$ and $T_\lambda$ are pairwise different. Hence $A_\lambda$ contains exactly three critical points and $U_\lambda$ contains exactly one.}.
\end{lema}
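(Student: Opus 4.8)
The plan is to track the critical points and critical orbits of $f_\lambda$ under the singular perturbation, showing that as $\lambda\to 0$ the map $f_\lambda$ converges (away from $c$) to the unperturbed polynomial $P_c(z)=z^2+c$, while near $z=c$ a small disk is created on which the local dynamics is a covering of degree two onto the basin piece. The starting observation is that $f_\lambda(z)=z^2+c+\lambda/(z-c)^2$ has four critical points: the two coming from $z^2$ get split/perturbed and there is a new cluster of critical points near $z=c$ arising from the balance $z^2+c \approx c$ vs.\ $\lambda/(z-c)^2\to\infty$. Solving $f_\lambda'(z)=2z-2\lambda/(z-c)^3=0$, i.e.\ $z(z-c)^3=\lambda$, one sees that near $z=c$ there are three solutions with $z-c\asymp (\lambda/c)^{1/3}$, hence $|z-c|\asymp|\lambda|^{1/3}$, which comfortably lies in the annulus $\{|\lambda|^{11/30}\le|z-c|\le|\lambda|^{3/10}\}$ for small $\lambda$ since $1/3\in(3/10,11/30)$. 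The remaining critical point is near $z=0$: it solves $z(z-c)^3=\lambda$ with $z\asymp \lambda/(-c)^3$, so $|z|\asymp|\lambda|$, which lies in $\overline{\D}(0,|\lambda|^{2/3})$.

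\textbf{Key steps.} First I would make the location of the critical points precise by an application of Rouch\'e's theorem (or the implicit function theorem) to $z(z-c)^3=\lambda$ on suitable small circles, obtaining exactly three roots $c_1^\lambda,c_2^\lambda,c_3^\lambda$ with $|c_j^\lambda-c|\asymp|\lambda|^{1/3}$ and one root $c_0^\lambda$ with $|c_0^\lambda|\asymp|\lambda|$; this immediately gives $c_j^\lambda\to c$ and $c_0^\lambda\to 0$. Second, I would analyze the image of a small round disk $\D(c,\rho_\lambda)$ with, say, $\rho_\lambda=|\lambda|^{3/10}$: on its boundary $z^2+c$ is within $O(\rho_\lambda)$ of $c$ while $\lambda/(z-c)^2$ has modulus $|\lambda|/\rho_\lambda^2=|\lambda|^{2/5}$, so $f_\lambda$ maps $\D(c,\rho_\lambda)$ onto a domain that is a small perturbation of $\D(c,|\lambda|^{2/5})$ by a degree-two branched cover (the pole of order $2$ at $z=c$ dominates). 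Thus there is a Fatou component $A_\lambda$ containing $\D(c,\rho_\lambda)$ and all three critical points $c_j^\lambda$, with $f_\lambda|_{A_\lambda}$ of degree two onto a component mapped by $f_\lambda^{\circ(p-1)}$ close to $P_c^{\circ(p-1)}$; since $P_c^{\circ p}(c)=c$ one gets $f_\lambda^{\circ p}(A_\lambda)=T_\lambda$ by continuity and the fact that $T_\lambda$ is the component of $c$. Third, for $U_\lambda$: the preimage $f_\lambda^{-1}(T_\lambda)$ near $z=0$ — note $f_\lambda(0)=c+\lambda/c^2\to c$ — is a small topological disk $U_\lambda$ on which $f_\lambda$ is degree one (a biholomorphism onto $T_\lambda$), containing the single critical point $c_0^\lambda$ because $f_\lambda'(0)=-2\lambda/c^3\ne 0$ but the critical point sits at distance $\asymp|\lambda|$; a Rouch\'e argument confines $c_0^\lambda$ to $\overline\D(0,|\lambda|^{2/3})$. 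Finally, to conclude hyperbolicity I would check that every critical point of $f_\lambda$ lies in the attracting basin: the two perturbed critical points near $0$ and near $c$ land (after one or $p$ iterates) in $T_\lambda$, which is eventually mapped into $B_\lambda$ or into the super-attracting/attracting cycle carrying $c$; and the critical points near the original critical orbit of $P_c$ follow that orbit by structural stability, so all are attracted — hence $f_\lambda$ is hyperbolic.

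\textbf{Main obstacle.} The delicate point is \emph{Step 2}: controlling the image $f_\lambda(\D(c,\rho_\lambda))$ precisely enough — including its degree and the fact that it stays inside the right Fatou component — requires choosing $\rho_\lambda$ in the correct range so that the pole term dominates the regular term on the boundary circle but the image is still a tiny neighborhood of $c$, and then propagating this through $p-1$ further iterations where the orbit is a small perturbation of the periodic critical orbit of $P_c$. Since $P_c$ is hyperbolic (postcritically finite), the orbit $0\mapsto c\mapsto P_c(c)\mapsto\cdots$ is stable under perturbation, so for $|\lambda|$ small the orbit of the component $A_\lambda$ shadows it and returns to $T_\lambda$; the quantitative estimates (the various exponents $3/10$, $11/30$, $2/3$) are then a matter of choosing the scales consistently. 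The second mild obstacle is verifying that $A_\lambda$ and $U_\lambda$ are genuinely \emph{distinct} Fatou components and distinct from $B_\lambda$, which follows because their forward orbits all pass through $T_\lambda$ while $B_\lambda$ is the immediate basin of $\infty$ and $T_\lambda\ne B_\lambda$ for $c\ne 0$; one also records that $\lim_{\lambda\to 0}f_\lambda = P_c$ locally uniformly on $\EC\setminus\{c\}$, which pins down the combinatorics.
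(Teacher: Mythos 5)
Your overall skeleton (locate the four finite critical points of $f_\lambda$ by Rouch\'e from $z(z-c)^3=\lambda$, then follow the critical orbit of $P_c$ for $p$ steps and compare scales) is the same as the paper's, but your Step 2 is wrong as written. For $z$ with $|z-c|=|\lambda|^{3/10}$ the regular part satisfies $z^2+c=P_c(c)+O(|\lambda|^{3/10})$ with $P_c(c)=c^2+c\neq c$, so the relevant image region sits near $P_c(c)$, not near $c$; and since $\D(c,|\lambda|^{3/10})$ contains the pole $z=c$, its image under $f_\lambda$ is a neighborhood of $\infty$ (roughly the \emph{complement} of a small disk about $P_c(c)$), not "a small perturbation of $\D(c,|\lambda|^{2/5})$". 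Consequently no single Fatou component contains the whole disk $\D(c,|\lambda|^{3/10})$: the sub-disk $\D(c,a_1|\lambda|^{1/2})$ is mapped by the pole term into $\{|z|>R\}\subset B_\lambda$ and hence lies in $T_\lambda$, while the annulus $\{|\lambda|^{11/30}\le|z-c|\le|\lambda|^{3/10}\}$ lies in a \emph{different} component $A_\lambda$ (a component containing the pole is mapped onto $B_\lambda$ in one step, which is incompatible with $f_\lambda^{\circ p}(A_\lambda)=T_\lambda$), and there is Julia set in between — that separation is the whole point of the construction. The paper's proof therefore discards the pole disk and estimates only on the annulus: $|f_\lambda(z)-P_c(c)|\preceq|\lambda|^{4/15}$ there, then $|f_\lambda^{\circ k}(z)-P_c^{\circ k}(c)|\preceq|\lambda|^{4/15}$ for $1\le k\le p-1$ and $|f_\lambda^{\circ p}(z)-c|\preceq|\lambda|^{8/15}$. (Your degree statements are also off, though not needed for the lemma: $f_\lambda$ on $A_\lambda$ has degree $3$, and $f_\lambda:U_\lambda\to T_\lambda$ cannot be a biholomorphism while $U_\lambda$ contains the critical point $c_0^\lambda$; it has degree $2$.)

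The second genuine gap is the final landing "by continuity"/"structural stability". The component $T_\lambda$ shrinks to the point $c$ as $\lambda\to0$ (its size is of order $|\lambda|^{1/2}$), so knowing that the $p$-th iterate of the annulus, or the image of $\overline{\D}(0,|\lambda|^{2/3})$, is merely "close to $c$" proves nothing; one needs the quantitative lower bound $\D(c,a_1|\lambda|^{1/2})\subset T_\lambda$ (coming from the pole sending this disk into a fixed neighborhood of $\infty$) together with the scale comparisons $|\lambda|^{8/15}\ll|\lambda|^{1/2}$ and $|\lambda|\ll|\lambda|^{1/2}$. These comparisons are exactly where the exponents $3/10$, $11/30$, $2/3$ come from and cannot be deferred as "choosing the scales consistently". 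Finally, structural stability cannot be invoked across the singular perturbation (the degree jumps from $2$ to $4$ and $J(f_\lambda)$ becomes disconnected); fortunately it is not needed: besides $c\mapsto\infty$ and the fixed point $\infty$, the only critical points are the four roots of $z(z-c)^3=\lambda$ already located, so there is no extra family of critical points "near the original critical orbit of $P_c$" to control.
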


\begin{figure}[!htpb]
  \setlength{\unitlength}{1mm}
  \centering
  \includegraphics[width=0.95\textwidth]{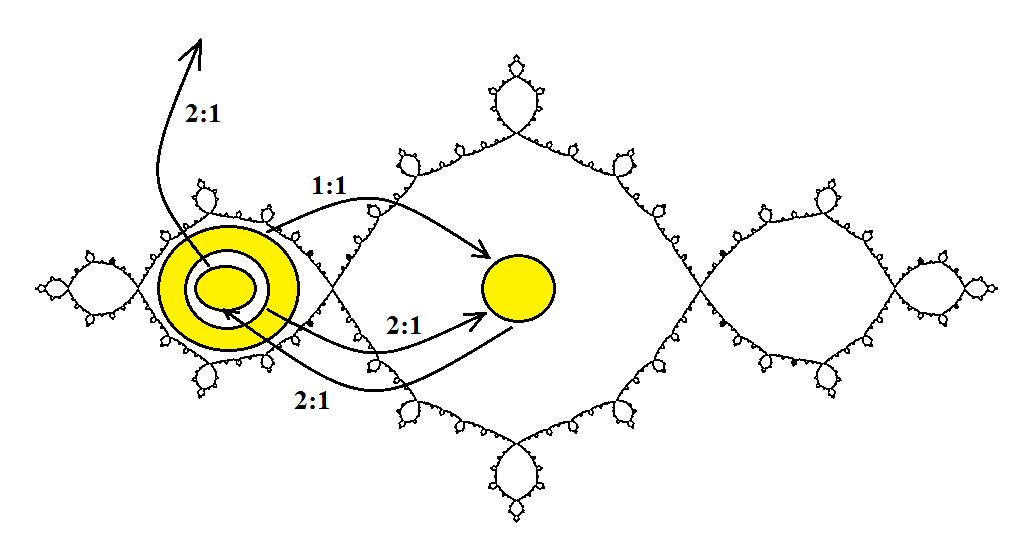}
  \put(-90,59){$B_\lambda$}
  \put(-61.5,30){$U_\lambda$}
  \put(-95.5,30){$T_\lambda$}
  \put(-88.4,30){$A_\lambda$}
  \caption{The sketch of the mapping diagram of $f_\lambda$, where $\lambda\neq 0$ is small, $c=-1$ and $p=2$. Some arrows with mapping degrees between the boundaries of the Fatou components are also marked.}
  \label{Fig_diagram-1}
\end{figure}

\begin{proof}
One can check that $f_\lambda$ has two simple critical points $\infty$ and $c$. A direct calculation shows that
\begin{equation*}
f_\lambda'(z)=2\,\frac{z(z-c)^3-\lambda}{(z-c)^3}.
\end{equation*}
Except $\infty$ and $c$, to obtain the remaining $4$ critical points $c_j^\lambda$ with $0\leq j\leq 3$, it is sufficient to solve the following equation (Now the parameter $c\neq 0$ is seen as a constant):
\begin{equation}\label{equ:deriv-1}
F(z,\lambda):=z(z-c)^3-\lambda=0.
\end{equation}
From the above equation we know that if $\lambda\neq 0$ is small, then one critical point of $f_\lambda$ is close to $0$ (without loss of generality we mark it by $c_0^\lambda$) and the remaining three critical points $c_1^\lambda$, $c_2^\lambda$, $c_3^\lambda$ are close to $c$. By \eqref{equ:deriv-1} and implicit function theorem, if $\lambda\neq 0$ is small enough, then $c_0^\lambda$ and $c_j^\lambda$ with $1\leq j\leq 3$ can be written as
\begin{equation*}
c_0^\lambda=a_0\lambda+o(|\lambda|) \quad\text{ and }\quad c_j^\lambda=c+a_je^{2\pi\ii j/3}\lambda^{1/3}+o(|\lambda|^{1/3}).
\end{equation*}
Substituting the above series to \eqref{equ:deriv-1} and comparing the coefficients, one can obtain:
\begin{equation*}
a_0=-c^{-3} \quad\text{ and }\quad a_j=c^{-1/3} \text{\quad for }1\leq j\leq 3.
\end{equation*}
In particular, we have $\lim_{\lambda\to 0}c_0^\lambda=0$ and $\lim_{\lambda\to 0}c_j^\lambda=c$ for $1\leq j\leq 3$. Since $\infty$ is a super-attracting fixed point of $f_\lambda$, there exists a constant $R>1$ such that $\EC\setminus\D(0,R)$ is contained in $B_\lambda$. This implies that there exists a constant $r_1>0$ independent of small $\lambda$ such that $\D(c,r_1|\lambda|^{1/2})$ is contained in $T_\lambda$.

If $z\in\overline{\D}(0,|\lambda|^{2/3})$, then $|z|^2$ is much smaller than $|\lambda|/|z-c|^2$ if $\lambda\neq 0$ is sufficiently small. Hence we have
\begin{equation*}
|f_\lambda(z)-c|=\left|z^2+\frac{\lambda}{(z-c)^2}\right|\asymp|\lambda|.
\end{equation*} Since $|\lambda|\ll|\lambda|^{1/2}$ and $c_0^\lambda\in\D(0,|\lambda|^{2/3})$ if $\lambda$ is small enough, it implies that $f_\lambda(c_0^\lambda)\in T_\lambda$ and $c_0^\lambda$ escapes to $\infty$ under the iterate of $f_\lambda$ for small $\lambda$. For $|\lambda|^{11/30}\leq |z-c|\leq |\lambda|^{3/10}$, we have
\begin{equation*}
\begin{split}
|f_\lambda(z)-P_c(c)|
=&\left|(z+c)(z-c)+\frac{\lambda}{(z-c)^2}\right|\\
\preceq&\,|z-c|+|\lambda|/|z-c|^2\\
\preceq&\,|\lambda|^{3/10}+|\lambda|^{4/15}\preceq |\lambda|^{4/15}.
\end{split}
\end{equation*}
Inductively, for $|\lambda|^{11/30}\leq |z-c|\leq |\lambda|^{3/10}$ and $2\leq k\leq p-1$ (if $p\geq 3$), we have
\begin{equation*}
\begin{split}
&~|f_\lambda^{\circ k}(z)-P_c^{\circ k}(c)| \\
=&\,\left|\Big(f_\lambda^{\circ (k-1)}(z)+P_c^{\circ (k-1)}(c)\Big)\Big(f_\lambda^{\circ (k-1)}(z)-P_c^{\circ (k-1)}(c)\Big)
+\frac{\lambda}{\big(f_\lambda^{\circ (k-1)}(z)-c\big)^2}\right| \\
\preceq&\,\left|f_\lambda^{\circ (k-1)}(z)-P_c^{\circ (k-1)}(c)\right|+|\lambda|\preceq|\lambda|^{4/15}.
\end{split}
\end{equation*}
Since $P_c^{\circ (p-1)}(c)=0$, we have $|f_\lambda^{\circ (p-1)}(z)|=|f_\lambda^{\circ (p-1)}(z)-P_c^{\circ (p-1)}(c)|\preceq|\lambda|^{4/15}$. This implies that
\begin{equation*}
\begin{split}
|f_\lambda^{\circ p}(z)-c|
=&\,\left|\Big(f_\lambda^{\circ (p-1)}(z)\Big)^2+\frac{\lambda}{\big(f_\lambda^{\circ (p-1)}(z)-c\big)^2}\right| \\
\preceq&~|\lambda|^{8/15}+|\lambda|
\preceq|\lambda|^{8/15}.
\end{split}
\end{equation*}

Since $|\lambda|^{8/15}\ll|\lambda|^{1/2}$ and $|\lambda|^{11/30}<|c_j^\lambda-c|< |\lambda|^{3/10}$ for all $1\leq j\leq 3$ if $\lambda$ is small enough, it implies that $f_\lambda^{\circ p}(c_j^\lambda)\in T_\lambda$ provided $\lambda$ is sufficiently small. Therefore, all the critical points of $f_\lambda$ escape to $\infty$ and hence $f_{\lambda}$ is hyperbolic.
\end{proof}

\begin{rmk}
A small perturbation of a hyperbolic rational map is not necessarily still hyperbolic. For example, let $F(z)=z^2$ and $F_\lambda(z)=z^2+\lambda/z^2$, where $\lambda\in\C$. There exists a sequence $\lambda_n\to 0$ as $n\to\infty$ such that $F_\lambda$ is not hyperbolic. See \cite{DG08}.
\end{rmk}

\subsection{The Julia set moves holomorphically}

According to \cite[Corollary 4.12]{Mil06}, $J(P_c)$ is the boundary of the basin of infinity. If $\lambda\neq 0$ is sufficiently small, it is reasonable to expect that the dynamics near $J(P_c)$ would not be changed too much. Indeed, we will use holomorphic motion to show that $J(P_c)$ moves holomorphically if $\lambda$ varies in a small neighborhood of the origin.

\begin{defi}[{Holomorphic motion, \cite{MSS83}, \cite{Lyu83}}]
Let $E$ be a subset of $\EC$, a map $h:\mathbb{D}\times E\rightarrow\EC$ is called a \textit{holomorphic motion} of $E$ parameterized by the unit disk $\mathbb{D}$ and with base point $0$ if
\begin{enumerate}
\item For every $z\in E$, $\lambda\mapsto h(\lambda,z)$ is holomorphic for $\lambda$ in $\mathbb{D}$;
\item For every $\lambda\in\mathbb{D}$, $z\mapsto h(\lambda,z)$ is injective on $E$; and
\item $h(0,z)=z$ for all $z\in E$.
\end{enumerate}
\end{defi}

The unit disk $\mathbb{D}$ in the above definition can be replaced by any connected complex manifold. See \cite[\S 4.1]{McM94b}.

\begin{lema}[{$\lambda$-lemma, \cite{MSS83}}]\label{lema-lambda}
A holomorphic motion $h:\mathbb{D}\times E\rightarrow\EC$ of $E$ has a unique extension to a holomorphic motion of $\overline{E}$. The extension is a continuous map. Moreover, for each $\lambda\in \mathbb{D}$, the map $h(\lambda,\cdot):E\rightarrow\EC$ extends to a quasiconformal map of $\EC$ to itself.
\end{lema}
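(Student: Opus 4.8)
The plan is to derive all three assertions from a single estimate --- a modulus of continuity for $z\mapsto h(\lambda,z)$ that is uniform in $\lambda$ on compact subsets of $\D$ --- which itself comes from the Schwarz--Pick inequality for holomorphic maps into the thrice punctured sphere $\EC\setminus\{0,1,\infty\}$, whose hyperbolic metric we denote $\rho$. First I would normalize: if $E$ has at most two points all three claims are trivial, so assume $E$ has at least three points; after replacing $h(\lambda,z)$ by $M_\lambda(h(\lambda,z))$ for a holomorphically varying family of Möbius transformations with $M_0=\id$ and relabelling, one may assume $\{0,1,\infty\}\subset E$ and $h(\lambda,0)\equiv 0$, $h(\lambda,1)\equiv 1$, $h(\lambda,\infty)\equiv\infty$. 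Under this normalization, for each $z\in E\setminus\{0,1,\infty\}$ the function $\lambda\mapsto h(\lambda,z)$ omits $0,1,\infty$, and for any four distinct points $x,y,u,w$ of $E$ the cross-ratio $\lambda\mapsto[\,h(\lambda,x),h(\lambda,y),h(\lambda,u),h(\lambda,w)\,]$ likewise omits $0,1,\infty$ (a cross-ratio takes a value in $\{0,1,\infty\}$ only when two of its four arguments coincide). Lifting to universal covers, every such holomorphic map $\D\to\EC\setminus\{0,1,\infty\}$ is distance-nonincreasing from $d_\D$ to $\rho$; in particular all these families are normal.

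The next and main step is the uniform modulus of continuity. Fix a compact $K\subset\D$ with $0\in K$. If $z\in E$ has spherical distance at least $\delta>0$ from $\{0,1,\infty\}$, then the curve $\{h(\lambda,z):\lambda\in K\}$ has $\rho$-diameter at most $\mathrm{diam}_\D K$ and contains $h(0,z)=z$, hence lies in a fixed compact set $L=L(\delta)\subset\EC\setminus\{0,1,\infty\}$ on which $\rho$ and the spherical metric are comparable. Given another $w\in E$ close to such a $z$, the cross-ratio $u(\lambda):=[\,h(\lambda,z),h(\lambda,w),0,1\,]$ is a $\rho$-contraction whose value at $\lambda=0$ is $[\,z,w,0,1\,]$, close to $1$; so $u(\lambda)$ stays close to $1$ for all $\lambda\in K$, and solving the cross-ratio relation for $h(\lambda,w)$ over the compact set $L$ shows that $h(\lambda,w)$ is close to $h(\lambda,z)$, uniformly in $\lambda\in K$ and in $z$ with spherical distance $\geq\delta$ from $\{0,1,\infty\}$. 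For $z$ tending to a puncture, say to $0$, I would instead use $\rho(h(\lambda,z),z)\leq\mathrm{diam}_\D K$ together with the standard fact that hyperbolic balls of bounded radius centred near a puncture of $\EC\setminus\{0,1,\infty\}$ have spherical diameter tending to $0$ as the centre approaches the puncture; this gives $h(\lambda,z)\to 0$ uniformly in $\lambda\in K$ as $z\to 0$, and similarly at $1$ and $\infty$. Combining the two regimes yields a modulus of continuity for $z\mapsto h(\lambda,z)$ on $E$ that is independent of $\lambda\in K$.

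From this the extension and its properties are routine. For $\bar z\in\overline E$ and any sequence $z_n\to\bar z$ in $E$, the uniform modulus of continuity forces $h(\cdot,z_n)$ to converge uniformly on compact subsets of $\D$ to a limit depending only on $\bar z$, which we take as $h(\cdot,\bar z)$; it is holomorphic in $\lambda$ (locally uniform limit of holomorphic maps), equals $\bar z$ at $\lambda=0$, and the resulting $h:\D\times\overline E\to\EC$ is continuous, uniqueness being forced by continuity. For injectivity of a slice $h(\lambda_0,\cdot)$ on $\overline E$: if $\bar z\neq\bar w$ with neither in $\{0,1,\infty\}$, apply Hurwitz's theorem to the contractions $\lambda\mapsto h(\lambda,z_n)$ and to the cross-ratios $\lambda\mapsto[\,h(\lambda,z_n),h(\lambda,w_n),0,1\,]$; their values at $\lambda=0$ converge to points of $\EC\setminus\{0,1,\infty\}$ (here $\bar z\neq\bar w$ is used for the cross-ratio), so the limits $h(\cdot,\bar z)$, $h(\cdot,\bar w)$ and $[\,h(\cdot,\bar z),h(\cdot,\bar w),0,1\,]$ again omit $0,1,\infty$; in particular the last is never $1$, i.e. $h(\lambda_0,\bar z)\neq h(\lambda_0,\bar w)$. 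The cases where $\bar z$ or $\bar w$ equals one of $0,1,\infty$ are immediate because these points are fixed by the motion. Hence $h$ extends to a holomorphic motion of $\overline E$.

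Finally, for the quasiconformal extension of a slice: for fixed $\lambda_0\in\D$ pick $r$ with $|\lambda_0|<r<1$; the cross-ratio estimates applied over the parameter disk $\D(0,r)$ show that $h(\lambda,\cdot)$, $|\lambda|<r$, is a quasisymmetric embedding of $E$ (hence of $\overline E$) into $\EC$ with a distortion bound depending only on $r$, and by the extension theorem for quasisymmetric embeddings of subsets of the sphere each such slice is the restriction of a quasiconformal self-homeomorphism of $\EC$; alternatively one may invoke S{\l}odkowski's theorem that the holomorphic motion of $E$ extends to a holomorphic motion of all of $\EC$, whose $\lambda$-slices are automatically quasiconformal with dilatation at most $\tfrac{1+|\lambda|}{1-|\lambda|}$. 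The genuinely delicate point in this scheme is the uniform modulus of continuity: controlling $h(\lambda,z)$ simultaneously for all $z\in E$ and all $\lambda$ in a compact subset of $\D$ forces one to compare $\rho$ with the spherical metric both away from the punctures (where they are comparable) and, crucially, near them (where a hyperbolic ball of bounded radius shrinks in the spherical metric as its centre approaches a puncture). Everything else --- normality, Hurwitz, holomorphic dependence, and the passage from quasisymmetry to a global quasiconformal map --- is standard.
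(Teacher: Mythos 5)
The paper does not prove this lemma at all: it is quoted verbatim (essentially as in McMullen's book, \S 4.1) with a citation to [MSS83], so there is no in-paper argument to compare yours against; what you wrote is in effect the classical Ma\~n\'e--Sad--Sullivan proof. For the first two assertions your sketch is correct and standard: after the M\"obius normalization fixing $0,1,\infty$, the slices $\lambda\mapsto h(\lambda,z)$ and the cross-ratios are maps into the thrice-punctured sphere, hence hyperbolic contractions; the two regimes (points of $E$ at definite spherical distance from the punctures, handled by solving the cross-ratio relation over a compact part of $\EC\setminus\{0,1,\infty\}$, and points near a puncture, handled by the shrinking of bounded hyperbolic balls near a cusp) do combine to give equicontinuity of $\{h(\lambda,\cdot)\}$ uniformly for $\lambda$ in a compact subset of $\D$, and the extension, its continuity, uniqueness, and the injectivity of the extended slices via Hurwitz applied to the slice maps and to the cross-ratios all go through as you describe.

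The genuine gap is in your preferred route to the third assertion. There is no ``extension theorem for quasisymmetric embeddings of subsets of the sphere'' in the generality you need: a quasisymmetric (quasi-M\"obius) embedding of an \emph{arbitrary} closed set $\overline{E}\subset\EC$ into $\EC$ is not known (and is not true in general) to be the restriction of a quasiconformal self-homeomorphism of $\EC$; extension theorems of Beurling--Ahlfors/Tukia--V\"ais\"al\"a type apply to special sets (lines, circles, open sets), not to arbitrary compacta such as Cantor sets. Indeed, upgrading the cross-ratio (quasi-M\"obius) estimates on $E$ to a global quasiconformal extension of each slice is exactly the delicate point that historically required the extended $\lambda$-lemmas (Sullivan--Thurston, Bers--Royden, and finally S{\l}odkowski). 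Your fallback is the correct fix: quote S{\l}odkowski's theorem to extend the motion to all of $\EC$, and then use the standard fact (itself a consequence of your cross-ratio estimates together with the observation that an injective continuous slice of a motion of the whole sphere is a homeomorphism) that each slice of a motion of $\EC$ is quasiconformal with dilatation at most $\tfrac{1+|\lambda|}{1-|\lambda|}$. Alternatively, one can avoid S{\l}odkowski by chaining Bers--Royden extensions along a finite sequence of hyperbolic disks from $0$ to $\lambda_0$ and composing the resulting quasiconformal maps. As written, though, the sentence asserting a general quasisymmetric extension theorem should be deleted or replaced by one of these arguments.
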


Let $B_0$ be the basin of infinity of $f_0:=P_c$ and recall that $B_\lambda$ is the immediate attracting basin of $\infty$ of $f_\lambda$. Let $\delta>0$ be the constant introduced in Lemma \ref{lema:hyper} and define $\Lambda:=\{\lambda\in\C:|\lambda|<\delta\}$.

\begin{lema}\label{holo-motion}
There is a holomorphic motion $h:\Lambda\times J(f_0)\rightarrow\mathbb{C}$ parameterized by $\Lambda$ and with base point $0$ such that
$h(\lambda, \partial B_0)=\partial B_\lambda$ for all $\lambda\in \Lambda$.
\end{lema}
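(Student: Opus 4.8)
The plan is to build the motion out of the repelling cycles of $P_c$ (using the $\lambda$-Lemma, Lemma \ref{lema-lambda}) and then to identify its image with $\partial B_\lambda$ by comparing the B\"{o}ttcher coordinates at $\infty$. The first observation to record is that, near $J(P_c)$, the family $f_\lambda$ is an honest holomorphic perturbation of $P_c$: since $c$ is the center of a hyperbolic component of the Mandelbrot set, the critical value $c$ lies on the super-attracting cycle of $P_c$, hence in $F(P_c)$, so $\operatorname{dist}(c,J(P_c))>0$; therefore $\lambda/(z-c)^2$ is holomorphic in $(\lambda,z)$ and $O(|\lambda|)$ uniformly on a fixed neighborhood $W$ of $J(P_c)$, and $f_\lambda\to P_c$ uniformly on $W$ as $\lambda\to 0$. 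As $P_c$ is post-critically finite it is expanding on $J(P_c)$, and by Lemma \ref{lema:hyper} every $f_\lambda$ with $\lambda\in\Lambda$ is hyperbolic; thus $f_\lambda$ is uniformly expanding on a neighborhood of the relevant Julia piece for all $\lambda\in\Lambda$.

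Next I would construct the holomorphic motion. Every repelling cycle of $P_c$ is nondegenerate, so each repelling periodic point $z_0$ of $P_c$ continues holomorphically to a periodic point $z_0(\lambda)$ of $f_\lambda$; hyperbolicity of $f_\lambda$ on $\Lambda$ (no multiplier-$1$ cycles) makes this continuation holomorphic over all of $\Lambda$, with $z_0(\lambda)$ staying repelling, hence in $J(f_\lambda)$. Uniform expansion forces continuations of distinct periodic points never to collide over $\Lambda$ (a collision would make two distinct finite itineraries share a periodic point of an expanding map, which its coding forbids). Hence $\lambda\mapsto z_0(\lambda)$ defines a holomorphic motion, over $\Lambda$ and with base point $0$, of the dense set of repelling periodic points of $P_c$; by the $\lambda$-Lemma it extends to a holomorphic motion $h\colon\Lambda\times J(P_c)\to\EC$, and joint continuity shows $h(\lambda,\cdot)$ conjugates $P_c|_{J(P_c)}$ to $f_\lambda$ on the compact connected set $h(\lambda,J(P_c))\subset J(f_\lambda)$.

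Finally I would prove that $h(\lambda,\partial B_0)=\partial B_\lambda$. By Lemma \ref{lema:hyper} the only critical point of $f_\lambda$ in the immediate basin $B_\lambda$ is $\infty$, so $B_\lambda$ is simply connected and the B\"{o}ttcher coordinate $\phi_\lambda\colon B_\lambda\to\EC\setminus\overline{\D}$ is a biholomorphism conjugating $f_\lambda|_{B_\lambda}$ to $w\mapsto w^2$; it depends holomorphically on $\lambda$ (standard near $\infty$, and propagated to all of $B_\lambda$ through $\phi_\lambda\circ f_\lambda=\phi_\lambda^2$), and the same holds for $\phi_0\colon B_0\to\EC\setminus\overline{\D}$. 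Hyperbolicity of $f_\lambda$ makes $\partial B_0=J(P_c)$ and $\partial B_\lambda$ locally connected, so $\phi_0^{-1}$ and $\phi_\lambda^{-1}$ extend continuously to $\partial\D$, mapping it onto $\partial B_0$ and $\partial B_\lambda$ and semiconjugating $w\mapsto w^2$ to the boundary dynamics. For each rational angle $\theta$, the landing point in $B_\lambda$ of the external ray of angle $\theta$ is a periodic point of $f_\lambda$ depending continuously on $\lambda$, equal at $\lambda=0$ to $\phi_0^{-1}(e^{2\pi\ii\theta})$, so by matching itineraries it must coincide with $h(\lambda,\phi_0^{-1}(e^{2\pi\ii\theta}))$. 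Hence $h(\lambda,\cdot)$ and $\phi_\lambda^{-1}\circ\phi_0$ agree on the dense subset of $\partial B_0$ of rational-angle landing points; being continuous they agree on $\partial B_0$, whence $h(\lambda,\partial B_0)=\phi_\lambda^{-1}(\phi_0(\partial B_0))=\phi_\lambda^{-1}(\partial\D)=\partial B_\lambda$.

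The step I expect to be the main obstacle is this last identification, specifically the claim that $h(\lambda,\cdot)$ coincides with $\phi_\lambda^{-1}\circ\phi_0$ on $\partial B_0$, i.e. that the external-ray landing pattern of $\partial B_0$ transports along the motion to that of $\partial B_\lambda$. This is the rigidity of the combinatorics within a hyperbolic family; it should follow from the continuity of external-ray landing for the locally connected boundaries $\partial B_0$ and $\partial B_\lambda$ together with the fact that the motion already matches the periodic landing points, but it is where the genuine work lies. A secondary point needing care, though routine, is the no-collision property justifying the use of the $\lambda$-Lemma over the whole disk $\Lambda$ rather than a smaller one.
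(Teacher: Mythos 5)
Your construction of the motion itself is essentially the paper's: continuation of repelling periodic points over the simply connected $\Lambda$, injectivity, then the $\lambda$-Lemma. (Your ``coding forbids collisions'' justification is vaguer than necessary; the clean argument is that a collision of two distinct continuations at some $\lambda_0$ would force a multiple root of $f_{\lambda_0}^{\circ k}(z)-z$, i.e.\ a parabolic cycle, contradicting hyperbolicity on $\Lambda$ --- or the paper's propagation argument via the motion based at $\lambda_0$.) The genuine problem is your final step, the identification $h(\lambda,\partial B_0)=\partial B_\lambda$. You start from ``by Lemma \ref{lema:hyper} the only critical point of $f_\lambda$ in $B_\lambda$ is $\infty$, so $B_\lambda$ is simply connected and $\phi_\lambda$ conjugates $f_\lambda|_{B_\lambda}$ to $w\mapsto w^2$, and $\partial B_\lambda$ is locally connected.'' Lemma \ref{lema:hyper} does not give this: it locates the free critical points in components $A_\lambda$, $U_\lambda$ and says every critical orbit converges to $\infty$, but it does not assert that $A_\lambda$, $U_\lambda$, $T_\lambda$ are distinct from $B_\lambda$; a priori all of them could coalesce into one completely invariant component (which is exactly what happens for other parameters, when the Julia set is a Cantor set). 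In the paper these facts --- $B_\lambda\setminus\{\infty\}$ free of critical points and values, simple connectivity of $B_\lambda$, $T_\lambda$, $U_\lambda$ --- are the \emph{Corollary proved after} Lemma \ref{holo-motion}, using the motion itself as the barrier separating $0$ and $c$ from $\infty$. So as written your last step is circular, or at least rests on unproved structure of $B_\lambda$.

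Even granting that structure, the pivotal claim --- that for each periodic angle $\theta$ the landing point of the $\theta$-ray in $B_\lambda$ depends continuously on $\lambda$ and therefore coincides with the holomorphic continuation $h(\lambda,\phi_0^{-1}(e^{2\pi\ii\theta}))$ --- is exactly the point you flag as ``where the genuine work lies,'' and it is not supplied: since $f_\lambda$ has degree $4$, $J(f_\lambda)$ has many more periodic points than $J(f_0)$, so matching angles to continued periodic points cannot be done by combinatorics alone; one needs a uniform-expansion estimate giving continuity of $\lambda\mapsto\phi_\lambda^{-1}$ up to the boundary, which is comparable in difficulty to the lemma itself. (Also, $\phi_\lambda^{-1}\circ\phi_0$ is not defined on $\partial B_0$, since $\partial B_0$ is not a Jordan curve; you would have to phrase everything through the landing maps $L_0,L_\lambda:\partial\D\to\partial B_0,\partial B_\lambda$ and prove $h_\lambda\circ L_0=L_\lambda$.) The paper avoids all of this with a soft argument you should compare with: by equivariance $h_\lambda\circ f_0=f_\lambda\circ h_\lambda$ the continuum $h_\lambda(\partial B_0)$ is $f_\lambda$-invariant; letting $\widetilde B_\lambda$ be the complementary component containing $\infty$, either $f_\lambda(\widetilde B_\lambda)=\widetilde B_\lambda$ or $f_\lambda(\widetilde B_\lambda)=\EC$, and since $0\notin f_\lambda(\widetilde B_\lambda)$ for small $\lambda$ the first alternative holds, forcing $\widetilde B_\lambda=B_\lambda$ and hence $\partial B_\lambda=h_\lambda(\partial B_0)$, after which the conclusion is spread to all of $\Lambda$. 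If you want to keep your external-ray route, you must first prove the simple connectivity and critical-point statements for $B_\lambda$ independently and then actually establish the continuity of ray landing points in $\lambda$; as it stands the proposal has a gap at its crucial step.
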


\begin{proof}
We first prove that every repelling periodic point of $f_0$ moves holomorphically in $\Lambda$.  Let $z_0\in J(f_0)$ be such a point with period $p_0$. For small $\lambda$, the map $f_\lambda$ is a small perturbation of $f_0$. By implicit function theorem, there is a neighborhood $U_0$ of $0$ such that $z_0$ becomes a repelling point $z_\lambda$ of $f_\lambda$ with the same period $p_0$, for all $\lambda\in U_0$. On the other hand, by Lemma \ref{lema:hyper}, $f_\lambda$ has exactly one attracting cycle for all $\lambda\in\Lambda\setminus\{0\}$. This implies that each repelling cycle of $f_\lambda$ moves holomorphically throughout $\Lambda\setminus\{0\}$ (see \cite[Theorem 4.2]{McM94b}). In particular, by $\lambda$-lemma, there is a holomorphic motion of $J(f_{\lambda_0})$ parameterized by $\Lambda\setminus\{0\}$ and with any given base point $\lambda_0$.

Since $\Lambda$ is simply connected, there is a holomorphic map $Z:\Lambda\rightarrow \mathbb{C}$ such that $Z(\lambda)=z_\lambda$ for $\lambda\in U_0$. Note that $Z$ is a map depending on the periodic points. Let $\textup{Per}{(f_0)}$ be all repelling periodic points of $f_0$. Then the map $h:\Lambda\times \textup{Per}{(f_0)}\rightarrow\mathbb{C}$ defined by $h(\lambda,z_0)=Z(\lambda)$ is a holomorphic motion. Indeed, it is sufficient to verify that $h(\lambda,\cdot)$ is injective on $\textup{Per}(f_0)$ for any fixed $\lambda\in\Lambda\setminus\{0\}$. Suppose that there exists $\lambda_0\in\Lambda\setminus\{0\}$ such that $h(\lambda_0,z')=h(\lambda_0,z'')$, where $z'$ and $z''$ are different points contained in $\textup{Per}(f_0)$. Since $J(f_\lambda)$ moves holomorphically in $\Lambda\setminus\{0\}$, it implies that $h(\lambda,z')=h(\lambda,z'')$ for all $\lambda\in\Lambda\setminus\{0\}$. But this is a contradiction since $h(\lambda,z')\neq h(\lambda,z'')$ if $\lambda\neq 0$ is small enough. Notice that $J(f_0)=\overline{\textup{Per}{(f_0)}}$. By Lemma \ref{lema-lambda}, we obtain a holomorphic motion which is an extension of $h$, say $h:\Lambda\times J(f_0)\rightarrow\mathbb{C}$. It is obvious that $h(\lambda, J(f_0))$ is a connected subset\footnote{We will see later that $h(\lambda, J(f_0))$ is actually a Julia component of $f_\lambda$.} of $J(f_\lambda)$.

To finish, we show that $h(\lambda, \partial B_0)=\partial B_\lambda$ for all $\lambda\in \Lambda$. Actually, since $J(f_0)$ moves holomorphically in $\Lambda$, we have $h_\lambda\circ f_0(z)=f_\lambda\circ h_\lambda(z)$ for all $z\in J(f_0)$, where $h_\lambda:=h(\lambda,\cdot)$ (see \cite[\S 4.1]{McM94b}). Let $\widetilde{B}_\lambda$ be the connected component of $\EC\setminus h_\lambda(\partial B_0)$ containing $\infty$. Since $h_\lambda(\partial B_0)$ is fixed by $f_\lambda$, it implies that $f_\lambda(\widetilde{B}_\lambda)=\widetilde{B}_\lambda$ or $f_\lambda(\widetilde{B}_\lambda)=\EC$. However, if $\lambda$ is small enough, then $h_\lambda(\partial B_0)$ is close to $\partial B_0$ and $c\not\in f_\lambda(\widetilde{B}_\lambda)$. Indeed, if $f_\lambda(z)=c$, then $z^2(z-c)^2+\lambda=0$. This implies that $z$ is close to $0$ or $c$ and hence $c\not\in f_\lambda(\widetilde{B}_\lambda)$. Therefore, $f_\lambda(\widetilde{B}_\lambda)=\widetilde{B}_\lambda$ for sufficiently small $\lambda$ and hence $\widetilde{B}_\lambda=B_\lambda$ is the immediate attracting basin of $\infty$ of $f_\lambda$. By the uniqueness of the holomorphic motion, we have $f_\lambda(B_\lambda)=B_\lambda$ for all $\lambda\in\Lambda$. This implies $h(\lambda, \partial B_0)=\partial B_\lambda$ for all $\lambda\in\Lambda$.
\end{proof}

\begin{rmk}
The holomorphic motion has also been used in \cite{BDGR08} and \cite{GMR13} to prove the structure of the Julia sets still exists after perturbation for other singularly perturbed families. The authors in those papers proved that the B\"{o}ttcher map defined on the basin of infinity varies holomorphically.

However, our proof here can be adopted to the following two more general cases: the first one is when $\infty$ lies in a super-attracting basin whose period is greater than one, and the second is when the corresponding basins of the perturbed maps are no longer super-attracting where the B\"{o}ttcher map cannot be defined. This observation will be used in the next section.
\end{rmk}

Recall that $A_\lambda$ is the Fatou component of $f_\lambda$ containing $3$ critical points $\{c_j^\lambda:1\leq j\leq 3\}$ and $U_\lambda$ is the Fatou component containing the critical point $c_0^\lambda$.

\begin{cor}\label{cor:different}
The four Fatou components $B_\lambda$, $A_\lambda$, $U_\lambda$ and $T_\lambda$ are pairwise distinct. Moreover,
$A_\lambda$ is an annulus and the forward images of $f_\lambda(A_\lambda)$ are simply connected. In particular, $U_\lambda$, $T_\lambda$ and $B_\lambda$ are all simply connected.
\end{cor}

\begin{proof}
Note that we have the following orbit under $f_\lambda$ (see Figure \ref{Fig_diagram-1}):
\begin{equation}\label{equ:crit-orbit-1}
A_\lambda\to f_\lambda(A_\lambda)\to\cdots\to U_\lambda\to T_\lambda\to B_\lambda\to B_\lambda.
\end{equation}
If $B_\lambda=A_\lambda$, then we have $B_\lambda=A_\lambda=U_\lambda=T_\lambda$ since $f_\lambda(B_\lambda)=B_\lambda$. In this case $B_\lambda$ contains all the critical points of $f_\lambda$ and the Julia set of $f_\lambda$ is a Cantor set \cite[Lemma 8.1]{Mil93}. This is impossible since $J(f_\lambda)$ contains a homeomorphic image of $J(f_0)$ by Lemma \ref{holo-motion}. This implies that $B_\lambda\neq A_\lambda$. Since $A_\lambda$ contains an annulus separating $T_\lambda$ and $B_\lambda$, we have $B_\lambda\neq T_\lambda$. Therefore, by \eqref{equ:crit-orbit-1} we know that $B_\lambda$, $A_\lambda$, $U_\lambda$ and $T_\lambda$ are four pairwise different Fatou components.

According to Lemmas \ref{lema:hyper} and \ref{holo-motion}, it follows that $B_\lambda\setminus\{\infty\}$ does not contain any critical points and critical values. Hence $B_\lambda$ is simply connected. By \eqref{equ:crit-orbit-1}, we know that $f_\lambda:T_\lambda\setminus\{c\}\to B_\lambda\setminus\{\infty\}$ is a degree two covering map. It follows that $T_\lambda$ is simply connected. Similarly, $U_\lambda=f_\lambda^{\circ (p-1)}(A_\lambda)$ is simply connected. Moreover, $f_\lambda(A_\lambda)$, $\cdots$, $f_\lambda^{\circ (p-2)}(A_\lambda)$ are simply connected since they do not contain critical points (if $p\geq 3$). Since $f_\lambda:A_\lambda\to f_\lambda(A_\lambda)$ is a degree $3$ branched covering and $A_\lambda$ contains exactly $3$ critical points, it implies that $A_\lambda$ is an annulus by Riemann-Hurwitz's formula.
\end{proof}

\subsection{The semi-buried component and buried Jordan curves}

Recall that $\Lambda=\D(0,\delta)$ and $h:\Lambda\times J(f_0)\to\C$ is a holomorphic motion introduced in Lemma \ref{holo-motion}. Let us fix $\lambda\in\Lambda\setminus\{0\}$ and define $h_\lambda=h(\lambda,\cdot):J(f_0)\to\C$. Then
\begin{equation*}
J_0(f_\lambda):=h_\lambda(J(f_0))
\end{equation*}
is contained in $J(f_\lambda)$ and $h_\lambda:J(f_0)\to J_0(f_\lambda)$ is a restriction of a quasiconformal homeomorphism by Lemma \ref{lema-lambda}.

Let $T_0$ be the Fatou component of $f_0=P_c$ containing the critical value $c$. Note that $h_\lambda(\partial T_0)$ and $\partial T_\lambda$ are very different. In particular, $\partial T_\lambda$ is compactly contained in the interior of $h_\lambda(T_0)$.

\begin{prop}\label{prop-homeo-11}
Let $0<|\lambda|<\delta$. Then $J_0(f_\lambda)$ is a semi-buried Julia component of $f_\lambda$ and $J(f)$ contains infinitely many buried Julia components which are Jordan curves.
\end{prop}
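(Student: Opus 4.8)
The plan is to follow the same two-part strategy used in \S\ref{sec-semi-buried}: first show that $J_0(f_\lambda)=h_\lambda(J(f_0))$ is a genuine Julia component of $f_\lambda$ (not just a connected subset of the Julia set), then show it is semi-buried from the Fatou component $h_\lambda(T_0)$ in which $\partial T_\lambda$ sits, and finally extract the buried Jordan curves from the annulus-to-disk geometry. For the first task I would argue exactly as in the first paragraph of the proof of Proposition~\ref{prop-homeo}: any point $z_0\in J_0(f_\lambda)$ has neighboring points in $B_\lambda$ (since $h_\lambda(\partial B_0)=\partial B_\lambda$ by Lemma~\ref{holo-motion}), those points escape to $\infty$ while the orbit of $z_0$ stays on $J_0(f_\lambda)$, so the iterates are not normal at $z_0$; hence $J_0(f_\lambda)\subset J(f_\lambda)$. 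To see it is a full component, I would produce, inside $h_\lambda(T_0)$, a sequence of Julia components $\{J_k\}$ and Fatou components converging to $\partial T_\lambda=h_\lambda(\partial T_0)\subset J_0(f_\lambda)$ in the Hausdorff metric, and then pull back under the (finitely many) iterates to each Fatou component of $f_0$ that maps onto $T_0$, so that every component $V$ of $h_\lambda(F(f_0))$ has a sequence of distinct Julia components accumulating on $\partial V$. Any larger Julia component containing $J_0(f_\lambda)$ would then have to meet some such $V$, contradicting the accumulation; this forces $J_0(f_\lambda)$ to be a component and simultaneously shows it is semi-buried from every component of $h_\lambda(F(f_0))$.

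The key geometric input — and the step I expect to be the main obstacle — is producing that first Julia component $J_0$ inside the annulus $h_\lambda(T_0)\setminus \overline{T_\lambda}$ together with a Fatou component $U_0$ also separating $\partial T_\lambda$ from $\partial h_\lambda(T_0)$. This is where the perturbation at the critical value pays off: by Lemma~\ref{lema:hyper} the Fatou component $A_\lambda$ is an \emph{annulus} carrying three critical points and mapping by degree $3$ onto the simply connected $f_\lambda(A_\lambda)$, so some preimage configuration of the escaping dynamics realizes precisely the annulus-to-disk branched covering geometry of Lemma~\ref{lema:cover-disk-to-disk} (with $d_{i,1}=1$, $d_i=2$, $m_i=2$, as flagged in the remark after that lemma). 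Concretely, the region between $\partial T_\lambda$ and $h_\lambda(\partial T_0)$ contains an $f_\lambda$-invariant (up to iterating $p$ times) nest of annuli on which the return map behaves like $z\mapsto z^{d}$-type covering composed with the degree-$2$ branched cover $T_\lambda\to B_\lambda$; one annulus in the nest maps over the target disk and so contains a Julia component $J_0$ separating $0$ (or $\infty$) from $\partial T_\lambda$, and an adjacent annulus lands in the escaping Fatou set giving $U_0$. I would make this precise by transporting, via the B\"ottcher coordinate of $B_\lambda$ and the dynamics on $T_\lambda$, the explicit McMullen-map picture that underlies Lemma~\ref{lema:ann-to-disk-std}, exactly as the surgery construction does, but now reading it off the \emph{actual} rational map $f_\lambda$ rather than a quasiregular model.

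Once $J_0$ and $U_0$ are in hand, the inductive pull-back is routine: since $f_\lambda^{\circ p}:A(\gamma,\partial T_\lambda)\to A(\gamma',\partial T_\lambda)$ is, for a suitable equipotential-type curve $\gamma$, a covering of some degree $\geq 2$, the preimages $J_k$ and $U_k$ form nested sequences shrinking to $\partial T_\lambda$ in the Hausdorff metric (one controls the levels through the B\"ottcher coordinate of $T_\lambda$, which exists since $T_\lambda$ is simply connected and eventually super-attracting), and the same argument at every Fatou component of $f_0$ that is eventually mapped to $T_0$ shows $J_0(f_\lambda)$ is semi-buried from all of $h_\lambda(F(f_0))$, proving it is a semi-buried Julia component. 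Finally, for the buried Jordan curves: the annuli $J_k$ constructed above that lie strictly between two Fatou components of the nest (not accumulating on $\partial T_\lambda$) are themselves Jordan curves — they arise as components of $f_\lambda^{-N}$ of the McMullen-type circle-nest — and each such curve, together with all its iterated preimages, is disjoint from the boundary of \emph{every} Fatou component because it is sandwiched between two Fatou annuli on both sides in its own local nest and its forward orbit stays in that escaping region; hence these are buried Julia components which are Jordan curves. The point to be careful about is verifying the \emph{buried} (not merely semi-buried) property for these curves: one must check that no Fatou component boundary can reach them from the side of $\partial T_\lambda$ either, which follows because between any such Jordan curve and $\partial T_\lambda$ there are infinitely many Fatou annuli $U_k$ of the nest, so $\partial T_\lambda$ itself — and a fortiori any Fatou boundary — is separated from the curve by a Fatou component.
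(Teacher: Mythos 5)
Your plan for the semi-buried half is essentially the paper's argument: the paper fixes the three annuli $V_1=A(\partial^+A_\lambda,h_\lambda(\partial T_0))$, $V_2=A(\partial^-A_\lambda,\partial T_\lambda)$ and $V=A(\partial T_\lambda,h_\lambda(\partial T_0))$ on the two sides of the critical Fatou annulus $A_\lambda$, notes that $f_\lambda^{\circ p}:V_1\to V$ and $f_\lambda^{\circ p}:V_2\to V$ are (unbranched) coverings of degrees $2$ and $4$, and then runs the nested pull-back argument of Proposition \ref{prop-homeo} inside $V_1$ to accumulate Julia components and annular Fatou components on $h_\lambda(\partial T_0)$, finishing by taking preimages of $h_\lambda(T_0)$. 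Your sketch of this part is workable, but be careful with the slip ``$\partial T_\lambda=h_\lambda(\partial T_0)$'': these sets are very different ($\partial T_\lambda$ is compactly contained in the interior of $h_\lambda(T_0)$), the whole construction lives in the annulus $V$ between them, and the accumulating sequences must converge to $h_\lambda(\partial T_0)$ (the part of $J_0(f_\lambda)$), not to $\partial T_\lambda$; your later sentences show you know the two boundaries are distinct, but the earlier identification would make the accumulation argument aim at the wrong set.

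The genuine gap is the buried Jordan curves. You justify the Jordan-curve property by saying these components ``arise as components of $f_\lambda^{-N}$ of the McMullen-type circle-nest,'' but there is no circle-nest here and the relevant components are not finite preimages of any known curve: in the paper they are the limit continua $J_{j_1j_2\cdots}=\bigcap_k g_{j_k}\circ\cdots\circ g_{j_1}(V)$ built from the \emph{two} inverse branches $g_j:V\to V_j$ of $f_\lambda^{\circ p}$, i.e.\ a Cantor family indexed by itineraries in $\{1,2\}^{\N}$, which is a different family from the single backward nest $\{J_k\}$ you use for semi-buriedness (those converge to $h_\lambda(\partial T_0)$, are backward images of one component $J_0$ whose topology is never determined, and need not be wandering). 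Showing that the members of the Cantor family lying in the interior of $V$ are Jordan curves is exactly the nontrivial step: the paper uses hyperbolicity of $f_\lambda$ (Lemma \ref{lema:hyper}) to get local connectivity of Julia components via \cite{TY96} and \cite{PT00}, and then the Pilgrim--Tan classification (\cite{PT00}, Lemma 2.4 and the Proposition there, Case 2), using that the inclusions $V_j\hookrightarrow V$ are essential. Without local connectivity and that classification, ``a separating continuum sandwiched between Fatou annuli'' does not yield a Jordan curve (it could a priori carry decorations), so your key step is unsupported; by contrast, once one has these wandering Jordan-curve components in the interior of $V$, the burying argument (separation from every Fatou boundary on both sides by the surrounding members of the family and the pulled-back Fatou annuli) is the comparatively routine part, along the lines you indicate.
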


\begin{proof}
Let $D_1$, $D_2$ and $D_3$, respectively, be the unique unbounded component of $\C\setminus \overline{A}_\lambda$, unique bounded component of $\C\setminus \overline{A}_\lambda$ containing $c$ and the unique unbounded component of $\C\setminus \overline{T}_\lambda$. By Lemma \ref{lema:hyper}, $f_\lambda$ is hyperbolic. This implies that $\partial D_i$ does not contain any critical points, where $i=1,2,3$. Note that $f_\lambda^{\circ p}:\partial D_3\to h_\lambda(\partial T_0)$, $f_\lambda^{\circ p}:\partial D_2\to \partial D_3$ and $f_\lambda^{\circ p}:\partial D_1\to \partial D_3$ are covering maps. It follows that $\partial D_1$, $\partial D_2$ and $\partial D_3$ are Jordan curves since $h_\lambda(\partial T_0)$ is.
For simplicity, we denote
three annuli (see Figure \ref{Fig_diagram-1-local})
\begin{equation*}
V_1=A(\partial D_1,h_\lambda(\partial T_0)),~
V_2=A(\partial D_2,\partial D_3) \text{ and }
V=A(\partial D_3,h_\lambda(\partial T_0)).
\end{equation*}
Then we know that $f_\lambda^{\circ p}:V_1\to V$ and $f_\lambda^{\circ p}:V_2\to V$ are covering maps between annuli with degree $2$ and $4$ respectively.

\begin{figure}[!htpb]
  \setlength{\unitlength}{1mm}
  \centering
  \includegraphics[width=0.8\textwidth]{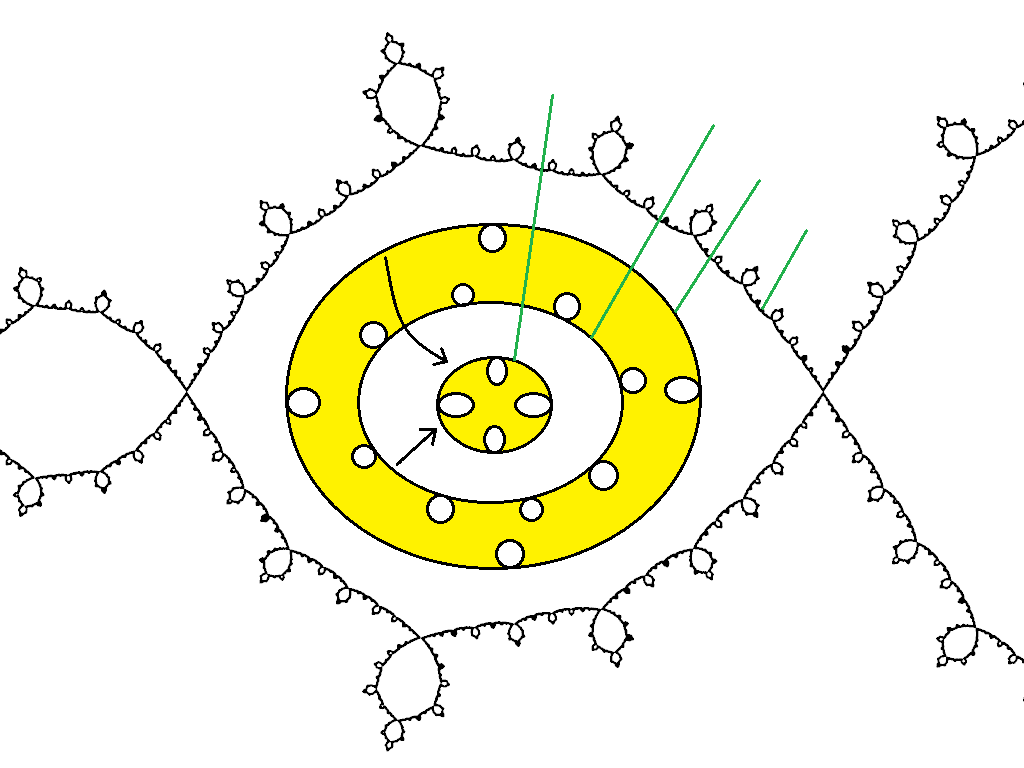}
  \put(-90,59){$B_\lambda$}
  \put(-61.5,47){$f_\lambda^{\circ p}$}
  \put(-60.2,28.9){$f_\lambda^{\circ p}$}
  \put(-54,35){$T_\lambda$}
  \put(-45.5,35){$V_2$}
  \put(-38,33){$A_\lambda$}
  \put(-29,35){$V_1$}
  \put(-26,54.5){$h_\lambda(\partial T_0)$}
  \put(-25.6,59){$\partial D_1$}
  \put(-29,64){$\partial D_2$}
  \put(-49,68){$\partial D_3$}
  \caption{The sketch of the locations of two annuli $V_1$ and $V_2$. Some Fatou components and mapping relations are also marked.}
  \label{Fig_diagram-1-local}
\end{figure}

By a similar argument as in the proof of Proposition \ref{prop-homeo}, it follows that $V_1$ contains a sequence of Julia components and a sequence of annular Fatou components of $f_\lambda$, such that they separate $c$ from $\infty$ and converge to $h_\lambda(\partial T_0)$ in the Hausdorff metric. This implies that $h_\lambda(\partial T_0)$ is semi-buried from $h_\lambda(T_0)$. Taking preimages of $h_\lambda(T_0)$ under $f_\lambda$, it is easy to see that $J_0(f_\lambda)$ is a semi-buried Julia component of $f_\lambda$.

Let $(j_0,j_1,\cdots,j_k,\cdots)$ be any infinite sequence satisfying $j_k\in\{1,2\}$. We denote
\begin{equation*}
J_{j_0 j_1\cdots j_k\cdots}:=\{z\in \overline{V}:~f_\lambda^{\circ kp}(z)\in \overline{V}_{j_k}, \forall\,k\in\N\}.
\end{equation*}
Note that $V_j\subset V$ and the identity $\textup{id}:V_j\hookrightarrow V$ is not homotopic to a constant map. By \cite[Lemma 2.4 (Case 2)]{PT00} and \cite[Proposition (Case 2)]{PT00}, every $J_{j_0 j_1\cdots j_k\cdots}$ is a Jordan curve surrounding the point $c\in T_\lambda$. Indeed, an intuitive observation is that each $J_{j_0 j_1\cdots j_k\cdots}$ is the intersection of a sequence of nested closed annuli surrounding $c$ by taking preimages of $f_\lambda^{\circ p}$ in $V$. In fact, according to \cite[Proposition 7.2]{McM88} and \cite[\S 3]{DLU05}, the set
\begin{equation*}
\mathcal{J}_0:=\{J_{j_0 j_1\cdots j_k\cdots}:j_k\in\{1,2\} \text{ and }k\in\N\}
\end{equation*}
is a Cantor set of circles which is contained in the Julia set of $f_\lambda$.

Except $J_{1 1\cdots 1\cdots}=h_\lambda(\partial T_0)$, any other $J_{j_0 j_1\cdots j_k\cdots}$ separates $c$ from $h_\lambda(\partial T_0)$. Since $V$ contains a doubly connected Fatou component $A_\lambda$, it implies that every pair of different $J_{j_0 j_1\cdots j_k\cdots}$'s are contained in different Julia components of $f_\lambda$. Moreover, $J_{j_0 j_1\cdots j_k\cdots}$ is buried in $\mathcal{J}_0$ if and only if the sequence $(j_0,j_1,\cdots,j_k,\cdots)$ does not end with infinitely many successive $1$'s.
Therefore, if $(j_0,j_1,\cdots,j_k,\cdots)$ does not end with infinitely many successive $1$'s, then $J_{j_0 j_1\cdots j_k\cdots}$ is a buried Julia component of $f_\lambda$ which is a Jordan curve. This implies that there are infinitely many buried Julia components which are Jordan curves. This finishes the proof of Proposition \ref{prop-homeo-11} and Theorem \ref{thm-exam}.
\end{proof}

\begin{rmk}
In fact one can obtain a precise classification of the topology of the Julia components of $f_\lambda$. Specifically, some Julia components are semi-buried copies of $J(P_c)$, some are buried Jordan curves and some are buried singletons. One can see \cite{BDGR08} and \cite{GMR13} for a similar classification.
\end{rmk}

\section{The cubic Julia sets containing buried Jordan curves}\label{sec-cubic}

In this section, we aim to construct some cubic rational maps whose Julia sets contain buried Jordan curves and more importantly, we show that these cubic rational maps contain a buried Julia component which is homeomorphic to the Julia set of a quadratic rational map. The construction is inspired by combining Lemma \ref{lema:cover-disk-to-disk} and Proposition \ref{prop-buried-p-1}.

For the quadatic rational map $Q_a(z)=1+a/z^2$, where $a\in\C\setminus\{0\}$, there are two critical points $0$ and $\infty$ in the critical orbit of $Q_a$ beginning with $0$. Therefore, similar to the consideration in last section, a natural idea is to make a perturbation of $Q_a$ at the critical value $1$. Let $a\not\in\{0,-(3+\sqrt{5})/2\}$ be the center of a hyperbolic component of $Q_a(z)=1+a/z^2$. We consider the following family:
\begin{equation*}
f_\nu(z)=1+\frac{a}{z^2}+\frac{(1+a+\sqrt{-a})\,\nu}{z-1-\nu},
\end{equation*}
where $\nu\in\C\setminus\{0\}$. This perturbation is made at the critical value $1$ of $Q_a$ so that $f_\nu(1)=-\sqrt{-a}$. Note that $-\sqrt{-a}$ is a preimage of $0$ under $Q_a$. Recall that $a=-(3+\sqrt{5})/2$ is excluded since we need to guarantee that $1+a+\sqrt{-a}\neq 0$.

\begin{lema}\label{lema:Jordan-domain-1}
If $a\neq 0$ is the center of a hyperbolic component of $Q_a$, then every Fatou component of $Q_a$ is a Jordan domain.
\end{lema}

\begin{proof}
Note that if $a\neq 0$ is the center of a hyperbolic component, then $Q_a$ is a critically finite quadratic rational map. According to \cite[Theorem 1.1]{Pil96} all the Fatou components of $Q_a$ are Jordan domains.
\end{proof}

The main content in this section is to prove that if $\nu\neq 0$ is small enough, then the Julia set of $f_\nu$ contains infinitely many buried Jordan curves and a fully buried Julia component which is homeomorphic to the Julia set of $Q_a(z)=1+a/z^2$.
In the remaining of this section, we always assume that $\nu\neq 0$ is sufficiently small. Let $p\geq 3$ be the minimal period of the critical point of $Q_a(z)=1+a/z^2$, i.e. $Q_a^{\circ p}(0)=0$ and $Q_a^{\circ p}(\infty)=\infty$.

\begin{lema}[See Figure \ref{Fig_diagram-2}]\label{lema:hyper-2}
There exists $\sigma>0$ such that if $0<|\nu|<\sigma$, then there exist five different Fatou components $A_\nu$, $U_\nu^1$, $U_\nu^{-1}$, $U_\nu^0$ and $U_\nu^\infty$ such that
\begin{enumerate}
\item $U_\nu^1$, $U_\nu^{-1}$, $U_\nu^0$ and $U_\nu^\infty$ contain $1$, $-\sqrt{-a}$, $0$ and $\infty$ respectively;
\item $A_\nu$ contains exactly two critical points $\{c_j^\nu:j=1,2\}$ of $f_\nu$ such that $c_j^\nu\in\{z:|\nu|^{9/16}\leq |z-1|\leq |\nu|^{7/16}\}\subset A_\nu$; In particular, $\lim_{\nu\to 0}c_j^\nu=1$;
\item $U_\nu^\infty$ contains exactly one critical point $c_\infty^\nu$ of $f_\nu$ such that $\lim_{\nu\to 0}c_\infty^\nu=\infty$;
\item $\overline{\D}(1,|\nu|^{11/8})\subset U_\nu^1$;
\item $f_{\nu}^{\circ (p-2)}(A_\nu)=U_\nu^0$, $f_{\nu}(U_\nu^0)=U_\nu^\infty$, $f_{\nu}(U_\nu^\infty)=U_\nu^1$ and $f_{\nu}(U_\nu^1)=U_\nu^{-1}$ and $f_{\nu}(U_\nu^{-1})=U_\nu^0$; and
\item The Fatou components $U_\nu^1$, $U_\nu^{-1}$, $U_\nu^0$ and $U_\nu^\infty$ contains a cycle of attracting periodic points with period $4$ and $f_\nu$ is hyperbolic.
\end{enumerate}
\end{lema}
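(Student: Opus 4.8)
The plan is to follow the pattern of Lemma~\ref{lema:hyper}: to locate the critical points of $f_\nu$ explicitly, to produce an attracting cycle of period $4$ lying near $\{0,\infty,1,-\sqrt{-a}\}$, and then to check that every critical point falls into its basin in the configuration described by (a)--(d). Throughout, write $c:=1+a+\sqrt{-a}$; the excluded parameters $a=0$ and $a=-(3+\sqrt5)/2$ are exactly the cases $a=0$ and $c=0$ (with the principal branch of $\sqrt{\cdot\,}$), so for an admissible $a$ the map $f_\nu$ is a genuine degree-three perturbation of $Q_a$. At the outset I would record the two elementary facts $f_\nu(1)=1+a-c=-\sqrt{-a}$ (this is the whole purpose of the coefficient $c$: it throws the critical value $f_\nu(1)$ onto a $Q_a$-preimage of $0$) and $f_\nu(-\sqrt{-a})=\dfrac{c\nu}{-\sqrt{-a}-1-\nu}=-\dfrac{c}{1+\sqrt{-a}}\,\nu+O(\nu^2)$, which is nonzero and of exact order $|\nu|$. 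Differentiating, $f_\nu'(z)=-\dfrac{2a}{z^3}-\dfrac{c\nu}{(z-1-\nu)^2}$, so the critical points of $f_\nu$ are the double pole $0$ (with $f_\nu(0)=\infty$) and the three roots of $c\nu z^3+2a(z-1-\nu)^2=0$; as $\nu\to0$ two of these roots collapse onto the double root $z=1$ and one escapes to $\infty$, and a one-line expansion gives $c_{1,2}^\nu=1\pm\sqrt{-c\nu/(2a)}+o(|\nu|^{1/2})$ and $c_\infty^\nu=-2a/(c\nu)+O(1)$. In particular $|c_j^\nu-1|\asymp|\nu|^{1/2}\in[|\nu|^{5/8},|\nu|^{3/8}]$ for small $\nu$, and $c_\infty^\nu\to\infty$, which gives the locations claimed in (a) and (b).

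Next, to build the attracting $4$-cycle I would set $D_\nu:=\overline{\D}(1,|\nu|^{11/8})$. Since $|\nu|^{11/8}\ll|\nu|$, the pole $1+\nu$ lies outside $D_\nu$, so $f_\nu$ is holomorphic there, and one tracks the orbit of $D_\nu$ in four stages: first $f_\nu(D_\nu)\subseteq\overline{\D}(-\sqrt{-a},O(|\nu|^{3/8}))$, because $f_\nu(1+\delta)=-\sqrt{-a}+O(|\nu|^{3/8})$ for $|\delta|\le|\nu|^{11/8}$; then $f_\nu^{\circ2}(D_\nu)\subseteq\overline{\D}(0,O(|\nu|^{3/8}))$, using $f_\nu(-\sqrt{-a})=O(|\nu|)$ and the regularity of $f_\nu$ near $-\sqrt{-a}$; then $f_\nu^{\circ3}(D_\nu)\subseteq\{\,|w|>c_1|\nu|^{-3/4}\,\}$, from the blow-up $f_\nu(z)\sim a/z^2$ at the pole $0$; and finally $f_\nu^{\circ4}(D_\nu)\subseteq\overline{\D}(1,O(|\nu|^{3/2}))$, from $f_\nu(w)=1+O(|\nu|^{3/2})$ for $|w|\ge c_1|\nu|^{-3/4}$. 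Since $|\nu|^{3/2}\ll|\nu|^{11/8}$ the last disk is compactly contained in $D_\nu$, so $f_\nu^{\circ4}$ maps $D_\nu$ into a subdisk of relative radius $O(|\nu|^{1/8})$; the Schwarz lemma then makes $f_\nu^{\circ4}|_{D_\nu}$ a strong contraction, so it has a unique attracting fixed point $z^\ast=1+O(|\nu|^{3/2})$. Its orbit $z^\ast\mapsto f_\nu(z^\ast)\mapsto f_\nu^{\circ2}(z^\ast)\mapsto f_\nu^{\circ3}(z^\ast)$ sits near $1,-\sqrt{-a},0,\infty$ respectively; I would let $U_\nu^1\supseteq D_\nu$, $U_\nu^{-1}$, $U_\nu^0$, $U_\nu^\infty$ be the Fatou components of $f_\nu$ containing these four points, which gives the cyclic permutation in (d) and, by construction, statement (c).

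Finally I would put the remaining critical points into this basin. Because $f_\nu(\{|w|>c|\nu|^{-3/4}\})\subseteq\overline{\D}(1,O(|\nu|^{3/2}))\subseteq D_\nu\subseteq U_\nu^1$ for \emph{every} constant $c>0$, one gets $\{|w|>c|\nu|^{-3/4}\}\subseteq U_\nu^\infty$ for all $c>0$ (and $\nu$ small enough depending on $c$); in particular $\infty$ and $c_\infty^\nu$ (of modulus $\asymp|\nu|^{-1}$) lie in $U_\nu^\infty$, which is (b), and the same reasoning shows $\overline{\D}(0,C|\nu|^{3/8})\setminus\{0\}$ maps into some $\{|w|>c|\nu|^{-3/4}\}\subseteq U_\nu^\infty$ for every constant $C>0$, whence $\overline{\D}(0,C|\nu|^{3/8})\subseteq U_\nu^0$ and $0\in U_\nu^0$. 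For $c_1^\nu$ and $c_2^\nu$ I would use that on the annulus $\mathcal{A}:=\{|\nu|^{5/8}\le|z-1|\le|\nu|^{3/8}\}$ one has $f_\nu(z)=1+a+O(|\nu|^{3/8})=Q_a(1)+O(|\nu|^{3/8})$, together with $Q_a^{\circ(p-2)}(1)=0$; hence the images $f_\nu^{\circ k}(\mathcal{A})$ for $1\le k\le p-2$ stay $O(|\nu|^{3/8})$-close to the $Q_a$-orbit points $Q_a^{\circ k}(1)$, the last of which is $0$, so $f_\nu^{\circ(p-2)}(\mathcal{A})\subseteq\overline{\D}(0,O(|\nu|^{3/8}))\subseteq U_\nu^0$. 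Thus the connected set $\mathcal{A}$ lies in the Fatou set, hence in a single Fatou component $A_\nu$ with $f_\nu^{\circ(p-2)}(A_\nu)=U_\nu^0$; it contains $c_1^\nu,c_2^\nu$ and no other critical point of $f_\nu$, and the condition $c\neq0$ (i.e.\ $-\sqrt{-a}\neq1+a$) ensures that $A_\nu$ is none of the cycle components. This yields (a) and (d); since every critical point of $f_\nu$ now lies in the basin of the attracting $4$-cycle, $f_\nu$ is hyperbolic.

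The step I expect to be the main obstacle is the construction of the $4$-cycle together with its interaction with the last paragraph: the cycle runs through the strongly expanding neighbourhood of the pole/critical point $0$, where $f_\nu\sim a/z^2$ inflates distances by a factor $\asymp|\nu|^{-3}$, and one must show that this is overcompensated by the strong contractions near $\infty$ and near $1$ (the latter forced by the pole $1+\nu$ sitting just outside $D_\nu$). Making the four-stage estimate close up is exactly what pins the exponent of $D_\nu$ to the open interval $(1,\tfrac{3}{2})$ — whence the value $\tfrac{11}{8}$ — and the same accounting of the competing powers of $|\nu|$ is what dictates the exponents $\tfrac{3}{8}$ and $\tfrac{5}{8}$ appearing in the annulus $\mathcal{A}$.
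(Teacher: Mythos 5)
Your argument is correct and follows essentially the same route as the paper's proof: the same critical-point asymptotics ($|c_j^\nu-1|\asymp|\nu|^{1/2}$, $|c_\infty^\nu|\asymp|\nu|^{-1}$), the same disk $\overline{\D}(1,|\nu|^{11/8})$ with the same four-stage orbit estimates (errors of order $|\nu|^{3/8}$, $|\nu|^{3/8}$, modulus $\succeq|\nu|^{-3/4}$, then $|\nu|^{3/2}$) producing the period-$4$ attracting behaviour, and the same annulus $\{|\nu|^{5/8}\leq|z-1|\leq|\nu|^{3/8}\}$ shadowing the $Q_a$-orbit of $1$ for $p-2$ steps to absorb $c_1^\nu$ and $c_2^\nu$, whence hyperbolicity. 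The only weak point is your aside that $c\neq 0$ ensures $A_\nu$ is none of the cycle components, which is not justified as stated; the paper itself does not prove the distinctness of these Fatou components within this lemma but defers it to the holomorphic-motion argument of Lemma \ref{holo-motion-2}.
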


\begin{figure}[!htpb]
  \setlength{\unitlength}{1mm}
  \centering
  \includegraphics[width=0.95\textwidth]{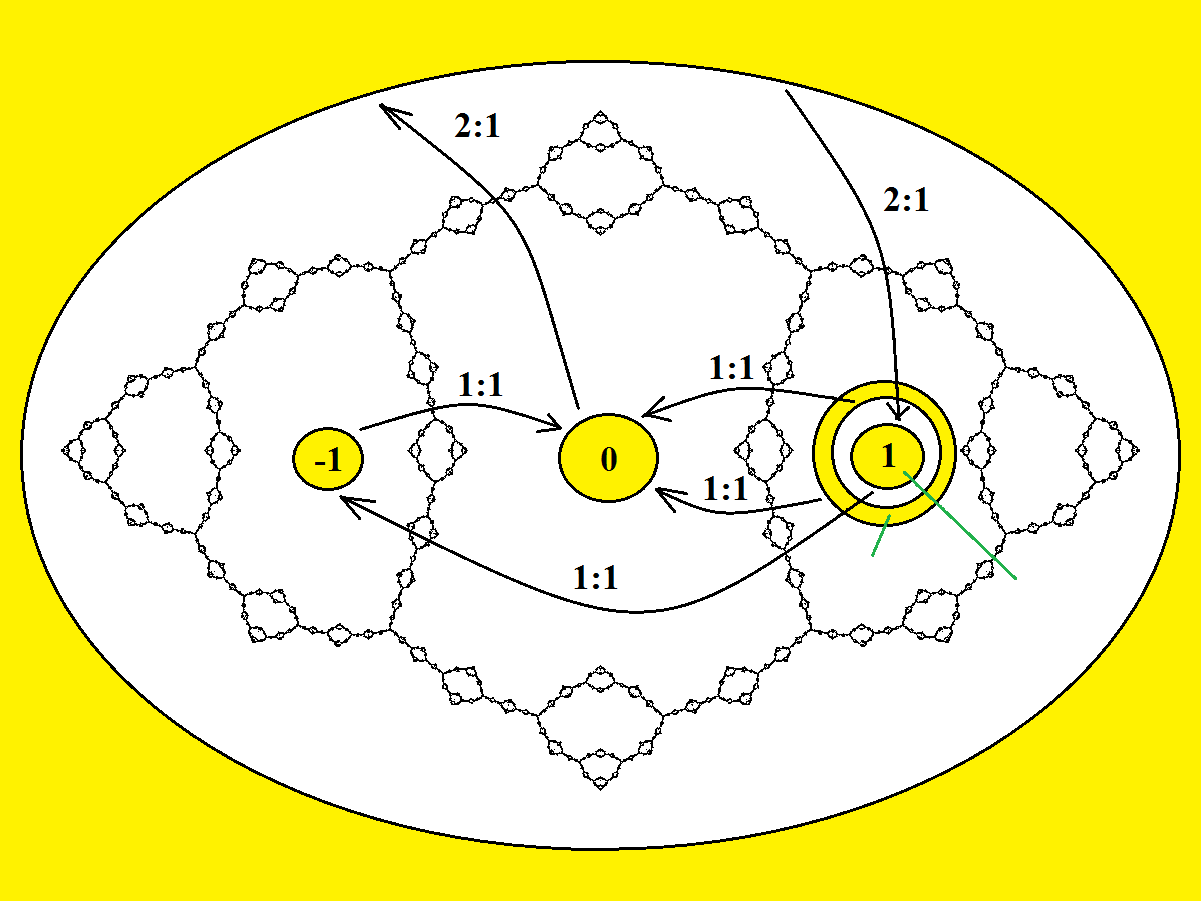}
  \put(-17,30){$U_\nu^1$}
  \put(-35,31){$A_\nu$}
  \put(-65,37){$U_\nu^0$}
  \put(-95,38){$U_\nu^{-1}$}
  \put(-34,83){$U_\nu^{\infty}$}
  \caption{The sketch of the mapping diagram of $f_\nu$, where $\nu\neq 0$ is small, $a=-1$ and $p=3$. Some arrows with mapping degrees between the boundaries of the Fatou components are also marked.}
  \label{Fig_diagram-2}
\end{figure}

\begin{proof}
One can check that $f_\nu$ has a simple critical point $0$. A direct calculation shows that
\begin{equation*}
f_\nu'(z)=-\frac{2a(z-1-\nu)^2+(1+a+\sqrt{-a})\nu z^3}{z^3(z-1-\nu)^2}.
\end{equation*}
Except $0$, to obtain the remaining three critical points $c_1^\nu$, $c_2^\nu$ and $c_\infty^\nu$, it is sufficient to solve the following equation (Now the parameter $a$ is seen as a constant and $1+a+\sqrt{-a}\neq 0$):
\begin{equation}\label{equ:deriv-2}
G(z,\nu):=2a(z-1-\nu)^2+(1+a+\sqrt{-a})\nu z^3=0.
\end{equation}
From \eqref{equ:deriv-2} we know that if $\nu\neq 0$ is small, then one critical point of $f_\nu$ is close to $\infty$ (without loss of generality we assume that it is $c_\infty^\nu$) and the remaining two critical points $c_1^\nu$ and $c_2^\nu$ are close to $1$. By implicit function theorem, if $\nu\neq 0$ is small enough, then $c_\infty^\nu$, $c_j^\nu$ with $j=1,2$ can be written as
\begin{equation*}
c_\infty^\nu=b_\infty\nu^{-1}+o(|\nu|^{-1}) \quad\text{ and }\quad c_j^\nu=1+(-1)^jb_j\nu^{1/2}+o(|\nu|^{1/2}).
\end{equation*}
Substituting the above series to \eqref{equ:deriv-2} and comparing the coefficients, one can obtain:
\begin{equation*}
b_\infty=-\frac{2a}{1+a+\sqrt{-a}}\neq 0 \quad\text{ and }\quad b_j=\sqrt{1/b_\infty}\neq 0\text{\quad for }j=1,2.
\end{equation*}
This implies that
\begin{equation*}
|c_\infty^\nu|\asymp|\nu|^{-1} \quad\text{ and }\quad |c_j^\nu-1|\asymp|\nu|^{1/2}, \text{ where }j=1,2.
\end{equation*}
In particular, $\lim_{\nu\to 0}c_\infty^\nu=\infty$ and $\lim_{\nu\to 0}c_j^\nu=1$ for $j=1,2$.

\medskip

For any $z\in\overline{\D}(1,|\nu|^{1+\varepsilon})$ with $\varepsilon=3/8$, one can write $z=1+\widetilde{z}$ with $|\widetilde{z}|\leq |\nu|^{1+\varepsilon}$ and we have
\begin{equation}\label{equ:est-1}
\begin{split}
&\,|f_\nu(z)+\sqrt{-a}|
=\left|1+\frac{a}{(1+\widetilde{z})^2}+\frac{(1+a+\sqrt{-a})\nu}{\widetilde{z}-\nu}+\sqrt{-a}\right|\\
=&\left|1+a\big(1+O(|\nu|^{1+\varepsilon})\big)-\big(1+a+\sqrt{-a}\big)\big(1+O(|\nu|^{\varepsilon})\big)+\sqrt{-a}\right|
\preceq|\nu|^{\varepsilon}.
\end{split}
\end{equation}
Then
\begin{equation}\label{equ:est-2}
\begin{split}
&\,|f_\nu^{\circ 2}(z)|
=\left|1+\frac{a}{(f_\nu(z))^2}+\frac{(1+a+\sqrt{-a})\nu}{f_\nu(z)-1-\nu}\right|\\
=&\left|1+\frac{a}{\big(-\sqrt{-a}+O(|\nu|^{\varepsilon})\big)^2}+\frac{(1+a+\sqrt{-a})\nu}{-\sqrt{-a}+O(|\nu|^{\varepsilon})-1-\nu}\right|
\preceq|\nu|^{\varepsilon}.
\end{split}
\end{equation}
Since
\begin{equation}\label{equ:est-3}
|f_\nu^{\circ 3}(z)|=\left|1+\frac{a}{(f_\nu^{\circ 2}(z))^2}+\frac{(1+a+\sqrt{-a})\nu}{f_\nu^{\circ 2}(z)-1-\nu}\right|,
\end{equation}
we have $|\nu|^{-2\varepsilon}\preceq|f_\nu^{\circ 3}(z)|$. Then
\begin{equation}\label{equ:est-4}
|f_\nu^{\circ 4}(z)-1|
=\left|\frac{a}{(f_\nu^{\circ 3}(z))^2}+\frac{(1+a+\sqrt{-a})\nu}{f_\nu^{\circ 3}(z)-1-\nu}\right|\preceq|\nu|^{4\varepsilon}.
\end{equation}

Since $|\nu|^{4\varepsilon}\ll|\nu|^{1+\varepsilon}$ if $\nu$ is small enough, it implies that $f_\nu^{\circ 4}$ maps the closed disk $\overline{\D}(1,|\nu|^{1+\varepsilon})$ into its interior. Then $f_\nu$ has an attracting cycle of periodic points with period $4$. Let $U_\nu^1$, $U_\nu^{-1}$, $U_\nu^0$ and $U_\nu^\infty$ be the Fatou components containing $1$, $-\sqrt{-a}$, $0$ and $\infty$ respectively. Then $U_\nu^1$, $U_\nu^{-1}$, $U_\nu^0$ and $U_\nu^\infty$ are pairwise different. Since $\overline{\D}(1,|\nu|^{1+\varepsilon})\subset U_\nu^1$ and $|\nu|^{-2\varepsilon}\ll|\nu|^{-1}$ if $\nu$ is small enough, we have $c_\infty^\nu\in U_\nu^\infty$.

\medskip

For $|\nu|^{9/16}\leq |z-1|\leq |\nu|^{7/16}$, one can write $z=1+\widehat{z}$ with $|\nu|^{9/16}\leq |\widehat{z}|\leq |\nu|^{7/16}$ and we have
\begin{equation*}
|f_\nu(z)-Q_a(1)|
=\left|\frac{a}{(1+\widehat{z})^2}-a+\frac{(1+a+\sqrt{-a})\nu}{\widehat{z}-\nu}\right|
\preceq|\nu|^{7/16}.
\end{equation*}
Similarly and inductively, for $|\nu|^{9/16}\leq |z-1|\leq |\nu|^{7/16}$ and $1\leq k\leq p-2$, we have
\begin{equation}\label{equ:f-nu-1}
|f_\nu^{\circ k}(z)-Q_a^{\circ k}(1)|\preceq|\nu|^{7/16}<|\nu|^{3/8}.
\end{equation}
Since $Q_a^{\circ p}(1)=1$, we have $Q_a^{\circ (p-1)}(1)=\infty$ and $Q_a^{\circ (p-2)}(1)=0$. Then $|f_\nu^{\circ (p-2)}(z)|\preceq|\nu|^{7/16}$ and we have
\begin{equation*}
\left|\frac{1}{f_\nu^{\circ (p-1)}(z)}\right|
=\left|1+\frac{a}{(f_\nu^{\circ (p-2)}(z))^2}+\frac{(1+a+\sqrt{-a})\nu}{f_\nu^{\circ (p-2)}(z)-1-\nu}\right|^{-1}\preceq |\nu|^{7/8}.
\end{equation*}
Therefore, we have
\begin{equation}\label{equ:f-nu-2}
|\nu|^{-7/8}\preceq|f_\nu^{\circ (p-1)}(z)| \text{\quad and\quad}
|f_\nu^{\circ p}(z)-1|\preceq|\nu|^{7/4}.
\end{equation}
Since $|\nu|^{7/4}\ll|\nu|^{11/8}$ and $|\nu|^{9/16}<|c_j^\nu-1|< |\nu|^{7/16}$ for all $j=1,2$ if $\nu$ is small enough, it implies that $f_\nu^{\circ p}(c_j^\nu)\in U_\nu^1$ provided $\nu$ is sufficiently small. Therefore, all the critical points of $f_\nu$ are attracted by an attracting cycle with period $4$ and hence $f_{\nu}$ is hyperbolic.

\medskip

Let $A_\nu$ be the Fatou component containing $\{z:|\nu|^{9/16}\leq |z-1|\leq |\nu|^{7/16}\}$. To conclude the proof, it is sufficient to show that $A_\nu$ is different from any one of $U_\nu^1$, $U_\nu^{-1}$, $U_\nu^0$ and $U_\nu^\infty$.
By applying a completely similar argument as Lemma \ref{holo-motion}, there is a holomorphic motion $h:\Xi\times J(f_0)\rightarrow\mathbb{C}$ parameterized by $\Xi:=\{\nu\in\C:|\nu|<\sigma\}$ with base point $0$, where $\sigma>0$ is a small number. For $\nu\in\Xi\setminus\{0\}$ we define
\begin{equation}\label{equ:h-nu}
h_\nu=h(\nu,\cdot):J(f_0)\to\C.
\end{equation}
Then $J_0(f_\nu):=h_\nu(J(f_0))$ is contained in $J(f_\nu)$ and $h_\nu:J(f_0)\to J_0(f_\nu)$ is a restriction of a quasiconformal homeomorphism. In particular, $J(f_\nu)$ contains a quasiconformal copy of $J(f_0)=J(Q_a)$.

Let  $W_0^k$ be the Fatou components of $f_0:=Q_a$ containing $Q_a^{\circ k}(1)$, where $0\leq k\leq p-1$. Note that we have the following orbit under $Q_a$:
\begin{equation}\label{equ:f-nu-orbit}
\pm\sqrt{-a}\mapsto 0\mapsto \infty\mapsto 1\mapsto 1+a \mapsto\cdots.
\end{equation}
By Lemma \ref{lema:Jordan-domain-1}, each Fatou component of $J(Q_a)$ is a Jordan domain. It follows that $\sqrt{-a}$ and $-\sqrt{-a}$ lie in different Fatou components of $Q_a$ by Riemann-Hurwitz's formula.
Then we have
\begin{equation*}
0\in W_0^{p-2}, \quad \infty\in W_0^{p-1}, \quad 1\in W_0^0, \quad 1+a\in W_0^1,
\end{equation*}
and $\sqrt{-a}\in W_0^{p-3}$ or $-\sqrt{-a}\in W_0^{p-3}$. If $\sqrt{-a}\in W_0^{p-3}$, we use $\widetilde{W}_0^{p-3}$ to denote the Fatou component of $Q_a$ containing $-\sqrt{-a}$. Similarly, If $-\sqrt{-a}\in W_0^{p-3}$, we use $\widetilde{W}_0^{p-3}$ to denote the Fatou component of $Q_a$ containing $\sqrt{-a}$. We have
\begin{equation}\label{equ:W-0-p-3}
W_0^{p-3}\cap \widetilde{W}_0^{p-3}=\emptyset.
\end{equation}

Since $U_\nu^1$, $U_\nu^{-1}$, $U_\nu^0$ and $U_\nu^\infty$ are the Fatou components of $f_\nu$ containing $1$, $-\sqrt{-a}$, $0$ and $\infty$ respectively, we have
\begin{equation}\label{equ:h-nu-W}
U_\nu^0\subset h_\nu(W_0^{p-2}), \quad U_\nu^\infty\subset h_\nu(W_0^{p-1}) \text{\quad and\quad} U_\nu^1\subset h_\nu(W_0^0).
\end{equation}
Moreover, if $p\geq 4$, then $U_\nu^{-1}\subset h_\nu(W_0^{p-3})$ if $Q_a^{\circ (p-3)}(1)=-\sqrt{-a}$; and $U_\nu^{-1}\cap h_\nu(W_0^k)=\emptyset$ for all $0\leq k\leq p-1$ if $Q_a^{\circ (p-3)}(1)=\sqrt{-a}$.
If $p=3$, then $a=-1$ and $U_\nu^{-1}\cap h_\nu(W_0^k)=\emptyset$ for all $0\leq k\leq 2$.
By \eqref{equ:f-nu-1} and \eqref{equ:f-nu-2}, without loss of generality we can assume that $\sigma>0$ is small enough so that
\begin{equation*}
f_\nu^{\circ k}(A_\nu)\subset h_\nu(W_0^k), \text{\quad where\quad}0\leq k\leq p-1.
\end{equation*}

Since $\{h_\nu(W_0^k):0\leq k\leq p-1\}$ are pairwise disjoint, by \eqref{equ:h-nu-W} we know that $A_\nu$ is disjoint with $U_\nu^0$ and $U_\nu^\infty$.
If $p\geq 4$ and $Q_a^{\circ (p-3)}(1)=-\sqrt{-a}$, then $U_\nu^{-1}\subset h_\nu(W_0^{p-3})$ and hence $U_\nu^{-1}\cap h_\nu(W_0^0)=\emptyset$.
If $p\geq 4$ and $Q_a^{\circ (p-3)}(1)=\sqrt{-a}$, then $U_\nu^{-1}\cap h_\nu(W_0^0)=\emptyset$.
If $p=3$, we still have $U_\nu^{-1}\cap h_\nu(W_0^0)=\emptyset$. Therefore, $A_\nu$ is disjoint with $U_\nu^{-1}$ in all cases.

In the following we prove that $A_\nu\cap U_\nu^1=\emptyset$.
Note that we have the following orbit under $f_\nu$:
\begin{equation*}
A_\nu\to f_\nu(A_\nu)\to\cdots\to f_\nu^{\circ(p-2)}(A_\nu)=U_\nu^0\to
U_\nu^\infty\to U_\nu^1\to U_\nu^{-1}\to U_\nu^0.
\end{equation*}
Suppose that $A_\nu\cap U_\nu^1\neq\emptyset$, equivalently, $A_\nu=U_\nu^1$ is the Fatou component contained in $h_\nu(W_0^0)$. Then we have $f_\nu^{\circ 2}(A_\nu)\subset h_\nu(W_0^2)$ and $f_\nu^{\circ 2}(U_\nu^1)=U_\nu^0\subset h_\nu(W_0^{p-2})$. Since $\{h_\nu(W_0^k):0\leq k\leq p-1\}$ are pairwise disjoint, we conclude that $p=4$. By \eqref{equ:f-nu-orbit} we have $1+a=\sqrt{-a}$ since $1+a\neq -\sqrt{-a}$. Then $Q_a(1)=1+a=\sqrt{-a}\in f_\nu(A_\nu)\subset h_\nu(W_0^1)$ by \eqref{equ:f-nu-1} and $-\sqrt{-a}\in f_\nu(U_\nu^1)=U_\nu^{-1}\subset h_\nu(\widetilde{W}_0^1)$. According to \eqref{equ:W-0-p-3}, we have $f_\nu(A_\nu)\cap f_\nu(U_\nu^1)=\emptyset$, which contradicts to the assumption that $A_\nu=U_\nu^1$. Therefore, we have $A_\nu\cap U_\nu^1=\emptyset$.
\end{proof}

\begin{lema}\label{lema:Jordan-domain}
There exists $\sigma'\in(0,\sigma]$ such that if $0<|\nu|<\sigma'$, then the Fatou components $U_\nu^1$, $U_\nu^{-1}$, $U_\nu^0$ and $U_\nu^\infty$ are Jordan domains.
\end{lema}

\begin{proof}
By Lemma \ref{lema:hyper-2}, $U_\nu^1$, $U_\nu^{-1}$, $U_\nu^0$ and $U_\nu^\infty$ form an attracting cycle of Fatou components with period $4$. To prove that they are Jordan domains, the idea is to construct a polynomial-like mapping $f_\nu^{\circ 4}:\Omega_1\to \Omega_2$ and prove that $(f_\nu^{\circ 4},\Omega_1,\Omega_2)$ is conjugated to a quartic polynomial whose Julia set is a Jordan curve.

To this end, we define $\Omega_1':=\D(1,|\nu|^{1+\tau})$ with $\tau=1/4$. By applying a similar calculation from \eqref{equ:est-1} to \eqref{equ:est-4}, if $|z-1|=|\nu|^{1+\tau}$ (i.e. $z\in\partial\Omega_1'$), then we have
\begin{equation*}
\begin{split}
|f_\nu(z)+\sqrt{-a}|\asymp |\nu|^\tau,
&\quad |f_\nu^{\circ 2}(z)|\asymp|\nu|^\tau \text{ and} \\
|f_\nu^{\circ 3}(z)|\asymp|\nu|^{-2\tau},
&\quad |f_\nu^{\circ 4}(z)-1|\asymp|\nu|^{4\tau}.
\end{split}
\end{equation*}
Since $|\nu|^{1+\tau}\ll |\nu|^{4\tau}=|\nu|$, there exists a constant $C>0$ such that $\overline{\Omega}_1'\subset \Omega_2:=\D(1,C|\nu|)\subset f_\nu^{\circ 4}(\Omega_1')$. According to Lemma \ref{lema:hyper-2}, the connected component of $(f_\nu^{\circ 2})^{-1}(\Omega_2)$ containing $0$, and the connected component of $(f_\nu^{\circ 3})^{-1}(\Omega_2)$ containing $\infty$, contain the critical point $0$ and $c_\nu^\infty$ respectively. Let $\Omega_1$ be the connected component of $(f_\nu^{\circ 4})^{-1}(\Omega_2)$ containing $1$. Then $(f_\nu^{\circ 4},\Omega_1,\Omega_2)$ is a polynomial-like mapping with degree $4$. According to \cite[p.\,296]{DH85b}, $(f_\nu^{\circ 4},\Omega_1,\Omega_2)$ is quasiconformally conjugate to a quartic polynomial $P$. By quasiconformal surgery, one can obtain another polynomial $\widetilde{P}$ so that $\widetilde{P}$ is critically finite and $P$, $\widetilde{P}$ are quasiconformally conjugate in an open neighborhood of the corresponding Julia sets. Obviously, $\widetilde{P}$ is conjugate to $z\mapsto z^4$ and hence $J(P)$, $J(\widetilde{P})$ are Jordan curves.
This implies that $\partial U_\nu^\infty$ is a Jordan curve and hence $U_\nu^1$, $U_\nu^{-1}$, $U_\nu^0$ are all Jordan domains.
\end{proof}

Let $A$ be a bounded annulus in $\C$. We use $\partial^+A$ and $\partial^-A$, respectively, to denote the external and internal boundaries of $A$, so that $\partial^-A$ is contained in a bounded component of $\C\setminus\partial^+A$.

\begin{proof}[{Proof of Theorem \ref{thm-exam-2}}]
We only give a sketch here since the proof is almost a copy of that of Proposition \ref{prop-homeo-11}.
By Lemma \ref{lema:hyper-2}, the orbit $U_\nu^1\to U_\nu^{-1}\to U_\nu^0\to U_\nu^\infty\to U_\nu^1$ forms an attracting cycle of Fatou components and it contains exactly two critical points $0\in U_\nu^0$ and $c_\nu^\infty\in U_\nu^\infty$. By Lemma \ref{lema:Jordan-domain}, $U_\nu^1$, $U_\nu^{-1}$, $U_\nu^0$ and $U_\nu^\infty$ are all Jordan domains and hence they are simply connected. Since $A_\nu$ contains exactly two critical points, it follows that $A_\nu$ is an annulus by applying Riemann-Hurwitz's formula to $f_\nu^{\circ (p-2)}: A_\nu\to U_\nu^0$.

\medskip

By Lemma \ref{lema:Jordan-domain-1}, $\partial U_0^1$ is a Jordan curve and hence $h_\nu(\partial U_0^1)$ is also. By Lemma \ref{lema:Jordan-domain}, $\partial ^+ A_\nu$ and $\partial ^- A_\nu$ are Jordan curves. We denote the following three annuli
\begin{equation*}
V_1=A(\partial^+A_\nu,h_\nu(\partial U_0^1)),~
V_2=A(\partial^-A_\nu,\partial U_\nu^1) \text{ and }
V=A(\partial U_\nu^1,h_\nu(\partial U_0^1)).
\end{equation*}
Note that $f_\nu^{\circ p}:V_1\to V$ and $f_\nu^{\circ p}:V_2\to V$ are both covering maps with degree $4$. Similar to the proof of Proposition \ref{prop-homeo-11}, one can show that $h_\nu(\partial U_0^1)$ is semi-buried from $h_\nu(U_0^1)$ first. Taking preimages of $h_\nu(U_0^1)$ under $f_\nu$, one can show that $J_0(f_\nu)=h_\nu(J(f_0))$ is a fully buried Julia component of $f_\nu$. Then the existence of infinitely many buried Julia components which are Jordan curves is completely similar to the proof of Proposition \ref{prop-homeo-11}. %We leave the details to the reader.
\end{proof}

\begin{rmk}
In fact, to prove Theorem \ref{thm-exam-2}, Lemma \ref{lema:Jordan-domain} is not necessary. Even if we don't know $\partial U_0^1$, $\partial ^+ A_\nu$ and $\partial ^- A_\nu$ are Jordan curves, one can still use \cite[Lemma 2.4 (Case 2)]{PT00} and \cite[Proposition (Case 2)]{PT00} to conclude the same result. However, in this case it will be a little complicated to mark the annuli $V_1$, $V_2$ and $V$. Anyway based on Lemma \ref{lema:Jordan-domain} we can understand the topological structures of the Julia components of $f_\nu$ better.
\end{rmk}

%----------------------------------------------------------------------------------------------------------------
\bibliographystyle{amsalpha}
\bibliography{E:/Latex-model/Ref1}

\providecommand{\bysame}{\leavevmode\hbox to3em{\hrulefill}\thinspace}
\providecommand{\MR}{\relax\ifhmode\unskip\space\fi MR }
% \MRhref is called by the amsart/book/proc definition of \MR.
\providecommand{\MRhref}[2]{%
  \href{http://www.ams.org/mathscinet-getitem?mr=#1}{#2}
}
\providecommand{\href}[2]{#2}
\begin{thebibliography}{CMMR09}

\bibitem[BB99]{BB99}
R.~Bam{\'{o}}n and J.~Bobenrieth, \emph{The rational maps {$z\mapsto 1+1/\omega
  z^d$} have no {H}erman rings}, Proc. Amer. Math. Soc. \textbf{127} (1999),
  no.~2, 633--636.

\bibitem[BDGR08]{BDGR08}
P.~Blanchard, R.~L. Devaney, A.~Garijo, and E.~D. Russell, \emph{A generalized
  version of the {M}c{M}ullen domain}, Internat. J. Bifur. Chaos Appl. Sci.
  Engrg. \textbf{18} (2008), no.~8, 2309--2318.

\bibitem[Bea91a]{Bea91a}
A.~F. Beardon, \emph{The components of a {J}ulia set}, Ann. Acad. Sci. Fenn.
  Ser. A I Math. \textbf{16} (1991), no.~1, 173--177.

\bibitem[Bea91b]{Bea91b}
\bysame, \emph{Iteration of rational functions}, Graduate Texts in Mathematics,
  vol. 132, Springer-Verlag, New York, 1991.

\bibitem[BF14]{BF14}
B.~Branner and N.~Fagella, \emph{Quasiconformal surgery in holomorphic
  dynamics}, Cambridge Studies in Advanced Mathematics, vol. 141, Cambridge
  University Press, Cambridge, 2014.

\bibitem[CG93]{CG93}
L.~Carleson and T.~W. Gamelin, \emph{Complex dynamics}, Universitext: Tracts in
  Mathematics, Springer-Verlag, New York, 1993.

\bibitem[CMMR09]{CMMR09}
C.~P. Curry, J.~C. Mayer, J.~Meddaugh, and J.~T. Rogers, \emph{Any
  counterexample to {M}akienko's conjecture is an indecomposable continuum},
  Ergodic Theory Dynam. Systems \textbf{29} (2009), no.~3, 875--883.

\bibitem[CMT13]{CMT13}
C.~P. Curry, J.~C. Mayer, and E.~D. Tymchatyn, \emph{Topology and measure of
  buried points in {J}ulia sets}, Fund. Math. \textbf{222} (2013), no.~1,
  1--17.

\bibitem[DG08]{DG08}
R.~L. Devaney and A.~Garijo, \emph{Julia sets converging to the unit disk},
  Proc. Amer. Math. Soc. \textbf{136} (2008), no.~3, 981--988.

\bibitem[DH85]{DH85b}
A.~Douady and J.~H. Hubbard, \emph{On the dynamics of polynomial-like
  mappings}, Ann. Sci. \'{E}cole Norm. Sup. (4) \textbf{18} (1985), no.~2,
  287--343.

\bibitem[DLU05]{DLU05}
R.~L. Devaney, D.~M. Look, and D.~Uminsky, \emph{The escape trichotomy for
  singularly perturbed rational maps}, Indiana Univ. Math. J. \textbf{54}
  (2005), no.~6, 1621--1634.

\bibitem[Dur83]{Dur83}
P.~L. Duren, \emph{Univalent functions}, Grundlehren der Mathematischen
  Wissenschaften, vol. 259, Springer-Verlag, New York, 1983.

\bibitem[FY15]{FY15}
J.~Fu and F.~Yang, \emph{On the dynamics of a family of singularly perturbed
  rational maps}, J. Math. Anal. Appl. \textbf{424} (2015), no.~1, 104--121.

\bibitem[GMR13]{GMR13}
A.~Garijo, S.~M. Marotta, and E.~D. Russell, \emph{Singular perturbations in
  the quadratic family with multiple poles}, J. Difference Equ. Appl.
  \textbf{19} (2013), no.~1, 124--145.

\bibitem[God15]{God15}
S.~Godillon, \emph{A family of rational maps with buried {J}ulia components},
  Ergodic Theory Dynam. Systems \textbf{35} (2015), no.~6, 1846--1879.

\bibitem[HP12]{HP12}
P.~Ha{\"{\i}}ssinsky and K.~M. Pilgrim, \emph{Quasisymmetrically inequivalent
  hyperbolic {J}ulia sets}, Rev. Mat. Iberoam. \textbf{28} (2012), no.~4,
  1025--1034.

\bibitem[Lyu83]{Lyu83}
M.~Lyubich, \emph{Some typical properties of the dynamics of rational
  mappings}, Uspekhi Mat. Nauk \textbf{38} (1983), no.~5 (233), 197--198.

\bibitem[Lyu86]{Lyu86}
M.~Yu. Lyubich, \emph{Dynamics of rational transformations: topological
  picture}, Uspekhi Mat. Nauk \textbf{41} (1986), no.~4 (250), 35--95.

\bibitem[Mak87]{Mak87}
P.~M. Makienko, \emph{Iterations of analytic functions in {${\bf C}^*$}}, Dokl.
  Akad. Nauk SSSR \textbf{297} (1987), no.~1, 35--37.

\bibitem[Mar08]{Mar08}
S.~M. Marotta, \emph{Singular perturbations in the quadratic family}, J.
  Difference Equ. Appl. \textbf{14} (2008), no.~6, 581--595.

\bibitem[McM88]{McM88}
C.~T. McMullen, \emph{Automorphisms of rational maps}, Holomorphic functions
  and moduli, {V}ol. {I} ({B}erkeley, {CA}, 1986), Math. Sci. Res. Inst. Publ.,
  vol.~10, Springer, New York, 1988, pp.~31--60.

\bibitem[McM94]{McM94b}
\bysame, \emph{Complex dynamics and renormalization}, Annals of Mathematics
  Studies, vol. 135, Princeton University Press, Princeton, NJ, 1994.

\bibitem[Mil93]{Mil93}
J.~Milnor, \emph{Geometry and dynamics of quadratic rational maps}, Experiment.
  Math. \textbf{2} (1993), no.~1, 37--83, With an appendix by the author and
  Lei Tan.

\bibitem[Mil06]{Mil06}
\bysame, \emph{Dynamics in one complex variable}, third ed., Annals of
  Mathematics Studies, vol. 160, Princeton University Press, Princeton, NJ,
  2006.

\bibitem[Mor97]{Mor97}
S.~Morosawa, \emph{On the residual {J}ulia sets of rational functions}, Ergodic
  Theory Dynam. Systems \textbf{17} (1997), no.~1, 205--210.

\bibitem[Mor00]{Mor00}
\bysame, \emph{Julia sets of subhyperbolic rational functions}, Complex
  Variables Theory Appl. \textbf{41} (2000), no.~2, 151--162.

\bibitem[MSS83]{MSS83}
R.~Ma{\~{n}}{\'{e}}, P.~Sad, and D.~Sullivan, \emph{On the dynamics of rational
  maps}, Ann. Sci. \'{E}cole Norm. Sup. (4) \textbf{16} (1983), no.~2,
  193--217.

\bibitem[Na{\u{\i}}82]{Nai82}
V.~A. Na{\u{\i}}shul', \emph{Topological invariants of analytic and
  area-preserving mappings and their application to analytic differential
  equations in {${\bf C}^{2}$} and {${\bf C}P^{2}$}}, Trudy Moskov. Mat.
  Obshch. \textbf{44} (1982), 235--245.

\bibitem[Pil96]{Pil96}
K.~M. Pilgrim, \emph{Rational maps whose {F}atou components are {J}ordan
  domains}, Ergodic Theory Dynam. Systems \textbf{16} (1996), no.~6,
  1323--1343.

\bibitem[PM97]{PM97}
R.~P{\'{e}}rez-Marco, \emph{Fixed points and circle maps}, Acta Math.
  \textbf{179} (1997), no.~2, 243--294.

\bibitem[Pom75]{Pom75}
C.~Pommerenke, \emph{Univalent functions}, Vandenhoeck \& Ruprecht,
  G\"{o}ttingen, 1975.

\bibitem[PT99]{PT99}
K.~M. Pilgrim and L.~Tan, \emph{On disc-annulus surgery of rational maps},
  Dynamical systems, {P}roceedings of the {I}nternational {C}onference in
  {H}onor of {P}rofessor {S}hantao {L}iao, World Scientific, Singapore, 1999,
  pp.~237--250.

\bibitem[PT00]{PT00}
\bysame, \emph{Rational maps with disconnected {J}ulia set}, Ast\'{e}risque
  (2000), no.~261, xiv, 349--384, G\'{e}om\'{e}trie complexe et syst\`emes
  dynamiques (Orsay, 1995).

\bibitem[Qia95]{Qia95}
J.~Qiao, \emph{The buried points on the {J}ulia sets of rational and entire
  functions}, Sci. China Ser. A \textbf{38} (1995), no.~12, 1409--1419.

\bibitem[Qia97]{Qia97}
\bysame, \emph{Topological complexity of {J}ulia sets}, Sci. China Ser. A
  \textbf{40} (1997), no.~11, 1158--1165.

\bibitem[QYY15]{QYY15}
W.~Qiu, F.~Yang, and Y.~Yin, \emph{Rational maps whose {J}ulia sets are
  {C}antor circles}, Ergodic Theory Dynam. Systems \textbf{35} (2015), no.~2,
  499--529.

\bibitem[Shi87]{Shi87}
M.~Shishikura, \emph{On the quasiconformal surgery of rational functions}, Ann.
  Sci. \'{E}cole Norm. Sup. (4) \textbf{20} (1987), no.~1, 1--29.

\bibitem[SY03]{SY03}
Y.~Sun and C.~C. Yang, \emph{Buried points and {L}akes of {W}ada continua},
  Discrete Contin. Dyn. Syst. \textbf{9} (2003), no.~2, 379--382.

\bibitem[XQY14]{XQY14}
Y.~Xiao, W.~Qiu, and Y.~Yin, \emph{On the dynamics of generalized {M}c{M}ullen
  maps}, Ergodic Theory Dynam. Systems \textbf{34} (2014), no.~6, 2093--2112.

\bibitem[Yan17]{Yan17}
F.~Yang, \emph{Rational maps without {H}erman rings}, Proc. Amer. Math. Soc.
  \textbf{145} (2017), no.~4, 1649--1659.

\bibitem[Yin92]{Yin92}
Y.~Yin, \emph{On the {J}ulia sets of quadratic rational maps}, Complex
  Variables Theory Appl. \textbf{18} (1992), no.~3-4, 141--147.

\end{thebibliography}

\end{document}